\numberwithin{equation}{section}
\providecommand{\U}[1]{\protect\rule{.1in}{.1in}}
\providecommand{\U}[1]{\protect \rule{.1in}{.1in}}
\newtheorem{theorem}{Theorem}[section]
\newtheorem{corollary}[theorem]{Corollary}
\newtheorem{definition}[theorem]{Definition}
\newtheorem{lemma}[theorem]{Lemma}
\newtheorem{proposition}[theorem]{Proposition}
\newtheorem{remark}[theorem]{Remark}
\newtheorem{assumption}[theorem]{Assumption}
\newenvironment{proof}[1][Proof]{\noindent \textbf{#1.} }{\  \rule{0.5em}{0.5em}}
\DeclareMathOperator*{\esssup}{ess\,sup}
\DeclareMathOperator*{\essinf}{ess\,inf}
\def \E{\mathsf{E}}
\def \P{\mathsf{P}}
\def \E{\mathsf{E}}
\def \P{\mathsf{P}}
\begin{document}
	\title{Conditional Reflected Backward Stochastic Differential Equations with Two Barriers}
	\author{ 	Hanwu Li\thanks{Research Center for Mathematics and Interdisciplinary Sciences, Shandong University, Qingdao 266237, Shandong, China. lihanwu@sdu.edu.cn.}
	\thanks{Frontiers Science Center for Nonlinear Expectations (Ministry of Education), Shandong University, Qingdao 266237, Shandong, China.}}
	%\and Shige Peng\thanks{School of Mathematics and Qilu Institute of Finance, Shandong University,
	%peng@sdu.edu.cn. Li and Peng's research was
	%partially supported by the Tian Yuan Projection of the National Nature Sciences Foundation of China (No. 11526205 and No. 11626247)  and by the 111
	%Project (No. B12023).}}
	\date{}
	\maketitle
	
	\begin{abstract}
        In this paper, we study the doubly conditional  reflected backward stochastic differential equations (BSDEs), where  constraints are made on the conditional expectation of the first component of the solution with respect to a general subfiltration. With the help of the Skorokhod problem on a time-dependent interval and the Dynkin game in a general framework, we establish the existence and uniqueness result under the Mokobodski condition for the obstacles. The relation between the conditional expectation of the solution and the value function of a certain Dynkin game with partial information is obtained. As a by-product, we obtain a weaker version of the comparison theorem. Finally, we provide an application to the starting and stopping problem in reversible investments under partial information. %Based on the backward Skorokhod problem with nonlinear constraints, we obtain the existence and uniqueness result by constructing a contraction mapping. When the constraints are linear, the solution can be approximated by a family of penalized mean-field BSDEs.  
	\end{abstract}

    \textbf{Key words}: backward stochastic differential equations, conditional reflections, Dynkin game,  partial information, starting and stopping problem

    \textbf{MSC-classification}: 60H10
	
\section{Introduction}

Reflected BSDEs were introduced by El Karoui et al. in \cite{EKPPQ} taking the following form
\begin{align}\label{intro1}
\begin{cases}
Y_t=\xi+\int_t^T f(s,Y_s,Z_s)ds-\int_t^T Z_s dB_s+(K_T-K_t),\\
Y_t\geq S_t, \ t\in[0,T],\\
K_0=0, \ K \textrm{ is a nondecreasing process such that } \int_0^T (Y_s-S_s)dK_s=0.
\end{cases}
\end{align}
In \cite{EKPPQ}, the authors establish the well-posedness of the solution $(Y,Z,K)$ to the above equation as well as its connection to the optimal stopping problems and related partial differential equations (PDEs) with obstacles. When the solution $Y$ is forced to stay between two prescribed obstacles $L,U$, the equation is called the doubly reflected BSDE first investigated by Cvitani\'{c} and Karatzas in \cite{CK}. In order to obtain the square integrable solutions to the doubly reflected BSDEs, the obstacles $L,U$ are assumed to satisfy the so-called Mokobodski condition, i.e., we may place the discrepancy of two nonnegative supermartingales between two obstacles (see, e.g., \cite{CK,HL,PX}). Due to the importance in both theoretical analysis and practical applications, many extension works on doubly reflected BDSEs have been studied. To name a few, we refer to the papers \cite{CM,DQS,EH,FS20,GIOQ,HH,HHO,K2021} and the references therein. %The doubly reflected BSDE is closely related to the Dynkin games and the PDEs with two obstacles (see, e.g., \cite{DQS,GIOQ,HH,HL,K2021}). 

In 2018, Briand, Elie and Hu \cite{BEH} proposed a new kind of constrained BSDEs, called mean reflected BSDEs, where the restriction for $Y$ is not given on its path but on the distribution, which is written as follows
\begin{align*}
\E[l(t,Y_t)]\geq 0, \ t\in[0,T].
\end{align*}
Here, $l$ is a given bi-Lipshcitz loss function. To ensure the uniqueness of the solution, different from the classical reflected BSDEs, the process $K$ should be deterministic and satisfies the Skorokhod condition 
\begin{align*}
    \int_0^T\E[l(t,Y_t)]dK_t=0.
\end{align*}
The mean reflected BSDE gives the superhedging price for a contingent claim under a running risk manangement constraint. Since then, the mean reflected problem has attracted considerable attention. We may refer to \cite{BCGL,FS21} for mean reflected SDEs, \cite{BH} for particles systems, \cite{DEH} for the mean-field case, \cite{CZ,HHLLW} for the case of non-Lipschitz driver, \cite{LW} for the $G$-expectation framework, \cite{QW} for the multi-dimensional case and \cite{FS,L} for the case of double mean reflections. %. Recently, Falkowski and S\l omi\'{n}ski \cite{FS} and Li \cite{L} considered the case of mean reflection with two constraints.

Recently, motivated by the pricing for American options with partial information and  the recursive reflected utility maximization problem with partial information, Hu, Huang and Li \cite{HHL} introduced the so-called conditional reflected BSDEs, where the reflection barrier is defined via a conditional expectation operator on a general subfiltration $\mathbb{G}=\{\mathcal{G}_t\}_{t\in [0,T]}$. More precisely, the constraint is given as 
\begin{align*}
    \E[Y_t-S_t|\mathcal{G}_t]\geq 0.
\end{align*}
It is worth pointing out that the partial information features in mathematical finance have been extensively investigated (e.g., \cite{BDL,Lindensjo,SH,ZXZ}). Clearly, the classical reflected BSDE and the mean reflected BSDE (with linear loss function $l$) serve as two special examples of conditional reflected BSDE. The first  corresponds to the case that the subfiltration turns into the filtration generated by Brownian motion $B$ and the latter coincides with the case that the subfiltration degenerates into the deterministic scenario. 

In the present paper, we consider the conditional reflected BSDEs with two barriers, that is, the first component $Y$ of the solution is subject to a constraint taking the following form
\begin{align*}
    \E[L_t|\mathcal{G}_t]\leq \E[Y_t|\mathcal{G}_t]\leq \E[U_t|\mathcal{G}_t].
\end{align*}
To fulfill this condition, the term $K$ should consist of the forces aiming to push the solution upward (denoted by $K^+$) and to pull the solution downward (denoted by $K^-$). Moreover, both forces need to behave in a minimal way such that the following Skorokhod condition are satisfied
\begin{align*}
    \int_0^T \E[Y_t-L_t|\mathcal{G}_t]dK_t^+=\int_0^T \E[U_t-Y_t|\mathcal{G}_t]dK^-_t=0.
\end{align*}
In order to guaranty the uniqueness of the solution, $K^+$ and $K^-$ are required to be $\mathbb{G}$-adpated as explained in \cite{BEH}. Similar with the case of single conditional reflection, the doubly conditional  reflected BSDEs subsume classical doubly reflected BSDE and the doubly mean reflected BSDE (with linear loss function) as its two special and extreme cases. Moreover, if the upper obstacle $U\equiv +\infty$, the doubly conditional reflected BSDE degenerates into the conditional reflected BSDE studied in \cite{HHL}. 

It should be pointed out that the proof for the well-posedness of doubly mean reflected BSDEs heavily relies on the fact that $K$ is deterministic. Therefore, the techniques using the backward Skorokhod problem are no longer valid for the conditional reflected case. In order to prove the uniqueness, a key point is to establish the representation for $K$ (i.e., Proposition \ref{repK}) using the Skorokhod problem on time-dependent interval (see \cite{BKR,Slaby}), which will help to obtain certain a priori estimates for doubly conditional reflected BSDEs. The existence can be obtained by a contraction mapping argument. The building block is the construction of the solution to the case that the driver $f$ does not depend on $(y,z)$, which can be established by using the Dynkin game and the optimal stopping time problem with partial information (see \cite{KQ,KQdC}). Another frequently used method to construct the solutions to (mean) reflected BSDEs is approximation via penalization. For the doubly conditional reflected case, this method is still valid. It is worth pointing out that the approximation sequence is a family of penalized conditional expectation BSDEs (see \cite{Li}). Due to the weak constraint condition, the penalization method needs some additional assumptions for the driver $f$ and the obstacles $L,U$. The advantage of this construction is that it does not need any continuity assumption for subfiltration $\mathbb{G}$.

One of the most important properties for reflected BSDEs is the comparison theorem. A natural question is that if the pointwise comparison property still holds for the conditional reflected BSDEs. Unfortunately, since the constraint is made on conditional expectation but not pointwise, the comparison theorem only holds under some typical structure of the parameters (see  Example 3.2 in \cite{HHLLW} as a counterexample for the general case).  Motivated by Corollary 3.3 in \cite{HHL}, we compare the conditional expectation of the solution with respect to the partial information. To this end, we first establish the connection between the conditional expectation of the solution and the value function of a certain Dynkin game with partial information. Then, the comparison property is equivalent to the comparison for the reward processes of the associated Dynkin games.  %Although Proposition 4.6 in \cite{L} gives the pointwise comparison property for doubly mean reflected BSDEs when the driver is deterministically and linearly dependent on $y$, the arguments are no longer workable in the current framework since $K$ is not deterministic. 

As one application of the doubly conditional reflected BSDEs, we consider the starting and stopping problem in reversible investment under partial information $\mathbb{G}$. Roughly speaking, we aim to find a sequence of $\mathbb{G}$-stopping times  where the agent should decide when to stop the production and to resume it successively in order to maximize the overall profit. One important observation is that the solution to a doubly conditional reflected BSDEs can be represented as the difference between two solutions to certain conditional reflected BSDEs. We show that one of the solutions is indeed the value function of the starting and stopping problem. The optimal strategy can be constructed by the interpretation of the solution to conditional reflected BSDEs.

This paper is organized as follows. We first introduce some preliminaries about Dynkin games in a general framework in Section 2. In Section 3, we formulate the conditional doubly reflected BSDEs in details and establish the existence and uniqueness result. The properties of the solutions to conditional doubly reflected BSDEs are given in Section 4. In the last section, we give some application of doubly conditional reflected BSDE in the reversible investment problem. In the Appendix, we construct the solution by a penalization method. %we recall some basic results about deterministic Skorokhod problem and then extend it to the backward case. The existence and uniqueness result and some properties of BSDE with double mean reflections are studied in Section 4. In the last section, when the reflecting boundaries are in a linear fashion, we construct the solution by a penalization method.

\section{Dynkin games in a general framework}

In this section, we first recall some basic notations and results about  Dynkin games in a general framework (see \cite{KQdC}). Let $(\Omega,\mathcal{F},\mathbb{F},\P)$ be a probability space equipped with a filtration $\mathbb{F}=\{\mathcal{F}_t\}_{t\in[0,T]}$ satisfying the usual conditions of right continuity and completeness. Let $\mathcal{T}^\mathbb{F}$ be the collections of all $\mathbb{F}$-stopping times taking values in $[0,T]$. For any $S,S'\in\mathcal{T}^\mathbb{F}$ with $S\leq S'$ a.s., we denote by $\mathcal{T}^{\mathbb{F}}_S$ (resp., $\mathcal{T}^\mathbb{F}_{S,S'}$) the class of stopping times $\tau\in\mathcal{T}^\mathbb{F}$ with $\tau\geq S$ a.s. (resp., $S\leq \tau\leq S'$ a.s.).  We always omit the superscript $\mathbb{F}$ if the filtration is obvious. 

\begin{definition}
    A family of $\bar{\mathbb{R}}$-valued random variables $\{\phi(\theta),\theta\in\mathcal{T}\}$ is said to be admissible if it satisfies the following conditions:
    \begin{itemize}
        \item[(1)] for all $\theta\in\mathcal{T}$, $\phi(\theta)$ is an $\mathcal{F}_\theta$-measurable random variable,
        \item[(2)] for all $\theta,\theta'\in\mathcal{T}$, $\phi(\theta)=\phi(\theta')$ a.s. on $\{\theta=\theta'\}$.
    \end{itemize}
    The set of admissible families is denoted by $\mathcal{A}$. A family $\phi\in\mathcal{A}$ is said to be integrable if, for each $\theta\in\mathcal{T}$, $\phi(\theta)$ is integrable.
\end{definition}

For $\phi,\phi'\in\mathcal{A}$, we write $\phi\leq \phi'$ if,  for each $\theta\in\mathcal{T}$, $\phi(\theta)\leq \phi'(\theta)$ a.s. For $\phi\in\mathcal{A}$, $\phi^+$ (resp., $\phi^-$) denotes the family $\{(\phi(\theta))^+,\theta\in\mathcal{T}\}$ (resp., $\{(\phi(\theta))^-,\theta\in\mathcal{T}\}$). We define the following subsets of $\mathcal{A}$:
\begin{align*}
    &\mathcal{S}=\left\{\phi\in\mathcal{A}:\E\left[\esssup_{\theta\in\mathcal{T}}|\phi(\theta)|\right]<\infty\right\},\\
    \mathcal{S}^+=&\{\phi\in\mathcal{A}:\phi^+\in\mathcal{S}\}, \ \  \ \ \ \mathcal{S}^-=\{\phi\in\mathcal{A}:\phi^-\in\mathcal{S}\}.
\end{align*}

\begin{definition}
    An admissible family $\phi$, such that $\phi^-$ is integrable,  is said to be a supermartingale family (resp., martingale family) if for any $\theta,\theta'\in\mathcal{T}$ such that $\theta\geq \theta'$ a.s.,
    \begin{align*}
        \E[\phi(\theta)|\mathcal{F}_{\theta'}]\leq \phi(\theta') \ \textrm{a.s.} \ (\textrm{resp., } \E[\phi(\theta)|\mathcal{F}_{\theta'}]= \phi(\theta') \ \textrm{a.s.}).
    \end{align*}
\end{definition}

\begin{definition}
    For each $\phi\in\mathcal{A}$, the smallest supermartingale family which is no smaller than $\phi$ is called the Snell envelope family of $\phi$, and is denoted by $\mathcal{R}(\phi)$.
\end{definition}

\begin{remark}
    Let $\phi\in\mathcal{S}^-$ be the reward family. For each $\theta\in\mathcal{T}$, the value function of the optimal stopping problem at time $\theta$ is defined by 
    \begin{align*}
v^\phi(\theta):=\esssup_{\tau\in\mathcal{T}_\theta}\E[\phi(\tau)|\mathcal{F}_\theta].
    \end{align*}
    It is shown in \cite{KQ} that the value function family $v$ associated with $\phi$ is the smallest supermartingale family which dominated $\phi$. That is, if $\phi\in\mathcal{S}^-$, we have $\mathcal{R}(\phi)=v^\phi$.
\end{remark}

In the following of this section, let $\xi,\zeta$ be two integrable families such that $\xi\in\mathcal{S}^-$, $\zeta\in\mathcal{S}^+$ and $\xi(T)=\zeta(T)=0$ a.s. For each $\theta\in\mathcal{T}$, the lower value function $\underline{V}$ and the upper value function $\overline{V}$ of the Dynkin game at time $\theta$ are defined by
\begin{align*}
    &\underline{V}(\theta):=\esssup_{\tau\in\mathcal{T}_\theta}\essinf_{\sigma\in\mathcal{T}_\theta}I_\theta(\tau,\sigma),\\
    &\overline{V}(\theta):=\essinf_{\sigma\in\mathcal{T}_\theta}\esssup_{\tau\in\mathcal{T}_\theta}I_\theta(\tau,\sigma),
\end{align*}
where the criterion $I_\theta(\tau,\sigma)$ at time $\theta$ for a strategy $(\tau,\sigma)$ is defined by
\begin{align*}
    I_\theta(\tau,\sigma):=\E[\xi(\tau) I_{\{\tau\leq \sigma\}}+\zeta(\sigma) I_{\{\sigma<\tau\}}|\mathcal{F}_\theta].
\end{align*}

\begin{remark}
    Since the criterion does not depend on the terminal value $\zeta(T)$, it is not restrictive to assume that $\xi(T)=\zeta(T)$. Moreover, the assumption $\xi(T)=\zeta(T)=0$ is no more restrictive. Actually, give two integrable families $\xi,\zeta$ with $\xi(T)=\zeta(T)$, we define the integrable families $\xi',\zeta'$ as follows
    \begin{align*}
        \xi'(\theta):=\xi(\theta)-\E[\xi(T)|\mathcal{F}_\theta], \ \zeta'(\theta):=\zeta(\theta)-\E[\xi(T)|\mathcal{F}_\theta], \ \forall \theta \in\mathcal{T}.
    \end{align*}
    It is easy to check that $\xi'(T)=\zeta'(T)=0$ and the criterion $I_\theta(\tau,\sigma)$ associated with $\xi,\zeta$ can be rewritten as
    \begin{align*}
    I_\theta(\tau,\sigma)=\E[\xi'(\tau) I_{\{\tau\leq \sigma\}}+\zeta'(\sigma) I_{\{\sigma<\tau\}}|\mathcal{F}_\theta]+\E[\xi(T)|\mathcal{F}_\theta].
\end{align*}
Therefore, solving the Dynkin game with reward families $\xi,\zeta$ degenerates into solving the Dynkin game with reward families $\xi',\zeta'$.

A sufficient condition to ensure that $\xi'\in\mathcal{S}^-$ and $\zeta'\in\mathcal{S}^+$ is that $\xi\in\mathcal{S}^-$, $\zeta\in\mathcal{S}^+$ and $\xi(T)$ is $p$-integrable for some $p>1$.
\end{remark}

\begin{theorem}\label{theorem2.3}
    There exist two nonnegative supermartingale families $J$ and $J'$ which satisfy $J=\mathcal{R}(J'+\xi)$ and $J'=\mathcal{R}(J-\zeta)$. Moreover, $J$ and $J'$ can be constructed in a minimal fashion. That is, if $\bar{J},\bar{J}'$ are two supermartingale families satisfying $\bar{J}=\mathcal{R}(\bar{J}'+\xi)$ and $\bar{J}'=\mathcal{R}(\bar{J}-\zeta)$, then we have $J\leq \bar{J}$ and $J'\leq \bar{J}'$.
\end{theorem}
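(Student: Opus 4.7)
The strategy is a monotone Picard-type iteration of Snell envelopes, passage to the limit, and a minimality argument by induction against any competitor pair $(\bar J,\bar J')$.

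\emph{Step 1: setting up the iteration.} I would define two sequences of admissible families by $J^{(0)}\equiv J'^{(0)}\equiv 0$ and, for $n\geq 0$,
\begin{align*}
J^{(n+1)} := \mathcal{R}\bigl(J'^{(n)}+\xi\bigr),\qquad J'^{(n+1)} := \mathcal{R}\bigl(J^{(n+1)}-\zeta\bigr).
\end{align*}
Because $\xi(T)=\zeta(T)=0$ and at the terminal time the Snell envelope coincides with its underlying family, an induction shows $J^{(n)}(T)=J'^{(n)}(T)=0$. Taking $\tau=T$ in the essential supremum representation of the Snell envelope then forces $J^{(n)},J'^{(n)}\geq 0$, so every iterate is a nonnegative supermartingale family.

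\emph{Step 2: monotonicity, domination and convergence.} Since $0=J^{(0)}\leq J^{(1)}$ and $0=J'^{(0)}\leq J'^{(1)}$, the monotonicity of $\mathcal{R}$ propagates to give $J^{(n)}\leq J^{(n+1)}$ and $J'^{(n)}\leq J'^{(n+1)}$ for all $n$. To obtain an integrable limit, I would exhibit a dominating integrable supermartingale---this is where $\xi\in\mathcal{S}^-$ and $\zeta\in\mathcal{S}^+$ enter, typically via $M(\theta):=\E[\esssup_{\sigma}\xi^+(\sigma)+\esssup_{\sigma}\zeta^-(\sigma)\mid\mathcal{F}_\theta]$ together with a Mokobodski-type splitting, and conclude inductively that every iterate is dominated by such an $M$. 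Monotone convergence then produces limits $J:=\lim_n J^{(n)}$ and $J':=\lim_n J'^{(n)}$ which, by the Fatou-type stability of supermartingale families, are themselves nonnegative supermartingale families.

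\emph{Step 3: passing to the limit in the fixed-point equations.} Using the characterisation $\mathcal{R}(\phi)(\theta)=\esssup_{\tau\in\mathcal{T}_\theta}\E[\phi(\tau)\mid\mathcal{F}_\theta]$ for $\phi\in\mathcal{S}^-$, I would take the limit in
\begin{align*}
J^{(n+1)}(\theta)=\esssup_{\tau\in\mathcal{T}_\theta}\E\bigl[J'^{(n)}(\tau)+\xi(\tau)\bigm|\mathcal{F}_\theta\bigr]
\end{align*}
by monotone convergence inside the conditional expectation and by exchanging $\esssup$ with the increasing limit (justified because the family is upward directed), obtaining $J=\mathcal{R}(J'+\xi)$; the identity $J'=\mathcal{R}(J-\zeta)$ follows symmetrically.

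\emph{Step 4: minimality.} Given any pair $(\bar J,\bar J')$ of supermartingale families with $\bar J=\mathcal{R}(\bar J'+\xi)$ and $\bar J'=\mathcal{R}(\bar J-\zeta)$, I prove $J^{(n)}\leq\bar J$ and $J'^{(n)}\leq\bar J'$ by induction: the base case is trivial, and if $J'^{(n)}\leq\bar J'$ then $J'^{(n)}+\xi\leq\bar J'+\xi$, hence monotonicity of $\mathcal{R}$ yields $J^{(n+1)}\leq\mathcal{R}(\bar J'+\xi)=\bar J$; analogously $J'^{(n+1)}\leq\bar J'$. Letting $n\to\infty$ gives $J\leq\bar J$ and $J'\leq\bar J'$.

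\emph{Main obstacle.} The delicate points are the uniform integrable dominator in Step 2 and the exchange of $\esssup$ with the monotone limit in Step 3, since the general admissible-family framework precludes direct use of path-regularity (càdlàg) arguments. I would rely on the Snell envelope machinery for admissible families developed in \cite{KQ,KQdC}, in particular the essential supremum representation of $\mathcal{R}(\phi)$ for $\phi\in\mathcal{S}^-$ and the stability of supermartingale families under nondecreasing limits.
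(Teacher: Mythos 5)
The paper does not actually prove Theorem \ref{theorem2.3}; it is quoted from \cite{KQdC}, and your Picard-type iteration of Snell envelopes with a monotone limit and an inductive minimality argument is precisely the construction used there, so the overall architecture is right. Steps 1, 3 and 4 are sound: at each stage $(J'^{(n)}+\xi)^-\leq \xi^-$ and $(J^{(n+1)}-\zeta)^-\leq \zeta^+$ lie in $\mathcal{S}$, so the essential-supremum representation of $\mathcal{R}$ applies throughout, and the exchange of $\esssup_\tau$ with the increasing limit in $n$ together with conditional monotone convergence (the integrands being uniformly bounded below by $-\xi^-$) gives the fixed-point identities in the limit.

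The one genuinely problematic point is Step 2. First, the dominator you propose, $M(\theta)=\E[\esssup_\sigma\xi^+(\sigma)+\esssup_\sigma\zeta^-(\sigma)\mid\mathcal{F}_\theta]$, controls the wrong signs: the standing hypotheses are $\xi\in\mathcal{S}^-$ and $\zeta\in\mathcal{S}^+$, which bound $\xi^-$ and $\zeta^+$ respectively, so there is no reason for $\esssup_\sigma\xi^+(\sigma)$ or $\esssup_\sigma\zeta^-(\sigma)$ to be integrable. Second, and more importantly, no integrable dominator is needed or available at this stage: the families are $\bar{\mathbb{R}}$-valued, and the theorem is deliberately stated without any finiteness claim --- $J(0)=+\infty$ is possible, and $J(0)<+\infty$ is only later shown (Propositions \ref{proposition2.5} and \ref{proposition5.2}) to be equivalent to the Mokobodski condition, which is an \emph{additional} hypothesis not in force here. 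The correct argument is simply that a nondecreasing limit of nonnegative supermartingale families is again a nonnegative (possibly $+\infty$-valued) supermartingale family by conditional monotone convergence, with no domination required. Finally, a small gap in Step 4: the base case $J^{(0)}=0\leq\bar J$ requires the competitor $\bar J$ to be nonnegative, which you should either assume (as \cite{KQdC} does, restricting to nonnegative supermartingale families) or derive before starting the induction.
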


\begin{proposition}\label{proposition2.5}
    The condition $J(0)<+\infty$ is equivalent to the condition $J'(0)<+\infty$. Moreover, if $J(0)<+\infty$, the family of random variables $Y$ given by $Y:=J-J'$ is well-defined and satisfies
    \begin{align*}
        \xi\leq Y\leq \zeta.
    \end{align*}
\end{proposition}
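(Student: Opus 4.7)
The plan is to deduce Proposition \ref{proposition2.5} directly from Theorem \ref{theorem2.3} by using the defining domination property of a Snell envelope family, namely $\mathcal{R}(\phi)\geq \phi$. Since $J=\mathcal{R}(J'+\xi)$ and $J'=\mathcal{R}(J-\zeta)$, for every $\theta\in\mathcal{T}$ one obtains the two pointwise inequalities
\begin{align*}
J(\theta)\geq J'(\theta)+\xi(\theta), \qquad J'(\theta)\geq J(\theta)-\zeta(\theta),
\end{align*}
which rearrange to $J'\leq J-\xi$ and $J\leq J'+\zeta$. These two inequalities are the only facts from Theorem \ref{theorem2.3} that I intend to use.

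For the equivalence, I would specialize the rearranged inequalities at $\theta=0$ to get $J'(0)\leq J(0)-\xi(0)$ and $J(0)\leq J'(0)+\zeta(0)$. The integrability assumption on $\xi$ and $\zeta$ forces $\xi(0)$ and $\zeta(0)$ to be almost surely finite, so either of the conditions $J(0)<+\infty$ or $J'(0)<+\infty$ (interpreted in the a.s.\ sense) implies the other.

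For the second assertion, assume $J(0)<+\infty$ a.s.\ (hence also $J'(0)<+\infty$ a.s.\ by the first part). Since $J$ is a nonnegative supermartingale family, $\E[J(\theta)|\mathcal{F}_0]\leq J(0)<+\infty$ a.s., so $J(\theta)<+\infty$ a.s.\ for every $\theta\in\mathcal{T}$; the same reasoning applied to $J'$ yields $J'(\theta)<+\infty$ a.s. This rules out any $\infty-\infty$ ambiguity, so $Y(\theta):=J(\theta)-J'(\theta)$ is a well-defined $\mathcal{F}_\theta$-measurable random variable and inherits admissibility from $J$ and $J'$. The desired bounds $\xi\leq Y\leq \zeta$ are then simply a rewriting of the two Snell-envelope inequalities above. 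I do not anticipate any substantive obstacle: the argument is short bookkeeping built on the domination property of $\mathcal{R}$, the integrability of $\xi(0),\zeta(0)$, and the nonnegative supermartingale property; the only mild point of care is reading ``$J(0)<+\infty$'' as almost-sure finiteness for the $\mathcal{F}_0$-measurable random variable $J(0)$.
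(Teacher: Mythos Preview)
The paper does not give its own proof of this proposition: Section~2 merely recalls the result from \cite{KQdC}. Your argument is the natural and correct one. The two inequalities $J\geq J'+\xi$ and $J'\geq J-\zeta$ coming from the domination property of $\mathcal{R}$ immediately yield both the equivalence at time $0$ and the sandwich $\xi\leq J-J'\leq\zeta$, and the nonnegative supermartingale property propagates a.s.\ finiteness from $J(0),J'(0)$ to every $J(\theta),J'(\theta)$ (when $\mathcal{F}_0$ is not trivial, localize on $\{J(0)\leq n\}\in\mathcal{F}_0$ to turn the conditional bound $\E[J(\theta)|\mathcal{F}_0]\leq J(0)$ into integrability of $J(\theta)$ on each level set). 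Nothing further is needed.
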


Recall that the Mokobodski condition (see \cite{CK,HL,PX}) for $\xi,\zeta$ amounts to say the existence of two nonnegative a.s. finite supermartingales $H$ and $H'$ such that $\xi\leq H-H'\leq \zeta$. Hence, Proposition \ref{proposition2.5} indicates that if $J(0)<+\infty$, then the Mokobodski condition holds. Actually, these two conditions are equivalent, which is shown in the following proposition.

\begin{proposition}\label{proposition5.2}
    The condition $J(0)<+\infty$ (or equivalently $J'(0)<+\infty$) is equivalent to the Mokobodski condition for $\xi,\zeta$, i.e., there exist two nonnegative a.s. finite supermartingales $H$ and $H'$ such that $\xi\leq H-H'\leq \zeta$. Moreover, if $H^2,(H')^2\in \mathcal{S}$, we have $J^2,(J')^2\in\mathcal{S}$.
\end{proposition}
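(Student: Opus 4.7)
The forward direction is essentially already recorded in the paragraph preceding the statement: by Proposition \ref{proposition2.5}, if $J(0)<+\infty$ then $\xi\le J-J'\le\zeta$, and since $J$ and $J'$ are nonnegative a.s.\ finite supermartingale families, the pair $(H,H'):=(J,J')$ witnesses the Mokobodski condition.

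For the converse, assume $\xi\le H-H'\le\zeta$ with $H,H'$ nonnegative a.s.\ finite supermartingale families. My plan is to construct a candidate $(J^\star,(J^\star)')$ via a Picard-type iteration and then invoke the minimality clause of Theorem \ref{theorem2.3}. Set $J_0:=0$, $J_0':=0$, and for $n\ge 0$ define recursively
\begin{align*}
J_{n+1}:=\mathcal{R}(J_n'+\xi),\qquad J_{n+1}':=\mathcal{R}(J_{n+1}-\zeta).
\end{align*}
A short check, using $\xi\in\mathcal{S}^-$, $\zeta\in\mathcal{S}^+$, and the fact that the terminal values at $T$ are nonnegative, shows that these Snell envelopes are well defined and produce nonnegative supermartingale families. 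Two inductions then drive the argument: monotonicity of $\mathcal{R}$ in its argument gives $J_n\uparrow$ and $J_n'\uparrow$ in $n$; while the Mokobodski inequalities
\[
J_n'+\xi\le H'+(H-H')=H,\qquad J_{n+1}-\zeta\le H-(H-H')=H',
\]
combined with the elementary identity $\mathcal{R}(H)=H$ (a supermartingale family dominates itself, and $\mathcal{R}$ is the minimal one doing so), yield $J_n\le H$ and $J_n'\le H'$ for every $n$ by induction.

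Set $J^\star:=\lim_n J_n$ and $(J^\star)':=\lim_n J_n'$. Monotone convergence for conditional expectations ensures that $J^\star$ and $(J^\star)'$ remain supermartingale families and, after interchanging the monotone limit with the essential supremum in the optimal-stopping representation of $\mathcal{R}$, that they satisfy $J^\star=\mathcal{R}((J^\star)'+\xi)$ and $(J^\star)'=\mathcal{R}(J^\star-\zeta)$. The minimality clause of Theorem \ref{theorem2.3} then forces $J\le J^\star\le H$ and $J'\le (J^\star)'\le H'$, and in particular $J(0)\le H(0)<+\infty$. I expect the only real technical point to be this last limit-exchange step, which is a stability statement for the operator $\mathcal{R}$ along an increasing sequence of rewards; everything else is bookkeeping on $\mathcal{S}^-$ and $\mathcal{S}^+$.

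The moreover-clause falls out directly from the comparison $0\le J\le H$, $0\le J'\le H'$ just obtained: squaring preserves the pointwise ordering of nonnegative admissible families, so $\esssup_{\theta\in\mathcal{T}}J(\theta)^2\le\esssup_{\theta\in\mathcal{T}}H(\theta)^2$ almost surely, and analogously for $J'$ and $H'$. Taking expectations yields $J^2,(J')^2\in\mathcal{S}$ whenever $H^2,(H')^2\in\mathcal{S}$.
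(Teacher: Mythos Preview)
The paper does not actually prove this proposition; all of Section~2 consists of results quoted from Kobylanski--Quenez--de Campagnolle \cite{KQdC}, and the labels (``theorem2.3'', ``proposition5.2'', etc.) reflect the numbering in that reference rather than the present paper. So there is no in-paper proof to compare against.

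That said, your argument is correct and is precisely the standard one. In fact it slightly over-works: in \cite{KQdC} the minimal pair $(J,J')$ of Theorem~\ref{theorem2.3} is \emph{constructed} as the increasing limit of the very iterates $J_n,J_n'$ you introduce. Once your induction gives $J_n\le H$ and $J_n'\le H'$ for all $n$, you therefore get $J\le H$ and $J'\le H'$ immediately, without needing to verify that the limits satisfy the fixed-point equations or to invoke the minimality clause separately. Your route---closing the loop by checking $J^\star=\mathcal{R}((J^\star)'+\xi)$ and then appealing to minimality---is a legitimate alternative that avoids relying on how Theorem~\ref{theorem2.3} is proved, and the monotone-limit/essential-supremum exchange you flag as the only delicate step does go through: one inequality is monotonicity of $\mathcal{R}$, and for the other one fixes $\tau\ge\theta$, uses monotone convergence on $\E[J_n'(\tau)+\xi(\tau)\mid\mathcal{F}_\theta]$, and bounds each term by $J_{n+1}(\theta)\le J^\star(\theta)$. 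The treatment of the ``moreover'' clause via the pointwise comparison $0\le J\le H$, $0\le J'\le H'$ is exactly right.
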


In order to make sure the Dynkin game is fair, i.e., the upper value and lower value are equal, we need to propose the following regularity condition for the reward family.

\begin{definition}
    A family $\phi\in\mathcal{S}^-$ is said to be right-(resp. left-)upper semicontinuous in expectation along stopping times (RUSCE (resp. LUSCE)) if for any $\theta\in\mathcal{T}$ and for any sequences of stopping times $\{\theta_n\}_{n\in\mathbb{N}}$ such that $\theta_n\downarrow \theta$ (resp. $\theta_n\uparrow \theta$), we have
    \begin{align*}
        \E[\phi(\theta)]\geq \limsup_{n\rightarrow \infty}\E[\phi(\theta_n)].
    \end{align*}
    Moreover, $\phi$ is said to be USCE if it is both RUSCE and LUSCE.
\end{definition}

\begin{theorem}\label{theorem3.6}
    Suppose that $J(0)<+\infty$ and the families $\xi$ and $-\zeta$ are RUSCE. Then, the Dynkin game is fair and the common value function is equal to $Y$. That is, for any $\theta\in\mathcal{T}$, we have
    \begin{align*}
        Y(\theta)=\underline{V}(\theta)=\overline{V}(\theta) \textrm{ a.s.}
    \end{align*}
\end{theorem}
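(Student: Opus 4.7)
The plan is to combine the Snell envelope characterizations $J=\mathcal{R}(J'+\xi)$ and $J'=\mathcal{R}(J-\zeta)$ from Theorem~\ref{theorem2.3} with the regularity afforded by the RUSCE assumption on $\xi$ and $-\zeta$, to build $\varepsilon$-optimal stopping strategies for both players of the Dynkin game, and to sandwich $\underline{V}(\theta)$ and $\overline{V}(\theta)$ between $Y(\theta)-c\varepsilon$ and $Y(\theta)+c\varepsilon$.

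First I record two baseline observations on which the rest rests: the inequality $\underline{V}(\theta)\le\overline{V}(\theta)$ holds automatically by the min--max inequality, and Proposition~\ref{proposition2.5} gives the pointwise sandwich $\xi\le Y\le\zeta$. It therefore suffices to prove the matching bounds $\overline{V}(\theta)\le Y(\theta)$ and $\underline{V}(\theta)\ge Y(\theta)$, and the fairness of the game together with the identification of the common value will follow.

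For $\overline{V}(\theta)\le Y(\theta)$, I fix $\varepsilon>0$ and, invoking the generalised optimal stopping theory for RUSCE reward families from \cite{KQ,KQdC}, select $\sigma^{*}_{\varepsilon}\in\mathcal{T}_\theta$ that is $\varepsilon$-optimal for the Snell envelope $J'=\mathcal{R}(J-\zeta)$. This $\sigma^{*}_{\varepsilon}$ yields two ingredients: (i) $J'(\theta)\le \E[(J-\zeta)(\sigma^{*}_{\varepsilon})\mid\mathcal{F}_\theta]+\varepsilon$, so that $\zeta(\sigma^{*}_{\varepsilon})$ is $\varepsilon$-close to $Y(\sigma^{*}_{\varepsilon})$ in conditional mean; and (ii) the family $(J'(s))_{\theta\le s\le \sigma^{*}_{\varepsilon}}$ is essentially a martingale family. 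For any competing $\tau\in\mathcal{T}_\theta$ I then split $I_\theta(\tau,\sigma^{*}_{\varepsilon})$ on $\{\tau\le\sigma^{*}_{\varepsilon}\}\cup\{\sigma^{*}_{\varepsilon}<\tau\}$: on the first event use $\xi(\tau)\le (J-J')(\tau)=Y(\tau)$, and on the second replace $\zeta(\sigma^{*}_{\varepsilon})$ by $(J-J')(\sigma^{*}_{\varepsilon})$ modulo the $\varepsilon$-error from (i). Pulling expectations back to time $\theta$ using the supermartingale property of $J$ and the near-martingale property of $J'$ on $[\theta,\tau\wedge\sigma^{*}_{\varepsilon}]$, I obtain $I_\theta(\tau,\sigma^{*}_{\varepsilon})\le Y(\theta)+C\varepsilon$ uniformly in $\tau$; taking the essential supremum over $\tau$ and sending $\varepsilon\downarrow 0$ closes this direction. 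The opposite inequality $\underline{V}(\theta)\ge Y(\theta)$ is by the symmetric construction: the RUSCE property of $\xi$ together with $J=\mathcal{R}(J'+\xi)$ produces an $\varepsilon$-optimal $\tau^{*}_{\varepsilon}\in\mathcal{T}_\theta$ along which $J$ is a near-martingale and at which $\xi(\tau^{*}_{\varepsilon})$ approximates $Y(\tau^{*}_{\varepsilon})$; the same bookkeeping yields $I_\theta(\tau^{*}_{\varepsilon},\sigma)\ge Y(\theta)-C\varepsilon$ uniformly in $\sigma$.

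The most delicate step will be the simultaneous control of the two properties of $\sigma^{*}_{\varepsilon}$ (and dually of $\tau^{*}_{\varepsilon}$): the $\varepsilon$-optimal stopping time must both make $(J-\zeta)(\sigma^{*}_{\varepsilon})$ close to $J'(\sigma^{*}_{\varepsilon})$, so that $\zeta(\sigma^{*}_{\varepsilon})\approx Y(\sigma^{*}_{\varepsilon})$, and serve as a near-martingale time for $J'$, so that evaluation of the criterion at $\tau\wedge\sigma^{*}_{\varepsilon}$ collapses cleanly to $J(\theta)-J'(\theta)=Y(\theta)$. This is exactly what the RUSCE hypotheses on $\xi$ and $-\zeta$ buy via the Kobylanski--Quenez framework, and I would quote \cite{KQ,KQdC} for the relevant structural optimal-stopping results rather than redevelop them.
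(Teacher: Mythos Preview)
The paper does not supply its own proof of this theorem: it is stated in the preliminaries section as a result recalled from \cite{KQdC}, so there is no in-paper argument to compare against. Your proposal is a faithful sketch of the proof in \cite{KQdC}: the construction of $\varepsilon$-optimal stopping times $\tau^*_\varepsilon$ and $\sigma^*_\varepsilon$ via the Snell envelope identities $J=\mathcal{R}(J'+\xi)$, $J'=\mathcal{R}(J-\zeta)$, the use of RUSCE to ensure these stopping times simultaneously achieve near-optimality for the reward and the near-martingale property for $J$ (resp.\ $J'$), and the resulting sandwich $Y(\theta)-C\varepsilon\le \underline{V}(\theta)\le \overline{V}(\theta)\le Y(\theta)+C\varepsilon$ are exactly the ingredients of the original argument.
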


\begin{remark}\label{r2.11}
Suppose that the reward are given by two progressive processes $\xi=\{\xi_t\}_{t\in[0,T]},\zeta=\{\zeta_t\}_{t\in[0,T]}$ such that $\xi_T=\zeta_T=0$ and 
\begin{align*}
    \E\left[\esssup_{t\in[0,T]}\xi^-_t\right]<+\infty,  \ \E\left[\esssup_{t\in[0,T]}\zeta^+_t\right]<+\infty.
\end{align*}  
All the results in this section still hold. Besides, all the admissible families, e.g., $\underline{V},\overline{V},J,J',Y$ can be aggregated into the corresponding progressive processes. Moreover, if the reward processes are continuous, all the resulting processes are continuous. 
\end{remark}

\section{Doubly conditional reflected BSDEs}

We are given a finite time horizon $T>0$. Let $B$ be a $d$-dimensional standard Brownian motion defined on  a  probability space $(\Omega,\mathcal{F},\P)$. We denote by $\mathbb{F}=\{\mathcal{F}_t\}_{t\in[0,T]}$ be the complete filtration generated by $B$. Consider a subfiltration $\mathbb{G}=\{\mathcal{G}_t\}_{t\in[0,T]}$ of $\mathbb{F}$, i.e., for any $t\in[0,T]$, $\mathcal{G}_t\subset \mathcal{F}_t$. Throughout this paper, we assume that $\mathbb{G}$ satisfies the following assumptions.

\begin{assumption}\label{assG}
    \begin{itemize}
        \item[(i)] $\mathbb{G}$ is nondecreasing and right-continuous;
        \item [(ii)] $\mathbb{G}$ is left-quasi-continuous. That is, $\mathbb{G}$ is left-continuous along stopping times.
    \end{itemize}
\end{assumption}

\begin{remark}
    $\mathbb{F}$ and $\mathbb{H}=\{\mathcal{H}_t\}_{t\in[0,T]}$ are two trivial examples of subfiltration satisfying Assumption \ref{assG}, where  $\mathcal{H}_t=\mathcal{F}_0$, for any $t\in[0,T]$. $\mathbb{H}$ is called the deterministic scenario.
\end{remark}

We first introduce the following notations, which will be frequently used in this paper.
\begin{itemize}
\item $L^2(\mathcal{F}_t)$: the set of real-valued $\mathcal{F}_t$-measurable random variable $\xi$ such that $\E[|\xi|^2]<\infty$.
\item $\mathcal{S}^2(0,T;\mathbb{R})$: the set of real-valued $\mathbb{F}$-adapted continuous processes $Y$ on $[0,T]$ such that $$\E\left[\sup_{t\in[0,T]}|Y_t|^2\right]<\infty.$$
\item $\mathcal{H}^2(0,T;\mathbb{R}^d)$: the set of $\mathbb{R}^d$-valued $\mathbb{F}$-progressively measurable processes $Z$ such that $$\E\left[\int_0^T|Z_t|^2dt\right]<\infty.$$
\item $\mathcal{A}^2_\mathbb{G}(0,T;\mathbb{R})$: the set of $\mathbb{G}$-adapted nondecreasing processes $K\in \mathcal{S}^2(0,T;\mathbb{R})$ such that $K_0=0$.
\item $\mathcal{BV}^2_{\mathbb{G}}(0,T;\mathbb{R})$: the set of $\mathbb{G}$-adapted processes  $K\in \mathcal{S}^2(0,T;\mathbb{R})$ such that $K\equiv K^1-K^2$ with $K^i\in \mathcal{A}^2_{\mathbb{G}}(0,T;\mathbb{R})$, $i=1,2$.
\item $C[0,T]$: the set of continuous functions from $[0,T]$ to $\mathbb{R}$.
%\item $BV[0,T]$: the set of functions in $C[0,T]$ starting from the origin with bounded variation on $[0,T]$.
\item $I[0,T]$: the set of functions in $C[0,T]$ starting from the origin which is nondecreasing.
\end{itemize}

For simplicity, we always omit the brackets when there is no confusion. The main purpose of this paper is to study the doubly conditional  reflected BSDE  the following type
\begin{equation}\label{nonlinearyz}
\begin{cases}
Y_t=\xi+\int_t^T f(s,Y_s,Z_s)ds-\int_t^T Z_s dB_s+K_T-K_t, \\
\E[L_t|\mathcal{G}_t]\leq \E[Y_t|\mathcal{G}_t]\leq \E[U_t|\mathcal{G}_t], \\
K_t=K^+_t-K^-_t, \ K^+,K^-\in \mathcal{A}^2_{\mathbb{G}},\\
\int_0^T \E[Y_t-L_t|\mathcal{G}_t]dK_t^+=\int_0^T \E[U_t-Y_t|\mathcal{G}_t]dK^-_t=0.
\end{cases}
\end{equation}
The above doubly conditional reflected BSDE is determined by the following parameters: the terminal value $\xi$, the driver $f$, the lower obstacle $L$ and the upper obstacle $U$. We will make the following assumptions for the parameters.

\begin{itemize}
\item[(H1)]The driver $f$ is a map from $\Omega\times[0,T]\times \mathbb{R}\times\mathbb{R}^d$ to $\mathbb{R}$. For 
each fixed $(y,z)$, $f(\cdot,\cdot,y,z)$ is progressively measurable. There exists $\lambda>0$ such that for any $t\in[0,T]$ and any $y,y'\in\mathbb{R}$, $z,z'\in\mathbb{R}^d$
\begin{align*}
|f(t,y,z)-f(t,y',z')|\leq \lambda(|y-y'|+|z-z'|)
\end{align*}
and 
\begin{align*}
\E\left[\int_0^T |f(t,0,0)|^2dt\right]<\infty.
\end{align*}
\item[(H2)] The obstacles $L,U\in \mathcal{S}^2$ satisfy
\begin{align*}
    \inf_{(t,\omega)\in [0,T]\times \Omega}(U_t(\omega)-L_t(\omega))>0.
\end{align*}
\item[(H3)] The terminal value $\xi\in L^2(\mathcal{F}_T)$ and $\E[L_T|\mathcal{G}_T]\leq \E[\xi|\mathcal{G}_T]\leq \E[U_T|\mathcal{G}_T]$.
\end{itemize}

\begin{remark}
    (i) Suppose that $U\equiv +\infty$. The doubly conditional  reflected BSDE \eqref{nonlinearyz} degenerates to the BSDE with conditional reflection studied in \cite{HHL}. 

    \noindent (ii) For the case that $\mathbb{G}=\mathbb{F}$, the doubly conditional  reflected BSDE \eqref{nonlinearyz} turns into the doubly reflected BSDE (see \cite{CK,HH,HL,PX}). For the case that $\mathbb{G}$ is the deterministic scenario, the doubly conditional reflected BSDE \eqref{nonlinearyz} becomes the doubly mean reflected BSDE (see \cite{FS,L}).
\end{remark}

Based on the Skorokhod problem in a time-dependent interval (see \cite{BKR,Slaby}), we have the following pointwise representation for the bounded variation term $K$, which will be helpful to establish some a priori estimates for solutions to doubly conditional reflected BSDEs.
\begin{proposition}\label{repK}
    Under Assumptions (H1)-(H3), suppose that $(Y,Z,K)\in \mathcal{S}^2\times \mathcal{H}^2\times \mathcal{BV}^2_{\mathbb{G}}$ is a solution to the doubly conditional reflected BSDE \eqref{nonlinearyz}. Then, for any $t\in[0,T]$ and $\omega\in \Omega$, we have
    \begin{equation*}\label{representationforK}
        \begin{split}
            K_T(\omega)-K_t(\omega)=-\max\Big\{&0\wedge \inf_{v\in[t,T]}(a^\omega+x^\omega_T-x_v^\omega-l_v^\omega),\\
            &\sup_{r\in[t,T]}\left[(a^\omega+x^\omega_T-x_r^\omega-u_r^\omega)\wedge \inf_{v\in[t,r]}(a^\omega+x^\omega_T-x_v^\omega-l_v^\omega)\right]\Big\},
        \end{split}
    \end{equation*}
    where $a^\omega=\E[\xi|\mathcal{G}_T](\omega)$ and %for each $\omega\in \Omega$ and $t\in[0,T]$,
    \begin{align*}
    %&y_t(\omega)=\E[Y_t|\mathcal{G}_t](\omega), \ a(\omega)=\E[\xi|\mathcal{G}_T](\omega),\\
    &l_t^\omega=\E[L_t|\mathcal{G}_t](\omega), \ u_t^\omega=\E[U_t|\mathcal{G}_t](\omega),\\
     &   x_t^\omega=\E\left[\left(\int_0^t f(s,Y_s,Z_s)ds-\int_0^t Z_s dB_s\right)\Big|\mathcal{G}_t\right](\omega).
    \end{align*}
\end{proposition}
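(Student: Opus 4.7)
The plan is to reduce the identification of $K_T-K_t$ to the explicit representation of the Skorokhod map on a time-dependent interval from \cite{BKR,Slaby}, after recasting the BSDE as a standard forward Skorokhod problem on $[t,T]$.

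I will first take the conditional expectation of the forward form of the BSDE. Subtracting the BSDE at time $s$ from the BSDE at time $0$ yields $Y_s=Y_0-\int_0^s f(u,Y_u,Z_u)du+\int_0^s Z_u dB_u-K_s$. Since $\mathcal{F}_0$ is $\P$-trivial (as $\mathbb{F}$ is the augmented Brownian filtration), $Y_0$ is almost surely a constant, and since $K$ is $\mathbb{G}$-adapted, $K_s$ is $\mathcal{G}_s$-measurable. Applying $\E[\,\cdot\,|\mathcal{G}_s]$ and using the definition of $x_s$ gives $\E[Y_s|\mathcal{G}_s]=Y_0-x_s-K_s$. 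Evaluating at $s=T$ and using $Y_T=\xi$ identifies $Y_0=a+x_T+K_T$ almost surely, and hence, for every $s\in[0,T]$,
\begin{equation*}
\E[Y_s|\mathcal{G}_s]=a+(x_T-x_s)+(K_T-K_s).
\end{equation*}

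I will then fix $\omega\in\Omega$ and restrict attention to $s\in[t,T]$. Setting $\Psi_s=a^\omega+x_T^\omega-x_s^\omega$ and $\Lambda^\pm_s=K^\pm_T(\omega)-K^\pm_s(\omega)$, the identity reads $\eta_s:=\E[Y_s|\mathcal{G}_s](\omega)=\Psi_s+\Lambda^+_s-\Lambda^-_s\in[l_s^\omega,u_s^\omega]$, where $\Lambda^\pm$ are continuous, nonnegative, nonincreasing in $s$, with $\Lambda^\pm_T=0$. The Skorokhod conditions of \eqref{nonlinearyz} translate pathwise into the statements that $\Lambda^+$ decreases only on $\{\eta=l^\omega\}$ and $\Lambda^-$ decreases only on $\{\eta=u^\omega\}$. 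Applying the time change $r=T+t-s$ and defining $\tilde\Psi_r=\Psi_{T+t-r}$, $\tilde l_r=l^\omega_{T+t-r}$, $\tilde u_r=u^\omega_{T+t-r}$, $\tilde\Lambda^\pm_r=\Lambda^\pm_{T+t-r}$ recasts this as a standard forward-in-time Skorokhod problem on $[t,T]$ for $\tilde\eta_r=\tilde\Psi_r+\tilde\Lambda^+_r-\tilde\Lambda^-_r\in[\tilde l_r,\tilde u_r]$, with $\tilde\Lambda^\pm_t=0$ and initial value $\tilde\eta_t=a\in[l^\omega_T,u^\omega_T]$ guaranteed by (H3).

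Finally, I will invoke the explicit representation of the Skorokhod map on a time-dependent interval from \cite{BKR,Slaby} evaluated at $r=T$ to express $\tilde\Lambda^+_T-\tilde\Lambda^-_T$ as minus the nested maximum of $0\wedge\inf_{s\in[t,T]}(\tilde\Psi_s-\tilde l_s)$ and $\sup_{\rho\in[t,T]}[(\tilde\Psi_\rho-\tilde u_\rho)\wedge\inf_{s\in[\rho,T]}(\tilde\Psi_s-\tilde l_s)]$. Inverting the time change via $v=T+t-s$ transforms the inner infimum over $s\in[\rho,T]$ into $\inf_{v\in[t,r]}$ with $r=T+t-\rho$; since $K_T(\omega)-K_t(\omega)=\Lambda^+_t-\Lambda^-_t=\tilde\Lambda^+_T-\tilde\Lambda^-_T$, this reproduces the representation claimed in the proposition. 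The main difficulty is the careful bookkeeping of signs and time indices when passing between the backward-in-$s$ structure imposed by the terminal condition of the BSDE and the forward Skorokhod problem, and in matching the resulting nested max/sup/inf to the specific form given in the statement; the conditional-expectation step itself is routine once the triviality of $\mathcal{F}_0$ is noted.
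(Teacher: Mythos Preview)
Your proposal is correct and follows essentially the same route as the paper: both rewrite the BSDE in forward form, take $\mathcal{G}_s$-conditional expectations to obtain $\E[Y_s|\mathcal{G}_s]=a+(x_T-x_s)+(K_T-K_s)$, fix $\omega$, perform a time reversal to recast the constraints and Skorokhod conditions as a standard forward Skorokhod problem on a time-dependent interval, and then invoke the explicit representation of \cite{BKR,Slaby}. The only cosmetic difference is that the paper reverses time globally on $[0,T]$ and then evaluates the resulting formula at $T-t$, whereas you restrict to $[t,T]$ first and reverse there; the bookkeeping is otherwise identical.
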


\begin{proof}
    Rewrite \eqref{nonlinearyz} in the forward form and taking conditional expecations yield that 
    \begin{align*}
        \E[Y_t|\mathcal{G}_t]=\left(Y_0-\E\left[\left(\int_0^t f(s,Y_s,Z_s)ds-\int_0^t Z_s dB_s\right)\Big|\mathcal{G}_t\right]\right)-K_t.
    \end{align*}
   Consequently, we have
   \begin{align*}
       \E[Y_t|\mathcal{G}_t]-\E[\xi|\mathcal{G}_T]=&\Bigg( \E\left[\int_0^T f(s,Y_s,Z_s)ds\Big|\mathcal{G}_T\right]- \E\left[\int_0^t f(s,Y_s,Z_s)ds\Big|\mathcal{G}_t\right]\\
      & + \E\left[\int_0^t Z_sdB_s\Big|\mathcal{G}_t\right]- \E\left[\int_0^T Z_sdB_s\Big|\mathcal{G}_T\right]\Bigg)+K_T-K_t.
   \end{align*}
   For each fixed $\omega$, set $\tilde{y}^\omega_t=\E[Y_{T-t}|\mathcal{G}_{T-t}](\omega)$, $\tilde{l}^\omega_t=\E[L_{T-t}|\mathcal{G}_{T-t}](\omega)$, $\tilde{u}^\omega_t=\E[U_{T-t}|\mathcal{G}_{T-t}](\omega)$, $\tilde{k}^\omega_t=(K_T-K_{T-t})(\omega)$ and 
   \begin{align*}
       \tilde{x}^\omega_t=&\Bigg( \E\left[\int_0^T f(s,Y_s,Z_s)ds\Big|\mathcal{G}_T\right]- \E\left[\int_0^{T-t} f(s,Y_s,Z_s)ds\Big|\mathcal{G}_{T-t}\right]\\
      & + \E\left[\int_0^{T-t} Z_sdB_s\Big|\mathcal{G}_{T-t}\right]- \E\left[\int_0^T Z_sdB_s\Big|\mathcal{G}_T\right]+\E[\xi|\mathcal{G}_T|]\Bigg)(\omega)\\
      =&a^\omega+x^\omega_T-x^\omega_{T-t}.
   \end{align*}
   Then, $(\tilde{y}^\omega,\tilde{k}^\omega)$ is the solution to the Skorokhod problem on $[\tilde{l}^\omega,\tilde{u}^\omega]$ for $\tilde{x}^\omega$ (see Definition 2.1 in \cite{BKR} or Definition 1.2 in \cite{Slaby}). That is, we have
   \begin{displaymath}
        \begin{cases}
        \tilde{y}^\omega_t=\tilde{x}^\omega_t+\tilde{k}^\omega_t,\\
        \tilde{l}_t^\omega\leq \tilde{y}_t^\omega\leq \tilde{u}_t^\omega, \ \tilde{k}_t^\omega=\tilde{k}_t^{\omega,+}-\tilde{k}_t^{\omega,-},, \ \tilde{k}_t^{\omega,+},\tilde{k}_t^{\omega,-}\in I[0,T],\\
         \int_0^T (\tilde{y}_t^\omega-\tilde{l}_t^\omega)d\tilde{k}^{\omega,+}_t=\int_0^T (\tilde{u}_t^\omega-\tilde{y}_t^\omega)d\tilde{k}^{\omega,-}_t=0.
        \end{cases}
    \end{displaymath}    
    By Theorem 2.6 in \cite{BKR} or Theorem 2.1 in \cite{Slaby}, we have
    \begin{align*}
        \tilde{k}^\omega_t=-\max\left\{\left[(\tilde{x}^\omega_0-\tilde{u}^\omega_0)^+\wedge \inf_{r\in[0,t]}(\tilde{x}^\omega_r-\tilde{l}^\omega_r)\right],\sup_{s\in[0,t]}\left[(\tilde{x}^\omega_s-\tilde{u}^\omega_s)\wedge \inf_{r\in[s,t]}(\tilde{x}^\omega_r-\tilde{l}^\omega_r)\right]\right\}.
    \end{align*}
    The proof is complete.
%    Set $y_t(\omega)=\E[Y_t|\mathcal{G}_t](\omega)$. Then, \eqref{nonlinearyz} can be written as follows
 %   \begin{displaymath}
 %       \begin{cases}
 %       y_t(\omega)=a(\omega)+s_T(\omega)-s_t(\omega)+K_T(\omega)-K_t(\omega),\\
 %       l_t(\omega)\leq y_t(\omega)\leq u_t(\omega), \ K_t(\omega)=K_t^+(\omega)-K_t^-(\omega),\\
 %        \int_0^T (y_t(\omega)-l_t(\omega))dK^+_t(\omega)=\int_0^T (u_t(\omega)-y_t(\omega))dK^-_t(\omega)=0.
 %       \end{cases}
 %   \end{displaymath}
 %   That is, for any $\omega\in \Omega$, $(y(\omega),K(\omega))$ is the solution to the backward Skorokhod problem $BSP_{l(\omega)}^{u(\omega)}(s(\omega),a(\omega))$.     By Lemma 2.5 in \cite{FS}, we obtain the desired representation \eqref{representationforK}.
\end{proof}

Recall that if $(y^i,k^i)$ are the solutions to the Skorokhod problem on $[l^i,u^i]$ for $x^i$, $i=1,2$, then, by Proposition 4.1 in \cite{Slaby}, there exists a constant $C>0$, such that 
\begin{align}\label{diffk}
    \sup_{t\in[0,T]}|k^1_t-k^2_t|\leq C\left\{\sup_{t\in[0,T]}|x^1_t-x^2_t|+\sup_{t\in[0,T]}\max(|l^1_t-l^2_t|,|u^1_t-u^2_t|)\right\}.%\sup_{t\in[0,T]}|k^1_t-k^2_t|\leq |a^1-a^2|+\sup_{t\in[0,T]}|s^1_T-s^1_t-s^2_T+s^2_t|+\sup_{t\in[0,T]}\max(|l^1_t-l^2_t|,|u^1_t-u^2_t|).
\end{align}
We first introduce some a priori estimates for the solutions to the doubly conditional  reflected BSDEs.

\begin{proposition}\label{thm2.3}
    Let $(Y^i,Z^i,K^i)\in \mathcal{S}^2\times\mathcal{H}^2\times\mathcal{BV}_{\mathbb{G}}^2$ be the solution to the doubly conditional reflected BSDE with parameters $(\xi^i,f^i,L,U)$ satisfying Assumptions (H1)-(H3), $i=1,2$. Then, there exists a constant $C$ depending on $\lambda,T$, such that 
    \begin{align*}
        &\E\left[\sup_{t\in[0,T]}|Y^1_t-Y^2_t|^2+\int_0^T |Z^1_t-Z^2_t|^2dt+\sup_{t\in[0,T]}|K^1_t-K^2_t|^2\right]\\
        &\leq C\E\left[|\hat{\xi}|^2+\int_0^T|f^1(s,Y^2_s,Z^2_s)-f^2(s,Y^2_s,Z^2_s)|^2ds\right].
    \end{align*}
\end{proposition}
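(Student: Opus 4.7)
Write $\hat Y = Y^1 - Y^2$, $\hat Z = Z^1 - Z^2$, $\hat K = K^1 - K^2$, $\hat\xi = \xi^1 - \xi^2$, and set $\Delta f_s := f^1(s,Y^2_s,Z^2_s) - f^2(s,Y^2_s,Z^2_s)$. The difference satisfies
\[
\hat Y_t = \hat\xi + \int_t^T [f^1(s,Y^1_s,Z^1_s) - f^2(s,Y^2_s,Z^2_s)]\,ds - \int_t^T \hat Z_s\,dB_s + \hat K_T - \hat K_t.
\]
The crucial structural observation is that both solutions use the same barriers $L$ and $U$, so in the pathwise representation of Proposition \ref{repK} the quantities $l^\omega$ and $u^\omega$ coincide for $K^1$ and $K^2$. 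My plan is: (i) use this together with the Skorokhod stability \eqref{diffk} to control $\sup_t|\hat K_t|$; (ii) run an It\^o argument on $|\hat Y|^2$, with the $\int \hat Y\,d\hat K$ cross term shown to have nonpositive expectation via the Skorokhod slackness conditions combined with the $\mathbb{G}$-predictability of $K^{i,\pm}$; (iii) promote the pointwise estimate for $\hat Y$ to a $\sup$-norm estimate via Doob.

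For step (i), applying Proposition \ref{repK} to both solutions and invoking \eqref{diffk} gives the pathwise bound $\sup_{t}|\hat K_t(\omega)| \leq C \sup_{t}|\tilde x^{1,\omega}_t - \tilde x^{2,\omega}_t|$ since the barrier terms cancel. The right-hand side is dominated by $|\E[\hat\xi\mid\mathcal G_T](\omega)| + 2\sup_t|\E[\Phi_t\mid\mathcal G_t](\omega)|$, where $\Phi_t := \int_0^t[f^1(s,Y^1,Z^1)-f^2(s,Y^2,Z^2)]\,ds - \int_0^t \hat Z_s\,dB_s$. Since $|\E[\Phi_t\mid\mathcal G_t]|\le \E[\sup_s|\Phi_s|\mid\mathcal G_t]$, Doob's inequality applied to this $\mathbb{G}$-martingale, together with BDG for the stochastic integral and the Lipschitz bound $|f^1(s,Y^1,Z^1)-f^2(s,Y^2,Z^2)| \leq \lambda(|\hat Y_s|+|\hat Z_s|)+|\Delta f_s|$, delivers
\[
\E\Big[\sup_{t\in[0,T]}|\hat K_t|^2\Big] \leq C\,\E\Big[|\hat\xi|^2 + \int_0^T|\Delta f_s|^2\,ds + \int_0^T(|\hat Y_s|^2+|\hat Z_s|^2)\,ds\Big].
\]

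For step (ii), It\^o's formula applied to $|\hat Y|^2$ yields, after expectation,
\[
\E[|\hat Y_t|^2] + \E\Big[\int_t^T|\hat Z_s|^2\,ds\Big] = \E[|\hat\xi|^2] + 2\E\Big[\int_t^T\hat Y_s[f^1-f^2]\,ds\Big] + 2\E\Big[\int_t^T\hat Y_s\,d\hat K_s\Big].
\]
Since each $K^{i,\pm}$ is continuous and $\mathbb{G}$-adapted, hence $\mathbb{G}$-predictable, the tower property gives $\E[\int \hat Y_s\,dK^{i,\pm}_s] = \E[\int \E[\hat Y_s\mid\mathcal G_s]\,dK^{i,\pm}_s]$. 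A four-term sign check, using for each solution the Skorokhod slackness $\int\E[Y^i-L\mid\mathcal G]\,dK^{i,+} = \int\E[U-Y^i\mid\mathcal G]\,dK^{i,-} = 0$ together with $\E[Y^j-L\mid\mathcal G]\geq 0$ and $\E[U-Y^j\mid\mathcal G]\geq 0$ for $j\neq i$, then shows that $\E[\int_t^T\hat Y_s\,d\hat K_s]\leq 0$. Standard Young/Lipschitz estimates on the drift cross term followed by Gronwall's lemma produce
\[
\sup_{t\in[0,T]}\E[|\hat Y_t|^2] + \E\Big[\int_0^T|\hat Z_s|^2\,ds\Big] \leq C\,\E\Big[|\hat\xi|^2 + \int_0^T|\Delta f_s|^2\,ds\Big].
\]

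Finally, for step (iii), taking $\mathcal F_t$-conditional expectation in the BSDE expresses $\hat Y_t$ as an $\mathbb{F}$-conditional expectation, and Doob's maximal inequality gives
\[
\E\Big[\sup_{t\in[0,T]}|\hat Y_t|^2\Big] \leq C\,\E\Big[|\hat\xi|^2 + \int_0^T|f^1-f^2|^2\,ds + \sup_{t\in[0,T]}|\hat K_t|^2\Big],
\]
and substituting the bounds from (i) and (ii) closes the argument. The main technical obstacle is the $\int\hat Y\,d\hat K$ cross term in (ii): the total variation of $\hat K$ admits no useful small bound in terms of the data, so a naive It\^o approach fails, and it is the $\mathbb{G}$-predictability of $K^{i,\pm}$ combined with the Skorokhod slackness of both solutions that rescues the estimate.
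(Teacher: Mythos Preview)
Your proof is correct and follows essentially the same approach as the paper: the It\^o estimate with the $\int\hat Y\,d\hat K$ cross term disposed of via the $\mathbb{G}$-adaptedness of $K^{i,\pm}$ and the Skorokhod slackness conditions, the pathwise control of $\sup_t|\hat K_t|$ via Proposition~\ref{repK} and the Lipschitz stability \eqref{diffk} (which is exactly where the hypothesis that both solutions share the same barriers enters), and the upgrade to $\E[\sup_t|\hat Y_t|^2]$ by writing $\hat Y_t$ as an $\mathbb{F}$-conditional expectation and applying Doob. The only cosmetic differences are that the paper runs It\^o on $e^{\beta t}|\hat Y_t|^2$ with a tuned $\beta$ instead of invoking Gronwall, and takes the estimate conditionally on $\mathcal G_t$ rather than in plain expectation; neither changes the substance.
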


\begin{proof}
    For simplicity, we denote $\hat{A}=A^1-A^2$ for $A=Y,Z,K,\xi$ and
    \begin{align*}
        \hat{f}(s)&=f^1(s,Y^1_s,Z^1_s)-f^2(s,Y^2_s,Z^2_s), \\
        \hat{f}^1(s)&=f^1(s,Y^1_s,Z^1_s)-f^1(s,Y^2_s,Z^2_s),\\
        \hat{f}(s,Y^2_s,Z^2_s)&=f^1(s,Y^2_s,Z^2_s)-f^2(s,Y^2_s,Z^2_s).
    \end{align*}
    For any $\beta>0$, applying It\^{o}'s formula to $e^{\beta t}|\hat{Y}_t|^2$, we obtain that 
    \begin{equation}\label{e2.5}
    \begin{split}
        &e^{\beta t}|\hat{Y}_t|^2+\int_t^T e^{\beta s}(\beta |\hat{Y}_s|^2+|\hat{Z}_s|^2)ds\\
        =&e^{\beta T}|\hat{\xi}|^2+2\int_t^T e^{\beta s}\hat{f}(s)\hat{Y}_s ds+2\int_t^T e^{\beta s}\hat{Y}_sd\hat{K}_s-2\int_t^T e^{\beta s}\hat{Y}_s\hat{Z}_sdB_s\\
        \leq &e^{\beta T}|\hat{\xi}|^2+\int_t^T e^{\beta s}|\hat{f}(s,Y^2_s,Z^2_s)|^2 ds+(1+2\lambda+8\lambda^2)\int_t^T e^{\beta s}|\hat{Y}_s|^2\\
        &+\frac{1}{8}\int_t^T e^{\beta s}|\hat{Z}_s|^2ds+2\int_t^T e^{\beta s}\hat{Y}_sd\hat{K}_s-2\int_t^T e^{\beta s}\hat{Y}_s\hat{Z}_sdB_s.
    \end{split}
    \end{equation}
    Simple calculation yields that 
    \begin{equation}\label{e2.5'}\begin{split}
        &\E\left[\int_t^T e^{\beta s}\hat{Y}_sd\hat{K}_s\Big|\mathcal{G}_t\right]=\E\left[\int_t^T e^{\beta s}\E[\hat{Y}_s|\mathcal{G}_s]d\hat{K}_s\Big|\mathcal{G}_t\right]\\
        =&\E\left[\int_t^T e^{\beta s}\E[Y_s^1-L_s|\mathcal{G}_s]dK^{1,+}_s\Big|\mathcal{G}_t\right]+\E\left[\int_t^T e^{\beta s}\E[L_s-Y_s^2|\mathcal{G}_s]dK^{1,+}_s\Big|\mathcal{G}_t\right]\\
        &-\E\left[\int_t^T e^{\beta s}\E[Y_s^1-U_s|\mathcal{G}_s]dK^{1,-}_s\Big|\mathcal{G}_t\right]-\E\left[\int_t^T e^{\beta s}\E[U_s-Y_s^2|\mathcal{G}_s]dK^{1,-}_s\Big|\mathcal{G}_t\right]\\
        &-\E\left[\int_t^T e^{\beta s}\E[Y_s^1-L_s|\mathcal{G}_s]dK^{2,+}_s\Big|\mathcal{G}_t\right]-\E\left[\int_t^T e^{\beta s}\E[L_s-Y_s^2|\mathcal{G}_s]dK^{2,+}_s\Big|\mathcal{G}_t\right]\\
        &+\E\left[\int_t^T e^{\beta s}\E[Y_s^1-U_s|\mathcal{G}_s]dK^{2,-}_s\Big|\mathcal{G}_t\right]+\E\left[\int_t^T e^{\beta s}\E[U_s-Y_s^2|\mathcal{G}_s]dK^{2,-}_s\Big|\mathcal{G}_t\right]\leq 0.
    \end{split}
    \end{equation}
    Choosing $\beta=2+2\lambda+8\lambda^2$, it follows from \eqref{e2.5} and \eqref{e2.5'} that 
    \begin{equation}\label{e2.4}
        \E\left[|\hat{Y}_t|^2+\int_t^T (|\hat{Y}_s|^2+|\hat{Z}_s|^2)ds\Big |\mathcal{G}_t\right]\leq C\E\left[|\hat{\xi}|^2+\int_t^T |\hat{f}(s,Y^2_s,Z^2_s)|^2ds \Big|\mathcal{G}_t\right].
    \end{equation}

    By the proof of Proposition \ref{repK} and \eqref{diffk}, we have
    \begin{equation*}
    \begin{split}
        \sup_{t\in[0,T]}|\hat{K}_t|\leq C|\E[\hat{\xi}|\mathcal{G}_T]|+C\sup_{t\in[0,T]}\Bigg|&\E\left[ \left(\int_0^T \hat{f}(s)ds-\int_0^T\hat{Z}_s dB_s\right)\Big|\mathcal{G}_T \right]\\ 
        &-\E\left[ \left(\int_0^t \hat{f}(s)ds-\int_0^t\hat{Z}_s dB_s\right)\Big|\mathcal{G}_t \right]\Bigg|.
    \end{split}\end{equation*}
    It follows from the Doob martingale inequality, the Burkholder-Davis-Gundy inequality, the H\"{o}lder inequality and \eqref{e2.4} that 
    \begin{align}\label{e2.11}
        \E\left[\sup_{t\in[0,T]}|\hat{K}_t|^2\right]\leq C \left\{\E[|\hat{\xi}|^2]+\E\left[\int_0^T|\hat{f}(s,Y^2_s,Z^2_s)|^2ds\right]\right\}.
    \end{align}
    Note that 
    \begin{align*}
        \hat{Y}_t=\E[\hat{\xi}|\mathcal{F}_t]+\E\left[\int_t^T \hat{f}^1(s)+\hat{f}(s,Y^2_s,Z^2_s)ds\Big|\mathcal{F}_t\right]+\E[\hat{K}_T-\hat{K}_t|\mathcal{F}_t].
    \end{align*}
    Applying \eqref{e2.4} and \eqref{e2.11}, we obtain that 
    \begin{align}\label{e2.11'}
        \E\left[\sup_{t\in[0,T]}|\hat{Y}_t|^2\right]\leq C\E\left[|\hat{\xi}|^2+\int_0^T|\hat{f}(s,Y^2_s,Z^2_s)|^2ds\right].
    \end{align}
    Combining \eqref{e2.4}, \eqref{e2.11} and \eqref{e2.11'}, we obtain the desired result.
\end{proof}

Proposition \ref{thm2.3} implies the following uniqueness result directly. 

\begin{theorem}\label{uniqueness}
    Let the parameters $(\xi,f,L,U)$ satisfy Assumptions (H1)-(H3). Then, the doubly conditional reflected BSDE \eqref{nonlinearyz} has at most a solution $(Y,Z,K)\in \mathcal{S}^2\times\mathcal{H}^2\times \mathcal{BV}_{\mathbb{G}}^2$.
\end{theorem}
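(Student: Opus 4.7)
The plan is to read off the uniqueness theorem as an immediate consequence of the a priori estimate in Proposition \ref{thm2.3}. Suppose $(Y^1,Z^1,K^1)$ and $(Y^2,Z^2,K^2)$ are both solutions to \eqref{nonlinearyz} associated with the same quadruple $(\xi,f,L,U)$. Then one may regard each $(Y^i,Z^i,K^i)$ as the solution corresponding to the parameters $(\xi^i,f^i,L,U):=(\xi,f,L,U)$, $i=1,2$, so that Proposition \ref{thm2.3} applies verbatim.

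With $\xi^1=\xi^2=\xi$ and $f^1=f^2=f$, the bound in Proposition \ref{thm2.3} becomes
\begin{equation*}
\E\!\left[\sup_{t\in[0,T]}|Y^1_t-Y^2_t|^2+\int_0^T |Z^1_t-Z^2_t|^2dt+\sup_{t\in[0,T]}|K^1_t-K^2_t|^2\right]\leq 0,
\end{equation*}
since the right-hand side of the estimate involves $\E[|\xi^1-\xi^2|^2]=0$ and $\int_0^T|f^1(s,Y^2_s,Z^2_s)-f^2(s,Y^2_s,Z^2_s)|^2ds=0$. Consequently, $Y^1=Y^2$ in $\mathcal{S}^2$, $Z^1=Z^2$ in $\mathcal{H}^2$, and $K^1=K^2$ in $\mathcal{S}^2$ (pathwise, by continuity), which is the desired uniqueness.

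Since the entire argument reduces to invoking Proposition \ref{thm2.3}, there is no genuine obstacle; the substantive work has already been carried out in the derivation of the a priori estimate, where the pointwise representation of $K$ from Proposition \ref{repK} together with the Lipschitz-stability estimate \eqref{diffk} for the Skorokhod map on a time-dependent interval were used to control $\sup_t|\hat{K}_t|^2$ by $\hat{\xi}$ and the driver difference. No further ingredients (such as separate handling of $K^+$ and $K^-$, or any regularity of $\mathbb{G}$ beyond what was already used) are needed at the uniqueness step.
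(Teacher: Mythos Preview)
Your proof is correct and matches the paper's own argument exactly: the paper simply states that Proposition \ref{thm2.3} implies the uniqueness result directly, which is precisely what you do by specializing the a priori estimate to $\xi^1=\xi^2$ and $f^1=f^2$.
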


In order to obtain the existence result for the doubly conditional  reflected BSDE, we need to propose the following condition for the obstacles, known as the Mokobodski condition (see \cite{CK,HL,PX}). 
\begin{itemize}
    \item[(H4)] There exist two nonnegative $\mathbb{F}$-supermartingale $H^1,H^2\in \mathcal{S}^2$ such that for any $t\in[0,T]$,
    \begin{align*}
        L_t\leq H^1_t-H^2_t\leq U_t.
    \end{align*}
\end{itemize}

\begin{remark}
    Let $L'_t=\E[L_t|\mathcal{G}_t]$, $U'_t=\E[U_t|\mathcal{G}_t]$. Set $H^{1'}_t=\E[H^1_t|\mathcal{G}_t]$, $H^{2'}_t=\E[H^2_t|\mathcal{G}_t]$. Simple calculation yields that for $i=1,2$, $0\leq s\leq t\leq T$, 
    \begin{align*}
        \E[H^{i'}_t|\mathcal{G}_s]=\E[H^i_t|\mathcal{G}_s]=\E[\E[H^i_t|\mathcal{F}_s]|\mathcal{G}_s]\leq \E[H^i_s|\mathcal{G}_s]=H^{i'}_s,
    \end{align*}
    which implies that $H^{i'}$, $i=1,2$ are nonnegative $\mathbb{G}$-supermartingale. Then, under (H4), the Mokobodski condition holds for $L',U'$, that is, there exist two nonnegative $\mathbb{G}$-supermartingale $H^{1'},H^{2'}\in \mathcal{S}^2$ such that for any $t\in[0,T]$,
    \begin{align*}\label{constraints}
        L'_t\leq H^{1'}_t-H^{2'}_t\leq U'_t.
    \end{align*}
\end{remark}

We first construct the solution to the doubly conditional reflected BSDE whose driver $f$ does not depend on $(y,z)$. Recalling the single reflected case studied in \cite{HHL}, the construction for the solution is based on the Snell envelope approach, i.e., the optimal stopping problem with partial information. For the doubly reflected case, a natural candidate for the construction is the Dynkin game with partial information. %{\color{red}The following proposition should be checked carefully.}

\begin{proposition}\label{prop2.6}
   Let $f\in\mathcal{H}^2$. Under Assumptions (H2)-(H4), there exists a unique solution $(Y,Z,K)\in \mathcal{S}^2\times \mathcal{H}^2\times \mathcal{BV}^2_{\mathbb{G}}$ to the following doubly conditional  reflected BSDE
   \begin{equation*}\label{nonlinearnoyz}
\begin{cases}
Y_t=\xi+\int_t^T f(s)ds-\int_t^T Z_s dB_s+K_T-K_t, \\
\E[L_t|\mathcal{G}_t]\leq \E[Y_t|\mathcal{G}_t]\leq \E[U_t|\mathcal{G}_t], \\
K_t=K^+_t-K^-_t, \ K^+,K^-\in \mathcal{A}^2_{\mathbb{G}},\\
\int_0^T \E[Y_t-L_t|\mathcal{G}_t]dK_t^+=\int_0^T \E[U_t-L_t|\mathcal{G}_t]dK^-_t=0.
\end{cases}
\end{equation*}
\end{proposition}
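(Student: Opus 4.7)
Uniqueness is immediate from Theorem \ref{uniqueness}. For existence, the plan is to build $(Y,Z,K)$ via a Dynkin game on the subfiltration $\mathbb{G}$, applying the framework of Section 2 with $\mathbb{G}$ in the role of $\mathbb{F}$, and then recovering $(Y,Z)$ by the Brownian martingale representation.

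First, I reduce to the driverless case via the shift $\tilde Y_t = Y_t + \int_0^t f(s)\,ds$, $\tilde L_t = L_t + \int_0^t f(s)\,ds$, and analogously for $\tilde U, \tilde \xi$. The equation becomes $\tilde Y_t = \tilde\xi + K_T - K_t - \int_t^T Z_s\,dB_s$; the Skorokhod conditions are unchanged (since $Y-L = \tilde Y - \tilde L$, and similarly for $U$), and the Mokobodski condition (H4) remains valid (replace $H^1$ by $H^1 + \int_0^\cdot f\,ds$).

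Next, on $\mathcal{T}^{\mathbb{G}}$ I introduce the reward families
$$\bar\xi(\theta) = \E[\tilde L_\theta|\mathcal{G}_\theta]\,I_{\{\theta<T\}} + \E[\tilde\xi|\mathcal{G}_T]\,I_{\{\theta=T\}}, \quad \bar\zeta(\theta) = \E[\tilde U_\theta|\mathcal{G}_\theta]\,I_{\{\theta<T\}} + \E[\tilde\xi|\mathcal{G}_T]\,I_{\{\theta=T\}},$$
and normalize them by subtracting the $\mathbb{G}$-martingale $\E[\tilde\xi|\mathcal{G}_\cdot]$ so that they lie in $\mathcal{S}^\pm$ with vanishing terminal value, as in the remark following the definition of the Dynkin game in Section 2. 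The remark after (H4), applied with $H^{i,\mathbb{G}}_t := \E[H^i_t|\mathcal{G}_t]$, lifts the Mokobodski condition to the $\mathbb{G}$-level; then Theorem \ref{theorem2.3} and Proposition \ref{proposition5.2} (applied on $\mathbb{G}$) produce $\mathbb{G}$-supermartingale families $J, J' \in \mathcal{S}^2$, and I set $V := J - J' + \E[\tilde\xi|\mathcal{G}_\cdot]$, which by Proposition \ref{proposition2.5} satisfies $\E[\tilde L_\theta|\mathcal{G}_\theta] \le V(\theta) \le \E[\tilde U_\theta|\mathcal{G}_\theta]$. The left-quasi-continuity of $\mathbb{G}$ together with the continuity of $L, U, \int_0^\cdot f\,ds$ makes the reward families continuous, so by Remark \ref{r2.11} the families $V, J, J'$ aggregate into continuous $\mathbb{G}$-adapted processes.

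Finally, from the Doob--Meyer decompositions $J_t = N^1_t - A^1_t$, $J'_t = N^2_t - A^2_t$ on $\mathbb{G}$ (the $A^i$ being continuous nondecreasing since $J, J'$ are), I set $K^+ = A^1$, $K^- = A^2 \in \mathcal{A}^2_{\mathbb{G}}$, $K = K^+ - K^-$. Defining the $\mathbb{F}$-martingale $M_t := \E[\tilde\xi + K_T|\mathcal{F}_t]$ and using Brownian martingale representation to extract $Z \in \mathcal{H}^2$ with $M_t = M_0 + \int_0^t Z_s\,dB_s$, I put $\tilde Y_t := M_t - K_t$ and $Y_t := \tilde Y_t - \int_0^t f(s)\,ds$. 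The BSDE then holds by construction. The identity $\E[\tilde Y_t|\mathcal{G}_t] = \E[\tilde\xi + K_T|\mathcal{G}_t] - K_t = V_t$ — obtained by matching the $\mathbb{G}$-martingale parts of $V$ and of $\E[\tilde Y|\mathcal{G}]$ through their common terminal value $\E[\tilde\xi + K_T|\mathcal{G}_T]$, using the $\mathcal{G}_T$-measurability of $K_T$ — recovers the reflection constraints, and the Skorokhod conditions follow from the minimality of $J, J'$ in Theorem \ref{theorem2.3}, which forces $dK^+ = dA^1$ to live on $\{J = J'+\bar\xi\} = \{V = \E[\tilde L|\mathcal{G}]\}$ and $dK^-$ on $\{V = \E[\tilde U|\mathcal{G}]\}$. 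The main obstacle I expect is precisely extracting this two-barrier Snell-envelope-type support characterization from the abstract minimality in the general-filtration Dynkin game of Section 2, together with confirming the $\mathcal{S}^2$-estimate on $K$ via the last statement of Proposition \ref{proposition5.2}.
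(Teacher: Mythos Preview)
Your outline is essentially the paper's own construction: a Dynkin game on $\mathbb{G}$ with rewards $\E[L_\cdot+\int_0^\cdot f\,|\,\mathcal{G}_\cdot]$ and $\E[U_\cdot+\int_0^\cdot f\,|\,\mathcal{G}_\cdot]$ (modulo the terminal correction), the coupled Snell envelopes $J,J'$ from Theorem~\ref{theorem2.3}, their Doob--Meyer decompositions giving $K^\pm$, and then the BSDE for $(Y,Z)$ with this $K$. Your shift $\tilde Y=Y+\int_0^\cdot f$ is cosmetically different from the paper's choice to absorb $\int_0^\cdot f$ into the reward, but the two are equivalent. Your identification $\E[\tilde Y_t|\mathcal{G}_t]=V_t$ via the terminal value $J_T=J'_T=0$ is exactly the paper's computation \eqref{yk}--\eqref{barYY'}.

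The one point to correct: the Skorokhod condition does \emph{not} follow from the minimality statement in Theorem~\ref{theorem2.3}. Minimality compares the pair $(J,J')$ with other supermartingale pairs solving the coupled system; it says nothing about where $dA^1$ is supported. What you need is the flat-off property of a \emph{single} Snell envelope: if $J=\mathcal{R}(\phi)$ with Doob--Meyer decomposition $J=N-A$, then $\int (J-\phi)\,dA=0$. The paper invokes this as Proposition~B.11 of \cite{KQ} to obtain \eqref{flatoff}. Applied separately to $J=\mathcal{R}(J'+\bar\xi)$ and $J'=\mathcal{R}(J-\bar\zeta)$, it gives $dK^+$ supported on $\{J=J'+\bar\xi\}$ and $dK^-$ on $\{J'=J-\bar\zeta\}$, which translates to the Skorokhod conditions for $Y$. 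So the obstacle you flagged is real but has a standard resolution; just cite the correct Snell-envelope lemma rather than minimality. The $\mathcal{S}^2$-bound on $K^\pm$ then comes, as you suspected, from the last clause of Proposition~\ref{proposition5.2} together with the Doob--Meyer estimate.
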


\begin{proof}
     For any $t\in[0,T]$, let $\mathcal{T}_{t,T}^{\mathbb{G}}$ be the collection of $\mathbb{G}$-adapted stopping times taking values in $[t,T]$. For simplicity, we omit the superscript $\mathbb{G}$ in the proof. For any $\sigma,\tau\in \mathcal{T}_{t,T}$, consider the payoff
     \begin{align*}\label{Rsigmatau}
         R_t(\sigma,\tau):=\int_t^{\sigma\wedge \tau} f(u)du+\xi I_{\{\sigma\wedge \tau=T\}}+L_\tau I_{\{\tau<T,\tau\leq \sigma\}}+U_\sigma I_{\{\sigma<\tau\}}
     \end{align*}
     and the upper and lower value of the associated Dynkin game
     \begin{equation}\label{dynkin1}\begin{split}
         &\bar{V}_t:=\essinf_{\sigma\in\mathcal{T}_{t,T}}\esssup_{\tau\in\mathcal{T}_{t,T}}\E[R_t(\sigma,\tau)|\mathcal{G}_t],\\
         &\underline{V}_t:=\esssup_{\tau\in\mathcal{T}_{t,T}}\essinf_{\sigma\in\mathcal{T}_{t,T}}\E[R_t(\sigma,\tau)|\mathcal{G}_t].
     \end{split}\end{equation}
     Set $\widetilde{R}_t(\sigma,\tau)=\widetilde{L}_\tau I_{\{\tau\leq \sigma\}}+\widetilde{U}_\sigma I_{\{\sigma<\tau\}}$, where
     \begin{align*}
         &\widetilde{L}_t=\E\left[\int_0^t f(u)du+L_t I_{\{t<T\}}+\xi I_{\{t=T\}}\Big|\mathcal{G}_t\right],\\
         &\widetilde{U}_t=\E\left[\int_0^t f(u)du+U_tI_{\{t<T\}}+\xi I_{\{t=T\}}\Big|\mathcal{G}_t\right].
     \end{align*}
     It is easy to check that 
     \begin{equation}\label{dynkin2}\begin{split}
         &\bar{V}'_t:=\bar{V}_t+\E\left[\int_0^t f(u)du\Big|\mathcal{G}_t\right]=\essinf_{\sigma\in\mathcal{T}_{t,T}}\esssup_{\tau\in\mathcal{T}_{t,T}}\E[\widetilde{R}_t(\sigma,\tau)|\mathcal{G}_t],\\
         &\underline{V}'_t:=\underline{V}_t+\E\left[\int_0^t f(u)du\Big|\mathcal{G}_t\right]=\esssup_{\tau\in\mathcal{T}_{t,T}}\essinf_{\sigma\in\mathcal{T}_{t,T}}\E[\widetilde{R}_t(\sigma,\tau)|\mathcal{G}_t].
     \end{split}\end{equation}
     By Theorem \ref{theorem2.3}, there exist two $\mathbb{G}$-supermartingale $J^1,J^2$ such that
     \begin{align*}
         J^1_t=\esssup_{\tau\in \mathcal{T}_{t,T}}\E[J^2_\tau+\widetilde{L}_\tau|\mathcal{G}_t], \ J^2_t=\esssup_{\tau\in \mathcal{T}_{t,T}}\E[J^1_\tau-\widetilde{U}_\tau|\mathcal{G}_t].
     \end{align*}
     By Proposition \ref{proposition5.2} and Remark \ref{r2.11}, we have $J^i\in \mathcal{S}^2$, $i=1,2$. Let $K^+,K^-\in\mathcal{A}_{\mathbb{G}}^2$ be the nondecreasing term in the Doob-Meyer decomposition of $J^1,J^2$, respectively. By Proposition B.11 in \cite{KQ}, we have 
     \begin{equation}\label{flatoff}
         \begin{split}
             \int_0^T (J^1_t-(J^2_t+\widetilde{L}_t))dK^+_t=\int_0^T(J^2_t-(J^1_t-\widetilde{U}_t))dK^-_t=0.
         \end{split}
     \end{equation}
     By Proposition \ref{proposition5.2} and Theorem \ref{theorem3.6}, the value of the Dynkin game \eqref{dynkin2} exists, denoted by $Y'$. Besides, we have
     \begin{align}\label{reprentationY'}
         Y'_t=J^1_t-J^2_t.
     \end{align}
     Proposition \ref{proposition2.5} implies that for any $t\in[0,T]$,
     \begin{align}\label{tildeLYtileU}
         \widetilde{L}_t\leq Y'_t\leq \widetilde{U}_t.
     \end{align}
     Set $K_t=K^+_t-K^-_t$. It follows from \eqref{reprentationY'} and the definition of $K^+,K^-$ that 
     \begin{align}\label{yk}
         \E[Y'_t-Y'_T|\mathcal{G}_t]=\E[K_T-K_t|\mathcal{G}_t].
     \end{align}
     Since the value of the Dynkin game \eqref{dynkin2} exists, the Dynkin game \eqref{dynkin1} is fair, i.e., we have 
     \begin{align*}
         \bar{V}_t=\underline{V}_t=:\bar{Y}_t.
     \end{align*}
     Noting that for any $t\in[0,T]$,
     \begin{align}\label{Y'Y}
         Y'_t=\bar{Y}_t+\E\left[\int_0^t f(u)du\Big|\mathcal{G}_t\right],
     \end{align}
     which together with \eqref{tildeLYtileU} yields that for any $t\in[0,T]$
     \begin{align}\label{LYU}
         \E[L_t|\mathcal{G}_t]\leq \bar{Y}_t\leq \E[U_t|\mathcal{G}_t].
     \end{align}
     Furthermore, we have
     \begin{align}\label{barYY'}
         \bar{Y}_t=\bar{Y}_T+\E\left[\int_0^T f(u)du\Big|\mathcal{G}_T\right]-\E\left[\int_0^t f(u)du\Big|\mathcal{G}_t\right]+Y'_t-Y'_T.
     \end{align}
     Since $\bar{Y}$ is $\mathbb{G}$-adpated and $\bar{Y}_T=\E[\xi|\mathcal{G}_T]$, taking conditional expectations on both sides of \eqref{barYY'} and applying \eqref{yk} yield that 
     \begin{align*}
         \bar{Y}_t=\E\left[\xi+\int_t^T f(u)du+K_T-K_t\Big|\mathcal{G}_t\right].
     \end{align*}

     Let $(Y,Z)\in \mathcal{S}^2\times \mathcal{H}^2$ be the solution to the following BSDE
     \begin{align*}
         Y_t=\xi+\int_t^T f(u)du+K_T-K_t-\int_t^T Z_udB_u.
     \end{align*}
     We claim that $(Y,Z,K)$ is the solution to conditional doubly reflected BSDE. In fact, it is easy to check that $\E[Y_t|\mathcal{G}_t]=\bar{Y}_t$. Then, \eqref{LYU} is equivalent to  the desired constraints
     \begin{align*}
         \E[L_t|\mathcal{G}_t]\leq \E[Y_t|\mathcal{G}_t]\leq \E[U_t|\mathcal{G}_t], \textrm{ for all }t\in[0,T].
     \end{align*}
     It remains to prove the Skorokhod conditions hold. By Eqs. \eqref{flatoff}, \eqref{reprentationY'} and \eqref{Y'Y}, we have
     \begin{align*}
         &\int_0^T (\E[Y_t|\mathcal{G}_t]-\E[L_t|\mathcal{G}_t])dK^+_t=\int_0^T (Y'_t-\widetilde{L}_t)dK^+_t=0,\\
          &\int_0^T (\E[L_t|\mathcal{G}_t]-\E[Y_t|\mathcal{G}_t])dK^-_t=\int_0^T (\widetilde{U}_t-Y'_t)dK^-_t=0.
     \end{align*}
     The proof is complete.
\end{proof}

Now, we are ready to introduce the main result of this section.

\begin{theorem}\label{thm2.7}
    Suppose that the parameters $(\xi,f,L,U)$ satisfy (H1)-(H4). Then, the doubly conditional reflected BSDE has a unique solution $(Y,Z,K)\in \mathcal{S}^2\times \mathcal{H}^2\times \mathcal{BV}^2_{\mathbb{G}}$.
\end{theorem}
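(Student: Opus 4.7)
The plan is to handle uniqueness by direct appeal to Theorem \ref{uniqueness} and then establish existence by a Picard fixed point argument whose building block is Proposition \ref{prop2.6}. Uniqueness is immediate, so the task reduces to producing some solution.

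First, I would define a map $\Phi:\mathcal{S}^2\times\mathcal{H}^2\to\mathcal{S}^2\times\mathcal{H}^2$ by setting $\Phi(y,z):=(Y,Z)$, where $(Y,Z,K)\in\mathcal{S}^2\times\mathcal{H}^2\times\mathcal{BV}^2_{\mathbb{G}}$ is the solution given by Proposition \ref{prop2.6} to the doubly conditional reflected BSDE with parameters $(\xi,\tilde f,L,U)$ and frozen driver $\tilde f(s):=f(s,y_s,z_s)$. The Lipschitz condition (H1) together with $(y,z)\in\mathcal{S}^2\times\mathcal{H}^2$ ensures $\tilde f\in\mathcal{H}^2$, so Proposition \ref{prop2.6} applies; uniqueness of the image $(Y,Z)$ is already guaranteed by Theorem \ref{uniqueness}. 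Any fixed point of $\Phi$, together with its associated $K$, is exactly a solution of \eqref{nonlinearyz}.

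Second, I would show that $\Phi$ is a contraction in a suitably weighted norm. Taking $(y^i,z^i)$ for $i=1,2$ and setting $(Y^i,Z^i):=\Phi(y^i,z^i)$, the a priori estimate of Proposition \ref{thm2.3}, applied with common $\xi$, common obstacles $L,U$, and drivers $f^i(s,\cdot,\cdot):=f(s,y^i_s,z^i_s)$, combined with the Lipschitz property of $f$, yields
\begin{align*}
\E\Bigl[\sup_{t\in[0,T]}|Y^1_t-Y^2_t|^2+\int_0^T|Z^1_t-Z^2_t|^2\,dt\Bigr]
\leq 2C\lambda^2\,\E\int_0^T\bigl(|y^1_s-y^2_s|^2+|z^1_s-z^2_s|^2\bigr)ds.
\end{align*}
To upgrade this to a strict contraction, I would rerun the It\^o computation that produced \eqref{e2.5}--\eqref{e2.4} with an exponential weight $e^{\beta s}$ in place, obtaining a bound of the form $\|(Y^1-Y^2,Z^1-Z^2)\|_\beta^2\leq (C'/\beta)\,\|(y^1-y^2,z^1-z^2)\|_\beta^2$ relative to the norm $\|(Y,Z)\|_\beta^2:=\E\int_0^T e^{\beta s}(|Y_s|^2+|Z_s|^2)\,ds$. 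Choosing $\beta$ large then makes $\Phi$ a strict contraction on the Banach space $(\mathcal{H}^2\times\mathcal{H}^2,\|\cdot\|_\beta)$, and Banach's fixed point theorem produces a unique fixed point $(Y,Z)$; the associated $K$ from Proposition \ref{prop2.6} supplies the third component.

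The hard part will be carrying out the weighted It\^o expansion while preserving the crucial nonpositivity of the reflection term computed in \eqref{e2.5'}, since the a priori bound of Proposition \ref{thm2.3} mixes an $\mathcal{S}^2$-norm on $Y$ with an $\mathcal{H}^2$-norm on $Z$ and a naive subdivision of $[0,T]$ shrinks only the $y$-contribution. Fortunately, the pathwise Skorokhod flatness conditions are insensitive to multiplication by $e^{\beta s}$, so the computation in \eqref{e2.5'} transfers verbatim with each integrand multiplied by the weight, and the remainder is a routine Gronwall argument and an application of the Burkholder--Davis--Gundy inequality, mirroring the structure already used in the proof of Proposition \ref{thm2.3}.
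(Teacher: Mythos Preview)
Your proposal is correct and follows essentially the same route as the paper: freeze $(y,z)$, invoke Proposition \ref{prop2.6} to define $\Phi$, run the weighted It\^o expansion (with the same observation that the computation in \eqref{e2.5'} survives the weight $e^{\beta s}$), and choose $\beta$ large to obtain a contraction on $(\mathcal{H}^2\times\mathcal{H}^2,\|\cdot\|_\beta)$. The one step you leave slightly implicit is the upgrade of the fixed point from $\mathcal{H}^2\times\mathcal{H}^2$ to $\mathcal{S}^2\times\mathcal{H}^2$: the paper handles this not purely via BDG but by combining the Skorokhod representation of Proposition \ref{repK} with \eqref{diffk} and Doob's inequality to get $\E[\sup_t|\hat Y_t|^2]\leq C\,\E\int_0^T(|\hat U_s|^2+|\hat V_s|^2)\,ds$, so that the Picard iterates are Cauchy in $\mathcal{S}^2$ as well; your reference to ``mirroring the structure already used in the proof of Proposition \ref{thm2.3}'' covers this, since that proposition uses exactly this device.
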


\begin{proof}
    For any given $(U,V)\in \mathcal{S}^2\times \mathcal{H}^2$, by Proposition \ref{prop2.6}, there exists a unique solution $(Y,Z,K)\in \mathcal{S}^2\times \mathcal{H}^2\times \mathcal{BV}^2_{\mathbb{G}}$  to the following equation
    \begin{displaymath}
\begin{cases}
Y_t=\xi+\int_t^T f(s,U_s,V_s)ds-\int_t^T Z_s dB_s+K_T-K_t, \\
\E[L_t|\mathcal{G}_t]\leq \E[Y_t|\mathcal{G}_t]\leq \E[U_t|\mathcal{G}_t], \\
K_t=K^+_t-K^-_t, \ K^+,K^-\in \mathcal{A}^2_{\mathbb{G}},\\
\int_0^T \E[Y_t-L_t|\mathcal{G}_t]dK_t^+=\int_0^T \E[U_t-L_t|\mathcal{G}_t]dK^-_t=0.
\end{cases}
\end{displaymath}
We define a mapping $\Phi:\mathcal{S}^2\times\mathcal{H}^2\rightarrow\mathcal{S}^2\times\mathcal{H}^2$ as follows:
\begin{align*}
    \Phi(U,V)=(Y,Z).
\end{align*}
We first prove that $\Phi$ is a contraction mapping from $\mathcal{H}^2\times\mathcal{H}^2$ to $\mathcal{H}^2\times\mathcal{H}^2$ with the norm
\begin{align*}
    \|(Y,Z)\|_\beta^2=\E\left[\int_0^T e^{\beta t}(|Y_t|^2+|Z_t|^2)dt\right],
\end{align*}
where $\beta$ is a positive constant to be determined later. In fact, given $(U^i,V^i)\in \mathcal{S}^2\times \mathcal{H}^2$, $i=1,2$, let $(Y^i,Z^i)=\Phi(U^i,V^i)$ and we denote $\hat{A}=A^1-A^2$ for $A=Y,Z,K$ and 
\begin{align*}
    \hat{f}(s)&=f(s,U^1_s,V^1_s)-f(s,U^2_s,V^2_s).
\end{align*}
Applying It\^{o}'s formula to $e^{\beta t}|\hat{Y}_t|^2$ and taking conditional expectations, we obtain that 
\begin{align*}
        &\E[e^{\beta t}|\hat{Y}_t|^2|\mathcal{G}_t]+\E\left[\int_t^T e^{\beta s}(\beta |\hat{Y}_s|^2+|\hat{Z}_s|^2)ds\Big|\mathcal{G}_t\right]\\
        =&\E\left[2\int_t^T e^{\beta s}\hat{f}(s)\hat{Y}_s ds\Big|\mathcal{G}_t\right]+\E\left[2\int_t^T e^{\beta s}\hat{Y}_sd\hat{K}_s\Big|\mathcal{G}_t\right]\\
        \leq &4\lambda^2\E\left[\int_t^T e^{\beta s}|\hat{Y}_s|^2 ds\Big|\mathcal{G}_t\right]+\frac{1}{2}\E\left[\int_t^T e^{\beta s}( |\hat{U}_s|^2+|\hat{V}_s|^2)ds\Big|\mathcal{G}_t\right],
\end{align*}
where we have used Eq. \eqref{e2.5'} in the last inequality. Choosing $\beta=4\lambda^2+1$, we obtain that 
\begin{equation}\label{e2.20}
    \E[e^{\beta t}|\hat{Y}_t|^2|\mathcal{G}_t]+\E\left[\int_t^T e^{\beta s}( |\hat{Y}_s|^2+|\hat{Z}_s|^2)ds\Big|\mathcal{G}_t\right]\leq \frac{1}{2}\E\left[\int_t^T e^{\beta s}( |\hat{U}_s|^2+|\hat{V}_s|^2)ds\Big|\mathcal{G}_t\right],
\end{equation}
which implies that $\Phi$ is a contraction mapping on $\mathcal{H}^2\times\mathcal{H}^2$. 

By Proposition \ref{repK} and Eq. \eqref{diffk}, we have
\begin{equation}\label{e2.21}
    \begin{split}
        \sup_{t\in[0,T]}|\hat{K}_t|\leq& C\sup_{t\in[0,T]}\left|\E\left[\int_0^T \hat{f}(s)ds-\int_0^T \hat{Z}_sdB_s\Big|\mathcal{G}_T\right]-\E\left[\int_0^t \hat{f}(s)ds-\int_0^t \hat{Z}_sdB_s\Big|\mathcal{G}_t\right]\right|\\
        \leq & C\E\left[\int_0^T ( |\hat{U}_s|+|\hat{V}_s|)ds\Big|\mathcal{G}_T\right]+C \sup_{t\in[0,T]}\E\left[\int_0^T ( |\hat{U}_s|+|\hat{V}_s|)ds\Big|\mathcal{G}_t\right]\\
        &+C\sup_{t\in[0,T]}\E\left[\left|\int_0^t \hat{Z}_sdB_s\right|\Big|\mathcal{G}_t\right]+C\E\left[\left|\int_0^T \hat{Z}_sdB_s\right|\Big|\mathcal{G}_T\right].
    \end{split}
\end{equation}
Note that 
\begin{align}\label{e2.20'}
    \hat{Y}_t=\E\left[\int_t^T \hat{f}(s)ds\Big|\mathcal{F}_t\right]+\E[\hat{K}_T-\hat{K}_t|\mathcal{F}_t].
\end{align}
Combining Eqs. \eqref{e2.20}-\eqref{e2.20'} and applying Doob's martingale inequality yield that 
\begin{align*}
    \E\left[\sup_{t\in[0,T]}|\hat{Y}_t|^2\right]\leq C\E\left[\int_0^T ( |\hat{U}_s|^2+|\hat{V}_s|^2)ds\right],
\end{align*}
which implies that $\Phi$ is continuous from $\mathcal{S}^2\times \mathcal{H}^2$ to itself. Together with \eqref{e2.20}, $\Phi$ has a fixed point $(Y,Z)\in \mathcal{S}^2\times \mathcal{H}^2$. $K$ can be constructed as in the proof of Proposition \ref{repK}. The proof is complete.
\end{proof}

\begin{remark}
  (i)  Compared with the doubly mean reflected BSDE studied in \cite{FS,L}, due to the fact that the bounded variation term $K$ is no longer deterministic, the proof for the wellposedness is more complicated. Furthermore, recall that the constraints in the mean reflected case may be nonlinear. More precisely, the constraints take the following form $\E[h(t,Y_t)]\in[l_t,u_t]$ (resp., $\E[L(t,Y_t)]\leq 0\leq \E[R(t,Y_t)]$) in \cite{FS} (resp., \cite{L}). A natural extension for the conditional reflected case is that 
    \begin{align*}
        \E[L(t,Y_t)|\mathcal{G}_t]\leq 0\leq \E[R(t,Y_t)|\mathcal{G}_t].
    \end{align*}
    Unfortunately, the method used in the linear reflected case is not valid. We will leave it for the future research.

    \noindent (ii) Another effective method to construct the solution to the classical reflected BSDEs is the approximation via penalization, which has also been applied to the mean reflected case with linear loss function. For the conditional reflected case, the proof by the penalization method is put into the Appendix. %For the conditional reflected case, a candidate family of penalized equations should evolve as follows
%\begin{align*}
 %   Y^n_t=\xi+\int_t^T f(s,Y^n_s,Z^n_s)ds-\int_t^T Z^n_s dB_s
%+n\int_t^T (\E[Y^n_s-L_s|\mathcal{G}_s])^-ds-n\int_t^T (\E[Y^n_s-U_s|\mathcal{G}_s])^+ds.
%\end{align*}
%However, due to the lack of the well-posedness of the above ``conditional expectation" BSDEs, we will leave the penalization method to the future research.
\end{remark}

\section{Properties of solutions to doubly conditional  reflected BSDEs}

Recall that for the classical doubly reflected BSDE, the solution corresponds to the value function of a certain Dynkin game (see \cite{CK,HH,HL}) and for the doubly mean reflected BSDE, the expectation of the solution coincides with some optimization problem (see \cite{FS,L}). A natural question is that if the solution to the doubly conditional reflected BSDE has a similar representation. We give an affirmative answer below. More precisely, let $(Y,Z,K)$ be the solution to the conditional doubly reflected BSDE \eqref{nonlinearyz} and Let $(Y',Z',K')$ be the solution to the doubly conditional  reflected BSDE with terminal value $\xi$, obstacles $L,U$ and constant driver $\{f(s,Y_s,Z_s)\}_{s\in[0,T]}$. Due to the uniqueness result (see Theorem \ref{uniqueness}), we have $Y\equiv Y'$. According to the proof of Proposition \ref{prop2.6}, we have the following result.

\begin{corollary}\label{cor3.1}
    Let $(Y,Z,K)$ be the solution to \eqref{nonlinearyz}. Then, for any $t\in[0,T]$, we have
    \begin{align*}%\label{dynkin3}
\E[Y_t|\mathcal{G}_t]=\essinf_{\sigma\in\mathcal{T}^{\mathbb{G}}_{t,T}}\esssup_{\tau\in\mathcal{T}^{\mathbb{G}}_{t,T}}\E\left[\widetilde{R}_t(\sigma,\tau)|\mathcal{G}_t\right]=\esssup_{\tau\in\mathcal{T}^{\mathbb{G}}_{t,T}}\essinf_{\sigma\in\mathcal{T}^{\mathbb{G}}_{t,T}}\E\left[\widetilde{R}_t(\sigma,\tau)|\mathcal{G}_t\right],
    \end{align*}
    where 
     \begin{align*}
         \widetilde{R}_t(\sigma,\tau):=\int_t^{\sigma\wedge \tau} f(u,Y_u,Z_u)du+\xi I_{\{\sigma\wedge \tau=T\}}+L_\tau I_{\{\tau<T,\tau\leq \sigma\}}+U_\sigma I_{\{\sigma<\tau\}}.
     \end{align*}
\end{corollary}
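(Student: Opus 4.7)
The plan is to reduce the corollary to Proposition~\ref{prop2.6} by freezing the nonlinearity, and then read off the Dynkin game representation from the constructive proof of that proposition.

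First, I would set $g(s) := f(s, Y_s, Z_s)$ for $s \in [0,T]$. Since $(Y,Z) \in \mathcal{S}^2 \times \mathcal{H}^2$ and $f$ satisfies (H1), the process $g$ belongs to $\mathcal{H}^2$. The triple $(Y, Z, K)$ then trivially solves the doubly conditional reflected BSDE driven by the $(y,z)$-independent coefficient $g$, with the same terminal value $\xi$ and the same obstacles $L, U$. Under (H4), Proposition~\ref{prop2.6} produces a solution of this same equation, and Theorem~\ref{uniqueness} forces it to coincide with $(Y, Z, K)$.

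Second, I would extract from the proof of Proposition~\ref{prop2.6} the identification
$$\E[Y_t \mid \mathcal{G}_t] = \bar{Y}_t = \bar{V}_t = \underline{V}_t,$$
where $\bar{V}_t, \underline{V}_t$ are the upper and lower values of the Dynkin game~\eqref{dynkin1} with reward $R_t(\sigma,\tau)$ built from the driver $g$. Substituting $g(u) = f(u, Y_u, Z_u)$ into $R_t(\sigma,\tau)$ gives exactly the reward $\widetilde{R}_t(\sigma,\tau)$ appearing in the statement of the corollary. The fairness $\bar{V}_t = \underline{V}_t$ is already established in the proof of Proposition~\ref{prop2.6} via the shifted game~\eqref{dynkin2} and Theorem~\ref{theorem3.6}, using that (H4) implies the Mokobodski condition and that $\widetilde{L}, \widetilde{U}$ satisfy the required regularity.

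I do not expect a genuine obstacle here; the content of the corollary is essentially a relabeling of Proposition~\ref{prop2.6}, and the only point needing a moment's care is to confirm that replacing the original driver $f(s,Y_s,Z_s)$ by the frozen process $g(s)$ leaves the equation \eqref{nonlinearyz} and its Skorokhod conditions unchanged for the given solution, which is immediate.
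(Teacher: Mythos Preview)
Your proposal is correct and follows exactly the approach the paper takes: freeze the driver by setting $g(s)=f(s,Y_s,Z_s)$, invoke uniqueness (Theorem~\ref{uniqueness}) to identify $(Y,Z,K)$ with the solution produced by Proposition~\ref{prop2.6} for the $(y,z)$-independent driver $g$, and then read off the Dynkin game representation from the proof of that proposition. The paper's argument is given in the paragraph immediately preceding the corollary and is indistinguishable from yours.
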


\begin{remark}
    If $\mathbb{G}=\mathbb{F}$, Corollary \ref{cor3.1} degenerates into Theorem 4.1 in \cite{CK} and Proposition 2.2.1 in \cite{HH}. If $\mathbb{G}$ is chosen to be a deterministic scenario, Corollary \ref{cor3.1} degenerates into  Theorem 3.6 in \cite{FS} and Theorem 4.7 in \cite{L} with the loss functions in \cite{FS,L} being linear. %Corollary \ref{cor3.1} also extends Corollary 3.1 in \cite{HHL} to the case of double reflections.
\end{remark}

Corollary \ref{cor3.1} indicates that the conditional expectation of the solution to the doubly conditional reflected BSDE coincides with the value function of a Dynkin game with partial information. However, the payoff process $\widetilde{R}$ also depends on the solution $Y,Z$. In the sequel, we aim to find another representation of the Dynkin game such that its corresponding payoff process is independent of the solution to the doubly conditional  reflected BSDE. This representation needs some additional assumptions for the driver $f$.

\begin{theorem}\label{thm3.2}
    Let $(Y,Z,K)$ be the unique solution to the following doubly  conditional reflected BSDE
    \begin{equation*}\label{linearyz}
\begin{cases}
Y_t=\xi+\int_t^T (a_sY_s+b_sZ_s+c_s)ds-\int_t^T Z_s dB_s+K_T-K_t, \\
\E[L_t|\mathcal{G}_t]\leq \E[Y_t|\mathcal{G}_t]\leq \E[U_t|\mathcal{G}_t], \\
K_t=K^+_t-K^-_t, \ K^+,K^-\in \mathcal{A}^2_{\mathbb{G}},\\
\int_0^T \E[Y_t-L_t|\mathcal{G}_t]dK_t^+=\int_0^T \E[U_t-L_t|\mathcal{G}_t]dK^-_t=0.
\end{cases}
\end{equation*}
Let $\Gamma$ be the unique solution to the following linear SDE
\begin{equation}\label{e3.2}
    d\Gamma_s=a_s\Gamma_sds+b_s\Gamma_sdB_s, \ \Gamma_0=1.
\end{equation}
Suppose that the process $\Gamma$ is $\mathbb{G}$-adapted. For each fixed $t\in[0,T]$, for any given $\tau,\sigma\in\mathcal{T}^{\mathbb{G}}_{t,T}$, let $(y^{\tau,\sigma},z^{\tau,\sigma})$ be the unique solution to the following BSDE
\begin{equation}\label{e3.4}
    y^{\tau,\sigma}_s=(\xi I_{\{\sigma\wedge \tau=T\}}+L_\tau I_{\{\tau<T,\tau\leq \sigma\}}+U_\sigma I_{\{\sigma<\tau\}})+\int_s^{\tau\wedge\sigma}(a_r y^{\tau,\sigma}_r+b_r z^{\tau,\sigma}_r+c_r)dr-\int_s^{\tau\wedge\sigma}z^{\tau,\sigma}_rdB_r.
\end{equation}
Then, for any $t\in[0,T]$, we have
\begin{equation}\label{e3.3}
\E[Y_t|\mathcal{G}_t]=\essinf_{\sigma\in\mathcal{T}^{\mathbb{G}}_{t,T}}\esssup_{\tau\in\mathcal{T}^{\mathbb{G}}_{t,T}}\E\left[y_t^{\sigma,\tau}|\mathcal{G}_t\right]=\esssup_{\tau\in\mathcal{T}^{\mathbb{G}}_{t,T}}\essinf_{\sigma\in\mathcal{T}^{\mathbb{G}}_{t,T}}\E\left[y_t^{\sigma,\tau}|\mathcal{G}_t\right].
\end{equation}
Moreover, the saddle point $(\tau^*,\sigma^*)$ is given by
\begin{equation}\label{e3.5}
 \begin{split}
     &\tau^*_t=\inf\{s\in[t,T]:\E[Y_s-L_s|\mathcal{G}_s]=0\}\wedge T, \\
     &\sigma^*_t=\inf\{s\in[t,T]:\E[Y_s-U_s|\mathcal{G}_s]=0\}\wedge T.
 \end{split}   
\end{equation}
\end{theorem}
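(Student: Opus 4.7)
The plan is to exploit the linearity of the driver to obtain an explicit representation for $\E[Y_t|\mathcal{G}_t]-\E[y^{\tau,\sigma}_t|\mathcal{G}_t]$ through the adjoint $\Gamma$, and then to verify, using the Skorokhod flatness of $K^{\pm}$ together with the vanishing of the conditional boundary gap at $(\tau^*,\sigma^*)$, that this pair is a saddle point.

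First I would check that $\tau^*_t,\sigma^*_t$ are well-defined $\mathbb{G}$-stopping times. The processes $s\mapsto\E[Y_s-L_s|\mathcal{G}_s]$ and $s\mapsto\E[U_s-Y_s|\mathcal{G}_s]$ are optional projections on $\mathbb{G}$ of continuous $\mathbb{F}$-adapted processes in $\mathcal{S}^2$, so by the right-continuity of $\mathbb{G}$ (Assumption \ref{assG}(i)) they admit càdlàg $\mathbb{G}$-adapted versions, both nonnegative by the reflection constraint. The hitting times of the closed set $\{0\}$ are therefore $\mathbb{G}$-stopping times, and right-continuity yields $\E[Y_{\tau^*}-L_{\tau^*}|\mathcal{G}_{\tau^*}]=0$ on $\{\tau^*<T\}$, with the analogous identity for $\sigma^*$.

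Next, applying It\^o's formula to $\Gamma Y$ and using the cancellation $d\langle\Gamma,Y\rangle_s=\Gamma_s b_s Z_s\,ds$ gives
\begin{equation*}
d(\Gamma_s Y_s)=-\Gamma_s c_s\,ds+\Gamma_s(Z_s+b_s Y_s)\,dB_s-\Gamma_s\,dK_s,
\end{equation*}
and the analogous identity without $dK$ for $\Gamma y^{\tau,\sigma}$ on $[t,\tau\wedge\sigma]$. Integrating up to $\rho:=\tau\wedge\sigma$, taking $\mathcal{F}_t$-conditional expectation (the stochastic integral being a true martingale by standard $L^2$ estimates) and then $\mathcal{G}_t$-conditional expectation (using $\mathbb{G}$-adaptedness of $\Gamma$, hence $\mathcal{G}_t$-measurability of $\Gamma_t$), I obtain the key identity
\begin{equation*}
\Gamma_t\bigl(\E[Y_t|\mathcal{G}_t]-\E[y^{\tau,\sigma}_t|\mathcal{G}_t]\bigr)=\E\!\left[\Gamma_\rho(Y_\rho-\eta_{\tau,\sigma})+\int_t^{\rho}\Gamma_s\,dK_s\,\Big|\,\mathcal{G}_t\right],
\end{equation*}
where $\eta_{\tau,\sigma}=\xi I_{\{\rho=T\}}+L_\tau I_{\{\tau<T,\tau\leq\sigma\}}+U_\sigma I_{\{\sigma<\tau\}}$ and $\Gamma_t>0$.

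Specializing to $\sigma=\sigma^*$ with arbitrary $\tau\in\mathcal{T}^{\mathbb{G}}_{t,T}$: since $K^-\in\mathcal{A}^2_{\mathbb{G}}$ is continuous and, by the Skorokhod condition, constant on $[t,\sigma^*_t]$ (because $\E[U_s-Y_s|\mathcal{G}_s]>0$ on $[t,\sigma^*_t)$), we have $\int_t^{\rho}\Gamma_s\,dK_s=\int_t^{\rho}\Gamma_s\,dK^+_s\geq 0$. Writing $Y_\rho-\eta_{\tau,\sigma^*}=(Y_\tau-L_\tau)I_{\{\tau\leq\sigma^*,\tau<T\}}+(Y_{\sigma^*}-U_{\sigma^*})I_{\{\sigma^*<\tau\}}$ and using the tower property via $\mathcal{G}_\tau,\mathcal{G}_{\sigma^*}$ (valid since $\tau,\sigma^*$ are $\mathbb{G}$-stopping times and $\Gamma$ is $\mathbb{G}$-adapted), the first indicator term contributes $\geq 0$ from $\E[Y_\tau-L_\tau|\mathcal{G}_\tau]\geq 0$ and the second vanishes by the previous paragraph. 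Hence $\E[Y_t|\mathcal{G}_t]\geq\E[y^{\tau,\sigma^*}_t|\mathcal{G}_t]$; a symmetric argument yields $\E[Y_t|\mathcal{G}_t]\leq\E[y^{\tau^*,\sigma}_t|\mathcal{G}_t]$, with equality at $(\tau^*,\sigma^*)$. The representation \eqref{e3.3} then follows from the standard sandwich
\begin{equation*}
\essinf_\sigma\esssup_\tau\E[y^{\tau,\sigma}_t|\mathcal{G}_t]\leq\esssup_\tau\E[y^{\tau,\sigma^*}_t|\mathcal{G}_t]\leq\E[Y_t|\mathcal{G}_t]\leq\essinf_\sigma\E[y^{\tau^*,\sigma}_t|\mathcal{G}_t]\leq\esssup_\tau\essinf_\sigma\E[y^{\tau,\sigma}_t|\mathcal{G}_t]
\end{equation*}
combined with the trivial $\esssup_\tau\essinf_\sigma\leq\essinf_\sigma\esssup_\tau$. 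The main obstacle is the first step: ensuring that the $\mathbb{G}$-optional projections admit càdlàg versions at whose first zero the projection genuinely vanishes, which hinges on Assumption \ref{assG} and the continuity of $Y,L,U$; the algebra thereafter is a clean verification.
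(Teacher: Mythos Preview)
Your proposal is correct and follows essentially the same route as the paper: both arguments pass through the adjoint $\Gamma$ to obtain the representation $\Gamma_t(\E[Y_t|\mathcal{G}_t]-\E[y^{\tau,\sigma}_t|\mathcal{G}_t])=\E[\Gamma_\rho(Y_\rho-\eta_{\tau,\sigma})+\int_t^\rho\Gamma_s\,dK_s\mid\mathcal{G}_t]$, then use the Skorokhod flatness of $K^\pm$ before $\tau^*_t$ or $\sigma^*_t$ and the tower property through $\mathcal{G}_{\tau^*_t\wedge\sigma}$ to sign the right-hand side. The paper works with the difference $\hat Y=Y-y^{\tau^*_t,\sigma}$ as a linear BSDE and solves it explicitly, whereas you apply It\^o to $\Gamma Y$ and $\Gamma y^{\tau,\sigma}$ separately, but the resulting identity and the verification are identical; you are also more explicit than the paper about why $\tau^*_t,\sigma^*_t$ are $\mathbb{G}$-stopping times at which the conditional boundary gap actually vanishes, a point the paper uses without comment.
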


\begin{proof}
    In order to prove \eqref{e3.3} and \eqref{e3.5}, it suffices to show that for any $\tau,\sigma\in\mathcal{T}^{\mathbb{G}}_{t,T}$, we have
    \begin{equation*}
        \E\left[y^{\tau\wedge\sigma^*_t}_t\big|\mathcal{G}_t\right]\leq \E[Y_t|\mathcal{G}_t]\leq \E\left[y^{\tau^*_t\wedge\sigma}_t\big|\mathcal{G}_t\right].
    \end{equation*}
    In the following, we only prove the second inequality since the first one can be proved similarly. 

    For any $\sigma\in\mathcal{T}^{\mathbb{G}}_{t,T}$, Set $\hat{Y}=Y-y^{\tau^*_t, \sigma}$, $\hat{Z}=Z-z^{\tau^*_t, \sigma}$. It is easy to check that
    \begin{align*}
        \hat{Y}_s=\hat{Y}_{\tau^*_t\wedge \sigma}+\int_s^{\tau^*_t\wedge\sigma}(a_r \hat{Y}_r+b_r \hat{Z}_r)dr+K_{\tau^*_t\wedge \sigma}-K_s-\int_s^{\tau^*_t\wedge\sigma}\hat{Z}_rdB_r.
    \end{align*}
    Solving the above BSDE explicitly, we obtain that 
    \begin{equation}\label{e3.6}
        \Gamma_t \hat{Y}_t=\E\left[\Gamma_{\tau^*_t\wedge \sigma}\hat{Y}_{\tau^*_t\wedge \sigma}+\int_t^{\tau^*_t\wedge \sigma}\Gamma_sdK_s\Big|\mathcal{F}_t\right].
    \end{equation}
    Note that for any $s\in[t,\tau^*_t)$, we have $\E[Y_s-L_s|\mathcal{G}_s]>0$. It follows from the Skorokhod condition that $K^+_s=K^+_t$, for all $s\in[t,\tau^*_t)$. Therefore, we have 
    \begin{align*}
        \int_t^{\tau^*_t\wedge \sigma}\Gamma_sdK_s=-\int_t^{\tau^*_t\wedge \sigma}\Gamma_sdK^-_s\leq 0.
    \end{align*}
    Plugging the above inequality into \eqref{e3.6} yields that
    \begin{equation*}
        \Gamma_t \hat{Y}_t\leq \E\left[\Gamma_{\tau^*_t\wedge \sigma}\hat{Y}_{\tau^*_t\wedge \sigma}\big|\mathcal{F}_t\right].
    \end{equation*}
    Taking conditional expectations on both sides and noting that $\Gamma$ is $\mathbb{G}$-adapted, we have
    \begin{equation}\label{e3.7}
        \Gamma_t\E[\hat{Y}_t|\mathcal{G}_t]\leq \E\left[\Gamma_{\tau^*_t\wedge \sigma}\hat{Y}_{\tau^*_t\wedge \sigma}\big|\mathcal{G}_t\right]=\E\left[\E\left[\Gamma_{\tau^*_t\wedge \sigma}\hat{Y}_{\tau^*_t\wedge \sigma}\big|\mathcal{G}_{\tau^*_t\wedge \sigma}\right]\Big|\mathcal{G}_t\right]=\E\left[\Gamma_{\tau^*_t\wedge \sigma}\E\left[\hat{Y}_{\tau^*_t\wedge \sigma}\big|\mathcal{G}_{\tau^*_t\wedge \sigma}\right]\Big|\mathcal{G}_t\right].
    \end{equation}
    Recall that 
    \begin{align*}
        \hat{Y}_{\tau^*_t\wedge \sigma}=Y_{\tau^*_t\wedge \sigma}-\left(\xi I_{\{\sigma\wedge \tau^*_t=T\}}+L_{\tau^*_t} I_{\{\tau^*_t<T,\tau^*_t\leq \sigma\}}+U_\sigma I_{\{\sigma<\tau^*_t\}}\right).
    \end{align*}
    It is easy to check that 
    \begin{align*}
        \E\left[\hat{Y}_{\tau^*_t\wedge \sigma}\big|\mathcal{G}_{\tau^*_t\wedge \sigma}\right]\leq 0,
    \end{align*}
    which together with \eqref{e3.7} implies that
    \begin{align*}
        \Gamma_t\E[\hat{Y}_t|\mathcal{G}_t]\leq 0.
    \end{align*}
    This amounts to say that for any $t\in[0,T]$, we have
    \begin{align*}
        \E[Y_t|\mathcal{G}_t]\leq \E\left[y^{\tau^*_t\wedge\sigma}_t\big|\mathcal{G}_t\right],
    \end{align*}
    which is the desired result. The proof is complete.
\end{proof}

\begin{remark}
        Theorem \ref{thm3.2} extends Theorem 3.2 in \cite{HHL}  to the case of double reflections. An interesting question is that under which condition the solution $\Gamma$ to \eqref{e3.2} is $\mathbb{G}$-adapted. The readers may refer to Example 3.2 in \cite{HHL}.
\end{remark}

Since the constraints are not given pointwisely, we cannot expect to establish the pointwise comparison theorem for the general case (see Example 3.2 in \cite{HHLLW}). Motivated by Corollary 3.3 in \cite{HHL}, the objective is to compare the conditional expectation of the solution $Y$ with respect to the partial information. More precisely, with the help of Theorem \ref{thm3.2}, we may obtain the following comparison theorem for doubly conditional  reflected BSDEs.

\begin{corollary}\label{cor3.3}
    Suppose that $(\xi^i,L^i,U^i)$, $i=1,2$, satisfy Assumptions (H2)-(H3) and $\Gamma$ is $\mathbb{G}$-adpated, where $\Gamma$ is the solution to \eqref{e3.2}. Furthermore, we assume that $\E[\xi^1|\mathcal{G}_T]\geq \E[\xi^2|\mathcal{G}_T]$ and for any $t\in[0,T]$, $\E[\eta^1_t|\mathcal{G}_t]\geq \E[\eta^2_t|\mathcal{G}_t]$ for $\eta=c,L,U$.   Let $(Y^i,L^i,U^i)$, $i=1,2$, be the unique solution to the following doubly conditional reflected BSDE
    \begin{equation*}
\begin{cases}
Y^i_t=\xi^i+\int_t^T (a_sY^i_s+b_sZ^i_s+c^i_s)ds-\int_t^T Z^i_s dB_s+K^i_T-K^i_t, \\
\E[L^i_t|\mathcal{G}_t]\leq \E[Y^i_t|\mathcal{G}_t]\leq \E[U^i_t|\mathcal{G}_t], \\
K^i_t=K^{i,+}_t-K^{i,-}_t, \ K^{i,+},K^{i,-}\in \mathcal{A}^2_{\mathbb{G}},\\
\int_0^T \E[Y^i_t-L^i_t|\mathcal{G}_t]dK_t^{i,+}=\int_0^T \E[U^i_t-L^i_t|\mathcal{G}_t]dK^{i,-}_t=0.
\end{cases}
\end{equation*}
Then, for any $t\in[0,T]$, we have 
\begin{align*}
    \E[Y^1_t|\mathcal{G}_t]\geq \E[Y^2_t|\mathcal{G}_t].
\end{align*}
\end{corollary}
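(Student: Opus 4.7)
My plan is to use the Dynkin game representation of Theorem \ref{thm3.2}. Since both $Y^1$ and $Y^2$ satisfy doubly conditional reflected BSDEs with linear drivers sharing the same coefficients $(a,b)$, the same process $\Gamma$ from \eqref{e3.2} applies to both. Writing $y^{i,\sigma,\tau}$ for the solution of \eqref{e3.4} with parameters $(\xi^i, c^i, L^i, U^i)$, Theorem \ref{thm3.2} gives
\begin{align*}
\E[Y^i_t|\mathcal{G}_t] = \essinf_{\sigma\in\mathcal{T}^{\mathbb{G}}_{t,T}} \esssup_{\tau\in\mathcal{T}^{\mathbb{G}}_{t,T}} \E[y^{i,\sigma,\tau}_t|\mathcal{G}_t], \quad i=1,2.
\end{align*}
Because $\essinf$ and $\esssup$ preserve a.s.\ inequalities when taken over a common index set, the conclusion reduces to establishing $\E[y^{1,\sigma,\tau}_t|\mathcal{G}_t] \geq \E[y^{2,\sigma,\tau}_t|\mathcal{G}_t]$ a.s.\ for every pair $(\sigma,\tau) \in \mathcal{T}^{\mathbb{G}}_{t,T}\times\mathcal{T}^{\mathbb{G}}_{t,T}$.

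To obtain this payoff-wise comparison, I would apply It\^o's formula to $\Gamma_s y^{i,\sigma,\tau}_s$ on $[t,\sigma\wedge\tau]$; the quadratic-covariation between $\Gamma$ and $y^{i,\sigma,\tau}$ exactly cancels the $(a_s y + b_s z)$-drift, yielding the standard linear-BSDE representation
\begin{align*}
\Gamma_t y^{i,\sigma,\tau}_t = \E\!\left[\Gamma_{\sigma\wedge\tau} R^i(\sigma,\tau) + \int_t^{\sigma\wedge\tau} \Gamma_s c^i_s\, ds \,\Big|\, \mathcal{F}_t\right],
\end{align*}
where $R^i(\sigma,\tau) := \xi^i I_{\{\sigma\wedge\tau=T\}} + L^i_\tau I_{\{\tau<T,\tau\leq\sigma\}} + U^i_\sigma I_{\{\sigma<\tau\}}$. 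Because $\Gamma$ is positive and $\mathbb{G}$-adapted, $\Gamma_t$ is $\mathcal{G}_t$-measurable and $\Gamma_{\sigma\wedge\tau}$ is $\mathcal{G}_{\sigma\wedge\tau}$-measurable (since $\sigma\wedge\tau\in\mathcal{T}^{\mathbb{G}}$). Taking the $\mathcal{G}_t$-conditional expectation of both sides and pushing the conditioning through $\mathcal{G}_{\sigma\wedge\tau}$ for the terminal piece and through $\mathcal{G}_s$ inside the integral (via Fubini and the tower property) transforms the right-hand side into
\begin{align*}
\E\!\left[\Gamma_{\sigma\wedge\tau}\, \E[R^i(\sigma,\tau)|\mathcal{G}_{\sigma\wedge\tau}] + \int_t^{\sigma\wedge\tau} \Gamma_s\, \E[c^i_s|\mathcal{G}_s]\, ds \,\Big|\, \mathcal{G}_t\right].
\end{align*}
Splitting $R^i$ along its three disjoint events and invoking the hypothesized inequalities at $\xi$, $L$, $U$, together with $\E[c^1_s|\mathcal{G}_s] \geq \E[c^2_s|\mathcal{G}_s]$, then delivers the desired payoff comparison.

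The main obstacle is lifting the hypothesis, which is stated for each deterministic $t$, to its analogue at the $\mathbb{G}$-stopping times $\tau$ and $\sigma$: namely $\E[L^1_\tau|\mathcal{G}_\tau] \geq \E[L^2_\tau|\mathcal{G}_\tau]$ and $\E[U^1_\sigma|\mathcal{G}_\sigma] \geq \E[U^2_\sigma|\mathcal{G}_\sigma]$. Under Assumption \ref{assG} the $\mathbb{G}$-optional projections $\widehat{L}^i, \widehat{U}^i$ admit right-continuous modifications, and the pointwise-in-$t$ inequalities force $\widehat{L}^1 \geq \widehat{L}^2$ and $\widehat{U}^1 \geq \widehat{U}^2$ up to indistinguishability; Doob's optional sampling for optional projections then transfers the inequality to every $\mathbb{G}$-stopping time. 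Modulo this standard measure-theoretic point, the rest is a routine monotone comparison applied to the linear representation above.
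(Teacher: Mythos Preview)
Your proposal is correct and follows essentially the same route as the paper: reduce to a payoff-wise comparison via the Dynkin game representation of Theorem \ref{thm3.2}, and obtain that comparison from the explicit $\Gamma$-weighted representation of the linear BSDEs \eqref{e3.4}. The paper streamlines the computation slightly by applying the representation directly to the difference $\hat y=y^{1,\tau,\sigma}-y^{2,\tau,\sigma}$, arriving at $\Gamma_t\E[\hat y_t|\mathcal G_t]=\E[\Gamma_{\tau\wedge\sigma}\hat y_{\tau\wedge\sigma}+\int_t^{\tau\wedge\sigma}\Gamma_s\hat c_s\,ds\,|\,\mathcal G_t]$ in one step; but this is the same computation you outline. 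You are also right to flag the passage from deterministic $t$ to $\mathbb{G}$-stopping times in the hypotheses on $L,U,\xi$: the paper uses $\E[\hat y_{\tau\wedge\sigma}|\mathcal G_{\tau\wedge\sigma}]\ge 0$ without comment, and your justification via continuity of the $\mathbb{G}$-optional projections under Assumption~\ref{assG} is the appropriate way to close that gap.
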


\begin{proof}
    For each fixed $t\in[0,T]$, for any given $\tau,\sigma\in\mathcal{T}^{\mathbb{G}}_{t,T}$, let $(y^{i,\tau,\sigma},z^{i,\tau,\sigma})$ be the unique solution to the following BSDE
\begin{align*}
    y^{i,\tau,\sigma}_s=&\xi^i I_{\{\sigma\wedge \tau=T\}}+L^i_\tau I_{\{\tau<T,\tau\leq \sigma\}}+U^i_\sigma I_{\{\sigma<\tau\}}-\int_s^{\tau\wedge\sigma}z^{^ii,\tau,\sigma}_rdB_r\\
    &+\int_s^{\tau\wedge\sigma}(a_r y^{i,\tau,\sigma}_r+b_r z^{i,\tau,\sigma}_r+c^i_r)dr.
\end{align*}
For simplicity, we omit the superscript $\tau,\sigma$ and set $\hat{y}_t=y^1_t-y^2_t$, $\hat{c}_t=c^1_t-c^2_t$. By a similar analysis as the proof of Theorem \ref{thm3.2}, we have
\begin{align}\label{e3.10}
    \Gamma_t \E[\hat{y}_t|\mathcal{G}_t]=\E\left[\Gamma_{\tau\wedge \sigma}\hat{y}_{\tau\wedge \sigma}+\int_t^{\tau\wedge \sigma}\Gamma_s\hat{c}_sds\Big|\mathcal{G}_t\right].
\end{align}
It is easy to check that 
\begin{equation}\label{e3.10'}
\begin{split}
    &\E[\Gamma_{\tau\wedge \sigma}\hat{y}_{\tau\wedge \sigma}|\mathcal{G}_t]=\E[\Gamma_{\tau\wedge \sigma}\E[\hat{y}_{\tau\wedge \sigma}|\mathcal{G}_{\tau\wedge\sigma}]|\mathcal{G}_t]\geq0,\\
    &\E\left[\int_t^{\tau\wedge \sigma}\Gamma_s\hat{c}_sds\Big|\mathcal{G}_t\right]=\E\left[\int_t^{\tau\wedge \sigma}\Gamma_s\E[\hat{c}_s|\mathcal{G}_s]ds\Big|\mathcal{G}_t\right]\geq 0.
\end{split}
\end{equation}
Combining Eqs. \eqref{e3.10} and \eqref{e3.10'} yields that $\E[y^1_t|\mathcal{G}_t]\geq \E[y^2_t|\mathcal{G}_t]$ for any $\tau,\sigma\in \mathcal{T}^{\mathbb{G}}_{t,T}$. By Theorem \ref{thm3.2}, for $i=1,2$, we have 
\begin{align*}
    \E[Y^i_t|\mathcal{G}_t]=\essinf_{\sigma\in\mathcal{T}^{\mathbb{G}}_{t,T}}\esssup_{\tau\in\mathcal{T}^{\mathbb{G}}_{t,T}}\E\left[y_t^{i,\tau,\sigma}|\mathcal{G}_t\right].
\end{align*}
The proof is complete.
%$\E[Y^i_t|\mathcal{G}_t]$ is the value function of the Dynkin game with payoff process $y^i$. The proof is complete.
\end{proof}

\section{Optimal switching under partial information}

In this section, we consider the starting and stopping problem in reversible investment under partial information $\mathbb{G}$, which satisfies Assumption \ref{assG}. Partial information usually occurs in a situation where only part of the overall information of the market can be accessed by the agent. For the reversible investment problem, roughly speaking, the objective is to find a sequence of $\mathbb{G}$-stopping times to maximize the overall profit, where any two successive stopping times represent the time to stop the production and the time to resume it, respectively. We first give the definition of such kind of stopping times. 

\begin{definition}
    A strategy is an increasing sequence of $\mathbb{G}$-stopping times $\delta=\{\tau_n\}_{n\geq 1}$. A strategy $\delta=\{\tau_n\}_{n\geq 1}$ is called admissible if $\lim_{n\rightarrow\infty}\tau_n=T$, $\P$-a.s. An admissible strategy is called finite if $\P(\omega:\tau_n(\omega)<T,\forall n\geq1)=0$. The collection of admissible (resp., finite admissible) strategies is denoted by $\mathfrak{D}_a$ (resp., $\mathfrak{D}$).
\end{definition}

Now, we describe the starting and stopping problems in more detail. There are two states of the system, for example, one represents  the ongoing productivity and the other is the frozen  productivity. The agent should choose when to switch between these two states. We assume that the agent has partial information $\mathbb{G}$, which means that her decision is a set of $\mathbb{G}$-stopping times. For any $n\geq 1$ and any given strategy $\delta=\{\tau_n\}_{n\geq 1}$, during the time period $[0,\tau_1]$ and $(\tau_{2n},\tau_{2n+1}]$, the production is open and the instantaneous profit is $\psi_1(t)$  while during the time period $(\tau_{2n+1},\tau_{2n+2}]$, the production is closed and the instantaneous profit is $\psi_2(t)$. For any $t\in[0,T]$, Let $D_t$ (resp., $a_t$) represents the sunk cost when the production is stopped (resp., starts). For any $t\in[0,T]$, set
\begin{displaymath}
    u_t=I_{[0,\tau_1]}(t)+\sum_{n\geq 1}I_{(\tau_{2n},\tau_{2n+1}]}(t),
\end{displaymath}
which amounts to say that $u_t=1$ if the production is open, and  $0$ otherwise. For any $t\in[0,T]$, let $\Phi(t,0)=\psi_2(t)$ and $\Phi(t,1)=\psi_1(t)$. Then, the value function is given by 
\begin{align*}
    J(\delta)=\E\left[\int_0^T\Phi(s,u_s)ds-\sum_{n\geq 1}\left(D_{\tau_{2n-1}} I_{\{\tau_{2n-1}<T\}}+a_{\tau_{2n}} I_{\{\tau_{2n}<T\}}\right)\right].
\end{align*}
We first propose the following assumptions for the instantaneous profit and the sunk cost throughout this section.

\begin{assumption}\label{assaD}
   \item[(1)] The processes $\psi_i$ belong to $\mathcal{H}^2$, $i=1,2$.

   \item[(2)] The processes $a,D\in\mathcal{S}^2$ are  nonnegative such that 
   \begin{align*}
        \inf_{(t,\omega)\in[0,T]\times\Omega}a_t(\omega)\wedge D_t(\omega)>0.
    \end{align*}
    Besides, $a$ and $-D$ satisfy the Mokobodski condition.
\end{assumption}

The objective is to calculate the maximal expected profit and to find the optimal strategies such that the maximal value is obtained. It is natural that the optimal strategies should be necessarily finite, otherwise the sunk cost would be infinite, which leads to the following result.

\begin{proposition}
    We have $\sup_{\delta\in\mathfrak{D}_a}J(\delta)=\sup_{\delta\in\mathfrak{D}}J(\delta)$.
\end{proposition}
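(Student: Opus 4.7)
The plan is to show that admissible strategies which are not finite contribute $-\infty$ to the supremum, so they can be safely excluded. The inequality $\sup_{\delta\in\mathfrak{D}}J(\delta)\leq \sup_{\delta\in\mathfrak{D}_a}J(\delta)$ is immediate from $\mathfrak{D}\subset \mathfrak{D}_a$, so the work is in the reverse direction.

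First I would check that the supremum is strictly greater than $-\infty$. The trivial strategy $\delta_0=\{\tau_n\}_{n\geq 1}$ with $\tau_n=T$ for every $n$ belongs to $\mathfrak{D}$, and for this strategy $u_t\equiv 1$ and the sunk-cost sum is zero, so
\begin{equation*}
J(\delta_0)=\E\left[\int_0^T \psi_1(s)\,ds\right],
\end{equation*}
which is finite by Assumption \ref{assaD}(1). Hence $\sup_{\delta\in\mathfrak{D}_a}J(\delta)>-\infty$.

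The main step is to prove $J(\delta)=-\infty$ for every $\delta\in\mathfrak{D}_a\setminus\mathfrak{D}$. Fix such a $\delta=\{\tau_n\}_{n\geq 1}$ and set $A:=\{\omega:\tau_n(\omega)<T\text{ for all }n\geq 1\}$, which has positive probability by the definition of $\mathfrak{D}$. By Assumption \ref{assaD}(2) there is a constant $c:=\inf_{(t,\omega)}a_t(\omega)\wedge D_t(\omega)>0$. On the event $A$, every indicator $I_{\{\tau_{2n-1}<T\}}$ and $I_{\{\tau_{2n}<T\}}$ equals $1$, and each summand $D_{\tau_{2n-1}}$ or $a_{\tau_{2n}}$ is at least $c$, so
\begin{equation*}
\sum_{n\geq 1}\left(D_{\tau_{2n-1}}I_{\{\tau_{2n-1}<T\}}+a_{\tau_{2n}}I_{\{\tau_{2n}<T\}}\right)=+\infty\quad\text{on }A.
\end{equation*}
Since the first term $\int_0^T\Phi(s,u_s)\,ds$ is integrable (because $\psi_1,\psi_2\in\mathcal{H}^2$), taking expectations gives $J(\delta)=-\infty$.

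Combining these two facts, the supremum over $\mathfrak{D}_a$ is attained by a sequence of strategies whose $J$-values are bounded below, hence by strategies in $\mathfrak{D}_a\setminus(\mathfrak{D}_a\setminus\mathfrak{D})=\mathfrak{D}$. Therefore $\sup_{\delta\in\mathfrak{D}_a}J(\delta)=\sup_{\delta\in\mathfrak{D}}J(\delta)$. The only subtle point I foresee is making sure the integrability of $\int_0^T\Phi(s,u_s)\,ds$ is rigorously handled (it follows from $|\Phi(s,u_s)|\leq |\psi_1(s)|+|\psi_2(s)|$ and the $\mathcal{H}^2$-assumption, so Jensen/Cauchy--Schwarz gives an $L^1$-bound), but this is routine.
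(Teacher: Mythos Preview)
Your argument is correct and matches the intuition the paper gives just before the proposition (``otherwise the sunk cost would be infinite''); the paper itself states the result without a formal proof, and your write-up is precisely the natural formalization of that remark. The only point worth making explicit is that $J(\delta)$ is well-defined (not of the form $+\infty-\infty$) because the sunk-cost sum is nonnegative while the integral term is in $L^1$, but you effectively address this in your closing comment.
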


The following result plays a key role in constructing the optimal strategies.

\begin{proposition}\label{y1y2}
    There exist two processes $Y^i\in \mathcal{S}^2$, $i=1,2$, such that for any $t\in[0,T]$,
    \begin{equation*}\begin{split}
        &\E[Y^1_t|\mathcal{G}_t]=\esssup_{\tau\in \mathcal{T}^{\mathbb{G}}_{t,T}}\E\left[\int_t^\tau \psi_1(r)dr+(Y^2_\tau-D_\tau)I_{\{\tau<T\}}\Big|\mathcal{G}_t\right],\\
        &\E[Y^2_t|\mathcal{G}_t]=\esssup_{\tau\in \mathcal{T}^{\mathbb{G}}_{t,T}}\E\left[\int_t^\tau \psi_2(r)dr+(Y^1_\tau-a_\tau)I_{\{\tau<T\}}\Big|\mathcal{G}_t\right].
    \end{split}\end{equation*}
\end{proposition}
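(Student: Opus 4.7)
The plan is to build $Y^1$ and $Y^2$ from the unique solution of an auxiliary doubly conditional reflected BSDE whose obstacles encode the switching costs, and then to recognise each of the components $Y^1,Y^2$ as the solution to a single conditional reflected BSDE so that the optimal-stopping representation of \cite{HHL} delivers the two formulas directly.

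First, I would consider the doubly conditional reflected BSDE with terminal value $\xi=0$, driver $f(s)=\psi_1(s)-\psi_2(s)$, lower obstacle $L_t=-D_t$ and upper obstacle $U_t=a_t$. Under Assumption \ref{assaD} the parameters satisfy (H1)--(H4): $f\in\mathcal{H}^2$ since $\psi_i\in\mathcal{H}^2$; $U_t-L_t=a_t+D_t$ is bounded away from $0$ because $\inf(a\wedge D)>0$; at terminal time, $-\E[D_T|\mathcal{G}_T]\leq 0\leq \E[a_T|\mathcal{G}_T]$; and (H4) is exactly the Mokobodski condition assumed for $(a,-D)$. Theorem \ref{thm2.7} then yields a unique solution $(Y,Z,K^+-K^-)$.

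Second, I would set
\begin{align*}
Y^1_t:=\E\Big[\int_t^T\psi_1(s)\,ds+K^+_T-K^+_t\,\Big|\,\mathcal{F}_t\Big],\qquad
Y^2_t:=\E\Big[\int_t^T\psi_2(s)\,ds+K^-_T-K^-_t\,\Big|\,\mathcal{F}_t\Big].
\end{align*}
Since $\psi_i\in\mathcal{H}^2$ and $K^\pm\in\mathcal{A}^2_{\mathbb{G}}\subset\mathcal{S}^2$, both $Y^i\in\mathcal{S}^2$. Martingale representation gives $Z^i\in\mathcal{H}^2$ with $Y^i_t=\int_t^T\psi_i(s)\,ds+K^{i}_T-K^i_t-\int_t^T Z^i_s\,dB_s$ where $K^1=K^+$, $K^2=K^-$. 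Subtracting the two equations and comparing with the BSDE for $Y$, the uniqueness of BSDEs forces $Y=Y^1-Y^2$.

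Third, I would verify that $(Y^1,Z^1,K^+)$ solves the single conditional reflected BSDE with terminal $0$, driver $\psi_1$, and lower obstacle $S^1_t:=Y^2_t-D_t$. The constraint follows from $\E[Y^1_t-(Y^2_t-D_t)|\mathcal{G}_t]=\E[Y_t+D_t|\mathcal{G}_t]=\E[Y_t-L_t|\mathcal{G}_t]\geq 0$, and the Skorokhod condition transfers in the same way, $\int_0^T\E[Y^1_t-S^1_t|\mathcal{G}_t]\,dK^+_t=\int_0^T\E[Y_t-L_t|\mathcal{G}_t]\,dK^+_t=0$. An entirely analogous verification shows that $(Y^2,Z^2,K^-)$ solves the single conditional reflected BSDE with terminal $0$, driver $\psi_2$, and lower obstacle $S^2_t:=Y^1_t-a_t$. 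Invoking the optimal-stopping representation for single conditional reflected BSDEs from \cite{HHL} (equivalently, the degenerate case $U\equiv+\infty$ of the Dynkin-game argument in the proof of Proposition \ref{prop2.6}) then gives
\[
\E[Y^i_t|\mathcal{G}_t]=\esssup_{\tau\in\mathcal{T}^{\mathbb{G}}_{t,T}}\E\Big[\int_t^\tau\psi_i(r)\,dr+S^i_\tau I_{\{\tau<T\}}\,\Big|\,\mathcal{G}_t\Big],\qquad i=1,2,
\]
which, upon substituting $S^1_\tau=Y^2_\tau-D_\tau$ and $S^2_\tau=Y^1_\tau-a_\tau$, are precisely the two claimed formulas.

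The main obstacle is the third step: one has to translate the two-sided conditional constraint and the pair of Skorokhod conditions of the doubly reflected equation into the constraints and Skorokhod conditions of two \emph{coupled} single conditional reflected BSDEs whose lower obstacles involve each other. The decomposition $Y=Y^1-Y^2$, which parallels the $J^1-J^2$ splitting in the Dynkin-game proof of Proposition \ref{prop2.6}, is what makes the single-reflection representation from \cite{HHL} simultaneously applicable to both $Y^1$ and $Y^2$.
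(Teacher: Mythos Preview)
Your proposal is correct and follows essentially the same route as the paper: build the auxiliary doubly conditional reflected BSDE with data $(0,\psi_1-\psi_2,-D,a)$ via Theorem~\ref{thm2.7}, define $Y^i$ from $\psi_i$ and $K^{\pm}$, check that each $(Y^i,Z^i,K^{\pm})$ solves a single conditional reflected BSDE with the cross-coupled obstacle, and then invoke the optimal-stopping representation from \cite{HHL}. The only cosmetic difference is that the paper reads off $Y=Y^1-Y^2$ directly from the conditional-expectation definitions rather than via BSDE uniqueness, but the two verifications are equivalent.
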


\begin{proof}
    By Theorem \ref{thm2.7}, the following doubly conditional reflected BSDE admits a unique solution $(Y,Z,K)\in\mathcal{S}^2\times\mathcal{H}^2\times\mathcal{BV}_{\mathbb{G}}^2$
    \begin{equation}\label{MRBSDE}
\begin{cases}
Y_t=\int_t^T (\psi_1(s)-\psi_2(s))ds-\int_t^T Z_s dB_s+K_T-K_t,\ t\in[0,T], \\
-\E[D_t|\mathcal{G}_t]\leq \E[Y_t|\mathcal{G}_t]\leq \E[a_t|\mathcal{G}_t], \ t\in[0,T],\\
K_t=K^+_t-K^-_t,\ K^+,K^-\in \mathcal{A}^2_\mathbb{G}, \\
 \int_0^T \E[Y_t+D_t|\mathcal{G}_t]dK_t^+=\int_0^T \E[a_t-Y_t|\mathcal{G}_t]dK^-_t=0.
\end{cases}
\end{equation}

Now for any $t\in[0,T]$, we set
\begin{align*}
&Y^1_t=\E_t\left[\int_t^T\psi_1(s)ds+K^+_T-K^+_t\right],\\
&Y^2_t=\E_t\left[\int_t^T\psi_2(s)ds+K^-_T-K^-_t\right].
\end{align*}
It is easy to check that $Y_t=Y^1_t-Y^2_t$, $t\in[0,T]$ and 
\begin{align*}
    \int_0^T \psi_1(s)ds+K^+_T,\int_0^T \psi_2(s)ds+K^-_T\in L^2(\mathcal{F}_T).
\end{align*}
Then, applying the martingale representation theorem, there exist $Z^i\in\mathcal{H}^2$, $i=1,2$, such that 
\begin{align*}
    &\E_t\left[\int_0^T \psi_1(s)ds+K^+_T\right]=\E\left[\int_0^T \psi_1(s)ds+K^+_T\right]+\int_0^t Z^1_sdB_s,\\
    &\E_t\left[\int_0^T \psi_2(s)ds+K^-_T\right]=\E\left[\int_0^T \psi_2(s)ds+K^-_T\right]+\int_0^t Z^2_sdB_s.
\end{align*}
Therefore, we have
\begin{equation*}
\begin{cases}
Y^1_t=\int_t^T \psi_1(s)ds-\int_t^T Z^1_s dB_s+K^+_T-K^+_t,\ t\in[0,T], \\
\E[Y^1_t|\mathcal{G}_t]\geq \E[Y^2_t-D_t|\mathcal{G}_t], \ t\in[0,T],\\
 \int_0^T (\E[Y^1_t|\mathcal{G}_t]-\E[Y^2_t-D_t|\mathcal{G}_t])dK_t^+=0
\end{cases}
\end{equation*}
and 
\begin{equation*}
\begin{cases}
Y^2_t=\int_t^T \psi_2(s)ds-\int_t^T Z^2_s dB_s+K^-_T-K^-_t,\ t\in[0,T], \\
\E[Y^2_t|\mathcal{G}_t]\geq \E[Y^1_t-a_t|\mathcal{G}_t], \ t\in[0,T],\\
 \int_0^T (\E[Y^2_t|\mathcal{G}_t]-\E[Y^1_t-a_t|\mathcal{G}_t])dK_t^-=0.
\end{cases}
\end{equation*}
That is, $(Y^1,Z^1,K^+)$ (resp., $(Y^2,K^2,K^-)$) can be viewed as the solution to the conditional reflected BSDEs with terminal value $0$, generator $\psi_1$ (resp., $\psi_2$) and obstacle $Y^2-D$ (resp., $Y^1-a$). By Theorem 3.2 in \cite{HHL}, we obtain the desired result.  
%\begin{align*}
%    &\E[Y^1_t]=\sup_{s\in[t,T]}\E\left[\int_t^s \psi_1(r,X_r)dr+(Y^2_s-D_s)I_{\{s<T\}}\right],\\
%    &\E[Y^2_t]=\sup_{s\in[t,T]}\E\left[\int_t^s \psi_2(r,X_r)dr+(Y^1_s-a_s)I_{\{s<T\}}\right].
%\end{align*}
%Setting $y^i_t=\E[Y^i_t]$, we obtain the desired result.
\end{proof}

\begin{remark}\label{optimaltime}
    In fact, modifying the proof of Theorem 3.2 in \cite{HHL}, for any $\mathbb{G}$-stopping time $\sigma$, we have 
    \begin{equation}\label{optimal stopping}\begin{split}
        &\E[Y^1_\sigma|\mathcal{G}_\sigma]=\esssup_{\tau\in \mathcal{T}^{\mathbb{G}}_{\sigma,T}}\E\left[\int_\sigma^\tau \psi_1(r)dr+(Y^2_\tau-D_\tau)I_{\{\tau<T\}}\Big|\mathcal{G}_\sigma\right],\\
        &\E[Y^2_\sigma|\mathcal{G}_\sigma]=\esssup_{\tau\in \mathcal{T}^{\mathbb{G}}_{\sigma,T}}\E\left[\int_\sigma^\tau \psi_2(r)dr+(Y^1_\tau-a_\tau)I_{\{\tau<T\}}\Big|\mathcal{G}_\sigma\right],
    \end{split}\end{equation}
  where $\mathcal{T}_{\sigma,T}^{\mathbb{G}}$ is the collection of $\mathbb{G}$-stopping times $\tau$ with $\sigma\leq \tau\leq T$.   Moreover, the optimal stopping times of problems \eqref{optimal stopping} exist, which can be represented as follows respectively
    \begin{align*}
        &\tau^*_1=\inf\{s\geq \sigma: \E[Y^1_s|\mathcal{G}_s]=\E[Y^2_s-D_s|\mathcal{G}_s]\}\wedge T, \\
        &\tau^*_2=\inf\{s\geq \sigma: \E[Y^2_s|\mathcal{G}_s]=\E[Y^1_s-a_s|\mathcal{G}_s]\}\wedge T.
    \end{align*}
\end{remark}

Now, we can state the main result of this section.

\begin{proposition}\label{prop2.2}
    Let $Y^i$ be given as in Proposition \ref{y1y2}, we have $Y^1_0=\sup_{\delta\in\mathfrak{D}}J(\delta)$. Furthermore, the strategy $\delta^*=\{\tau_n^*\}_{n\geq 1}$ is optimal, where $\tau^*_0=0$ and for any $n\geq 1$
    \begin{align*}
        &\tau^*_{2n-1}=\inf\{s\geq \tau^*_{2n-2}: \E[Y^1_s|\mathcal{G}_s]=\E[Y^2_s-D_s|\mathcal{G}_s]\}\wedge T,\\
        &\tau^*_{2n}=\inf\{s\geq \tau^*_{2n-1}: \E[Y^2_s|\mathcal{G}_s]=\E[Y^1_s-a_s|\mathcal{G}_s]\}\wedge T.
    \end{align*}
\end{proposition}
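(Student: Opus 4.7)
The plan is to iterate the optimal-stopping characterizations of Remark~\ref{optimaltime} along the strategy, first with an arbitrary admissible $\delta$ to derive $Y^1_0\ge J(\delta)$, and then along $\delta^*$—whose stopping times are precisely the optimal ones identified there—to derive $Y^1_0=J(\delta^*)$.

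For the upper bound, fix $\delta=\{\tau_n\}_{n\ge1}\in\mathfrak{D}$. Applying the representation for $Y^1$ at $\sigma=0$ with the suboptimal choice $\tau=\tau_1$, taking expectations, and using that $Y^1_0$ is $\mathcal{F}_0$-measurable hence deterministic, I obtain
\begin{align*}
Y^1_0 \;\ge\; \E\Big[\int_0^{\tau_1}\psi_1(r)\,dr+(Y^2_{\tau_1}-D_{\tau_1})I_{\{\tau_1<T\}}\Big].
\end{align*}
Next, applying the representation for $Y^2$ at $\sigma=\tau_1$ with $\tau=\tau_2$, multiplying by the $\mathcal{G}_{\tau_1}$-measurable indicator $I_{\{\tau_1<T\}}$, and using that $\{\tau_2<T\}\subseteq\{\tau_1<T\}$, yields a lower bound for $\E[Y^2_{\tau_1}I_{\{\tau_1<T\}}]$ in terms of $Y^1_{\tau_2}$. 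Iterating $n$ times produces
\begin{align*}
Y^1_0 \;\ge\; \E\Big[\int_0^{\tau_{2n}}\Phi(s,u_s)\,ds-\sum_{k=1}^{n}\bigl(D_{\tau_{2k-1}}I_{\{\tau_{2k-1}<T\}}+a_{\tau_{2k}}I_{\{\tau_{2k}<T\}}\bigr)+Y^1_{\tau_{2n}}I_{\{\tau_{2n}<T\}}\Big].
\end{align*}
Because $\delta\in\mathfrak{D}$, for a.e.\ $\omega$ there exists $N(\omega)<\infty$ with $\tau_n(\omega)=T$ for $n\ge N(\omega)$, so the boundary term vanishes for $n$ large and $\int_0^{\tau_{2n}}\Phi(s,u_s)\,ds\to\int_0^T\Phi(s,u_s)\,ds$. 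Dominated convergence, using $\psi_i\in\mathcal{H}^2$ and $Y^1\in\mathcal{S}^2$, then gives $Y^1_0\ge J(\delta)$; combined with the previous proposition this shows $Y^1_0\ge\sup_{\delta\in\mathfrak{D}_a}J(\delta)$.

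For optimality, I run the same iteration along $\delta^*$. By Remark~\ref{optimaltime} each step is an equality at the optimal stopping times, so for every $n\ge1$,
\begin{align*}
Y^1_0 \;=\; \E\Big[\int_0^{\tau_{2n}^*}\Phi(s,u_s^*)\,ds-\sum_{k=1}^{n}\bigl(D_{\tau_{2k-1}^*}I_{\{\tau_{2k-1}^*<T\}}+a_{\tau_{2k}^*}I_{\{\tau_{2k}^*<T\}}\bigr)+Y^1_{\tau_{2n}^*}I_{\{\tau_{2n}^*<T\}}\Big].
\end{align*}
Rearranging and bounding the right-hand side uniformly in $n$ via $\psi_i\in\mathcal{H}^2$ and $Y^1\in\mathcal{S}^2$ shows that $\sup_n\E\bigl[\sum_{k=1}^n(D_{\tau_{2k-1}^*}I+a_{\tau_{2k}^*}I)\bigr]<\infty$. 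Since the summands are nonnegative, monotone convergence forces $\sum_{k\ge 1}\bigl(D_{\tau_{2k-1}^*}I_{\{\tau_{2k-1}^*<T\}}+a_{\tau_{2k}^*}I_{\{\tau_{2k}^*<T\}}\bigr)<\infty$ a.s., and the strict positivity $\inf_{(t,\omega)}(a_t\wedge D_t)>0$ from Assumption~\ref{assaD} implies only finitely many indicators can be non-zero, i.e.\ $\delta^*\in\mathfrak{D}$. Consequently $Y^1_{\tau_{2n}^*}I_{\{\tau_{2n}^*<T\}}\to 0$ a.s.\ and $\int_0^{\tau_{2n}^*}\Phi(s,u_s^*)\,ds\to\int_0^T\Phi(s,u_s^*)\,ds$; a final dominated-convergence argument yields $Y^1_0=J(\delta^*)$.

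The main obstacle is the slightly delicate bookkeeping of the indicators $I_{\{\tau_n<T\}}$ in the telescoping (ensuring profit, cost and boundary terms line up correctly at each iteration), together with the mildly circular step in which one first establishes the equality at every finite $n$ using only that $\delta^*$ is a candidate sequence of $\mathbb{G}$-stopping times, then exploits strict positivity of $a\wedge D$ to upgrade $\delta^*$ to a finite strategy in $\mathfrak{D}$, and only afterwards passes to the limit to conclude $J(\delta^*)=Y^1_0$.
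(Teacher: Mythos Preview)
Your proposal is correct and follows essentially the same approach as the paper: iterate the optimal-stopping representations of Remark~\ref{optimaltime}, first along an arbitrary $\delta\in\mathfrak{D}$ to obtain the upper bound, then along $\delta^*$ with equalities. The only cosmetic difference is in the finiteness argument for $\delta^*$: the paper argues by contradiction (assuming $\P(\{\tau^*_n<T\ \forall n\})>0$ and deriving $Y^1_0=-\infty$), whereas you bound the expected cost sum uniformly in $n$ and invoke monotone convergence together with $\inf(a\wedge D)>0$; these are two phrasings of the same observation.
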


\begin{proof}
  \textbf{Step 1.}  We first show that for any $\delta\in\mathfrak{D}$, we have $Y^1_0\geq J(\delta)$.  For any given a finite strategy $\delta=\{\tau_n\}_{n\geq 1}$, by Remark \ref{optimaltime}, we have
    \begin{align*}
        &Y^1_0=\E[Y^1_0]\geq \E\left[\int_0^{\tau_1}\psi_1(r)dr+(Y^2_{\tau_1}-D_{\tau_1})I_{\{\tau_1<T\}}\right]\\
        & \ \ \ \  =\E\left[\int_0^{\tau_1}\psi_1(r)dr+\E[Y^2_{\tau_1}|\mathcal{G}_{\tau_1}]I_{\{\tau_1<T\}}-D_{\tau_1}I_{\{\tau_1<T\}}\right],\\
        &\E[Y^2_{\tau_1}|\mathcal{G}_{\tau_1}]\geq \E\left[\int_{\tau_1}^{\tau_2}\psi_2(r)dr+(Y^1_{\tau_2}-a_{\tau_2})I_{\{\tau_2<T\}}\Big|\mathcal{G}_{\tau_1}\right].
    \end{align*}
    Noting the facts that $\{\tau_2<T\}\subset \{\tau_1<T\}$ and $\{\tau_1=T\}\subset\{\tau_2=T\}$,  we have
    \begin{align*}
        Y^1_0\geq &\E\left[\int_0^{\tau_1}\psi_1(r)dr+\int_{\tau_1}^{\tau_2}\psi_2(r)dr-D_{\tau_1}I_{\{\tau_1<T\}}-a_{\tau_2}I_{\{\tau_2<T\}}+Y^1_{\tau_2}I_{\{\tau_2<T\}}\right]\\
        =&\E\left[\int_{0}^{\tau_2}\Psi(r)dr-D_{\tau_1}I_{\{\tau_1<T\}}-a_{\tau_2}I_{\{\tau_2<T\}}+Y^1_{\tau_2}I_{\{\tau_2<T\}}\right].
    \end{align*}
    Now, following this procedure as many times as necessary yields that 
\begin{align*}
    Y^1_0\geq \E\left[\int_{0}^{\tau_{2n}}\Psi(r)dr-\sum_{k=1}^n \left(D_{\tau_{2k-1}}I_{\{\tau_{2k-1}<T\}}+a_{\tau_{2k}}I_{\{\tau_{2k}<T\}}\right)+Y^1_{\tau_{2n}}I_{\{\tau_{2n}<T\}}\right].
\end{align*}
Since the strategy is finite, letting $n$ go to infinity, we obtain that $Y^1_0\geq J(\delta)$.

    \textbf{Step 2.} Now, we show that $\delta^*$ is an optimal strategy and it is finite. By Remark \ref{optimaltime}, we have 
    \begin{align*}
        &Y^1_0=\E[Y^1_0]=\E\left[\int_0^{\tau^*_1}\psi_1(r)dr+(Y^2_{\tau^*_1}-D_{\tau^*_1})I_{\{\tau^*_1<T\}}\right],\\
        &\E[Y^2_{\tau^*_1}|\mathcal{G}_{\tau^*_1}]=\E\left[\int_{\tau^*_1}^{\tau^*_2}\psi_2(r)dr+(Y^1_{\tau^*_2}-a_{\tau^*_2})I_{\{\tau^*_2<T\}}\Big|\mathcal{G}_{\tau^*_1}\right].
    \end{align*}
    By a similar analysis as Step 1, we have 
    \begin{align*}
        Y^1_0=&\E\left[\int_0^{\tau^*_1}\psi_1(r)dr+\int_{\tau^*_1}^{\tau^*_2}\psi_2(r)dr-D_{\tau^*_1}I_{\{\tau^*_1<T\}}-a_{\tau^*_2}I_{\{\tau^*_2<T\}}+Y^1_{\tau^*_2}I_{\{\tau^*_2<T\}}\right]\\
        =&\E\left[\int_{0}^{\tau^*_2}\Psi(r)dr-D_{\tau^*_1}I_{\{\tau^*_1<T\}}-a_{\tau^*_2}I_{\{\tau^*_2<T\}}+Y^1_{\tau^*_2}I_{\{\tau^*_2<T\}}\right].
    \end{align*}
    Repeating this procedure, for any $n\geq 1$, we obtain that 
    \begin{equation}\label{y10}
        Y^1_0=\E\left[\int_{0}^{\tau^*_{2n}}\Psi(r)dr-\sum_{k=1}^n \left(D_{\tau^*_{2k-1}}I_{\{\tau^*_{2k-1}<T\}}+a_{\tau^*_{2k}}I_{\{\tau^*_{2k}<T\}}\right)+Y^1_{\tau^*_{2n}}I_{\{\tau^*_{2n}<T\}}\right].
    \end{equation}
    We claim that the strategy is finite. Otherwise, suppose that $\P(A)>0$, where $A=\{\omega:\tau^*_n(\omega)<T,\forall n\geq1\}$. For any $n\geq 1$, simple calculation yields that 
    \begin{align*}
        Y^1_0&\leq \E\Bigg[\int_{0}^T(|\psi_1(r)|\vee|\psi_2(r)|)dr-\sum_{k=1}^n \left(D_{\tau^*_{2k-1}}I_{\{\tau^*_{2k-1}<T\}}+a_{\tau^*_{2k}}I_{\{\tau^*_{2k}<T\}}\right)I_A\\
        &\ \ \ \ \ \ \ \ +\sup_{s\in[0,T]}|Y^1_s|-\sum_{k=1}^n \left(D_{\tau^*_{2k-1}}I_{\{\tau^*_{2k-1}<T\}}+a_{\tau^*_{2k}}I_{\{\tau^*_{2k}<T\}}\right)I_{A^c}\Bigg]\\
        &\leq C-\E\left[\sum_{k=1}^n \left(D_{\tau^*_{2k-1}}I_{\{\tau^*_{2k-1}<T\}}+a_{\tau^*_{2k}}I_{\{\tau^*_{2k}<T\}}\right)I_A\right]\rightarrow-\infty, \textrm{ as } n\rightarrow\infty,
    \end{align*}
    which is a contradiction. Now, letting $n$ approach infinity in \eqref{y10}, we obtain that $Y^1_0=J(\delta^*)$. The proof is complete.
\end{proof}

\begin{remark}
    When $\mathbb{G}=\mathbb{F}$, Proposition \ref{y1y2} and Proposition \ref{prop2.2} degenerates into Theorem 3.2 and Proposition 2.2 in \cite{HJ}, respectively.
\end{remark}

\section*{Acknowledgments}
	This work was supported  by the National Natural Science Foundation of China (No. 12301178), the Natural Science Foundation of Shandong Province for Excellent Young Scientists Fund Program (Overseas) (No. 2023HWYQ-049), the Natural Science Foundation of Shandong Province (No. ZR2023ZD35) and  the Qilu Young Scholars Program of Shandong University. %The authors thank the anonymous referee for the careful reading of the paper and many valuable comments.

\appendix
\renewcommand\thesection{Appendix}
\section{ }
\renewcommand\thesection{A}
\setcounter{equation}{0}
\renewcommand{\theequation}{A\arabic{equation}}

In this section, we construct the solution to the doubly conditional reflected BSDEs \eqref{nonlinearyz} via the penalization method. More precisely,  consider the following family of conditional expectation BSDEs parameterized by $n$
\begin{equation}\begin{split}\label{panelization}
Y_t^n=&\xi+\int_t^T f(s,Y_s^n,Z_s^n)ds
+\int_t^T n(\E[Y_s^n-L_s|\mathcal{G}_s])^-ds\\
&-\int_t^T n(\E[Y_s^n-U_s|\mathcal{G}_s])^+ds-\int_t^T Z_s^n dB_s.
\end{split}\end{equation}
Throughout this section, $\mathbb{G}=\{\mathcal{G}_t\}_{t\in[0,T]}$ is a subfiltration of $\mathbb{F}$ without any continuity assumption as in Assumption \ref{assG}. Suppose that $f$ satisfies (H1), $\xi\in L^2(\mathcal{F}_T)$ and $L,U\in \mathcal{H}^2$ with $L\leq U$. By Theorem 2.7 in in \cite{Li}, for each $n\in\mathbb{N}$, there exists a unique pair of solution $(Y^n,Z^n)\in \mathcal{S}^2\times \mathcal{H}^2$ to the above equation. Set $K^n=K^{n,+}-K^{n,-}$, where 
$$K^{n,+}_t=\int_0^t n(\E[Y_s^n-L_s|\mathcal{G}_s])^- ds, \ K^{n,-}_t=\int_0^t n(\E[Y_s^n-U_s|\mathcal{G}_s])^- ds.$$  
We claim that $(Y^n,Z^n,K^n)$ converges to $(Y,Z,K)$ and the triple of  limiting processes is indeed the solution to the doubly conditional reflected BSDE \eqref{nonlinearyz}. It should be pointed out that, different from the penalized conditional expectation BSDEs for single reflected case studied in \cite{Li}, $K^n$ is not monotone in the present framework. This leads to the difficulty to  derive the uniform $\mathcal{S}^2$-estimates for $\{Y^n\}$ and $\{K^n\}$, respectively. To this end, we first show that $\{Y^n\}$ is uniformly bounded under appropriate ``norm".  In the sequel, $C$ will always represent a constant independent of $n$, which may vary from line to line. %In the following of this section, $C$ will always be a positive constant  depending on $T,\lambda$. We first establish the estimates for $Y^n$ and $Z^n$ uniformly in $n$. 

\begin{lemma}\label{estimateYnZn}
Given $\xi\in L^2(\mathcal{F}_T)$ and $L,U\in\mathcal{H}^2$, suppose that $f$ satisfies (H1). We assume that there exists an It\^{o} process $I$ with representation
\begin{align*}
    I_t=I_0+\int_0^t b_s ds+\int_0^t \sigma_sdB_s,
\end{align*}
where $b,\sigma\in\mathcal{H}^2$, such that for any $t\in[0,T]$, $\E[L_t|\mathcal{G}_t]\leq \E[I_t|\mathcal{G}_t]\leq \E[U_t|\mathcal{G}_t]$. Then, we have
\begin{equation}\label{estimate1}
\sup_{t\in[0,T]}\E\left[|Y_t^n|^2|\right]+\E\left[\int_0^T |Z_t^n|^2dt\right]\leq C\E\left[|\xi|^2+\sup_{t\in[0,T]}|I_t|^2+\int_0^T \left(|f(s,0,0)|^2+|b_s|^2+|\sigma_s|^2\right)ds\right].
\end{equation}
Moreover, suppose that %the following condition holds.
%\begin{itemize}
    %\item[(H')] 
there exists a constant $p>2$, such that $\xi\in L^p(\mathcal{F}_T)$ and $b,\sigma,f(\cdot,0,0)\in \mathcal{H}^p$, i.e.,
    \begin{align*}
        &\E[|\xi|^p]<\infty, \ \E\left[\left(\int_0^T |f(s,0,0)|^2ds\right)^{\frac{p}{2}}\right]<\infty, \\
        &\E\left[\left(\int_0^T |b_s|^2ds\right)^{\frac{p}{2}}\right]<\infty, \ \E\left[\left(\int_0^T |\sigma_s|^2ds\right)^{\frac{p}{2}}\right]<\infty.
    \end{align*}
%\end{itemize}
Then, we have 
\begin{equation}\label{estimate2}
    \E\left[\sup_{t\in[0,T]}\left(\E\left[|Y^n_t|^2|\mathcal{G}_t\right]\right)^{\frac{p}{2}}\right]\leq C\E\left[|\xi|^p+\sup_{t\in[0,T]}|I_t|^p+\left(\int_0^T \left(|f(s,0,0)|^2+|b_s|^2+|\sigma_s|^2\right)ds\right)^{\frac{p}{2}}\right].
\end{equation}
\end{lemma}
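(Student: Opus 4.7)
The plan is to apply Itô's formula to $|Y^n_t - I_t|^2$ and then take $\mathcal{G}_t$-conditional expectation, exploiting the key structural fact that $\E[I_s|\mathcal{G}_s]$ lies between $\E[L_s|\mathcal{G}_s]$ and $\E[U_s|\mathcal{G}_s]$ to make the penalization contribution non-positive. After standard Young manipulations this produces a backward conditional Gronwall inequality whose solution is dominated by an explicit $\mathbb{G}$-martingale $\Xi_t$; estimate (A.1) follows by taking expectation, and (A.2) follows by Doob's $L^{p/2}$ maximal inequality applied to this martingale.

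Concretely, Itô applied to $|Y^n_t-I_t|^2$ yields, after taking $\E[\cdot|\mathcal{G}_t]$ (the Brownian integral vanishes by a routine localization argument),
\begin{equation*}
\E[|Y^n_t-I_t|^2|\mathcal{G}_t]+\E\!\left[\int_t^T|Z^n_s-\sigma_s|^2ds\,\Big|\mathcal{G}_t\right]=\E[|\xi-I_T|^2|\mathcal{G}_t]+2\E\!\left[\int_t^T(Y^n_s-I_s)(f(s,Y^n_s,Z^n_s)+b_s)ds\,\Big|\mathcal{G}_t\right]+2\E\!\left[\int_t^T(Y^n_s-I_s)dK^n_s\,\Big|\mathcal{G}_t\right].
\end{equation*}
For the penalization term, by Fubini and tower property, the inside of the expectation becomes $(\E[Y^n_s|\mathcal{G}_s]-\E[I_s|\mathcal{G}_s])\,n(\E[Y^n_s-L_s|\mathcal{G}_s])^-$ minus the symmetric term for $U$; the assumption $\E[L_s|\mathcal{G}_s]\leq\E[I_s|\mathcal{G}_s]\leq\E[U_s|\mathcal{G}_s]$ gives $\E[Y^n_s|\mathcal{G}_s]-\E[I_s|\mathcal{G}_s]\leq\E[Y^n_s-L_s|\mathcal{G}_s]$, whence the product is $\leq\E[Y^n_s-L_s|\mathcal{G}_s]\cdot n(\E[Y^n_s-L_s|\mathcal{G}_s])^-\leq 0$, and similarly for the upper piece.

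Using $|f(s,Y^n_s,Z^n_s)+b_s|\leq|f(s,0,0)|+|b_s|+\lambda(|Y^n_s-I_s|+|I_s|+|Z^n_s-\sigma_s|+|\sigma_s|)$ and Young's inequality, the $|Z^n-\sigma|^2$ cross-term can be absorbed into the left-hand side. Setting $\phi_t:=\E[|Y^n_t-I_t|^2|\mathcal{G}_t]$ and $\Xi_t:=\E[\Xi|\mathcal{G}_t]$ with $\Xi:=|\xi-I_T|^2+\int_0^T(|f(s,0,0)|^2+|b_s|^2+C|I_s|^2+C|\sigma_s|^2)ds$, this leaves
\begin{equation*}
\phi_t+\tfrac{1}{2}\E\!\left[\int_t^T|Z^n_s-\sigma_s|^2ds\,\Big|\mathcal{G}_t\right]\leq\Xi_t+C\int_t^T\E[\phi_s|\mathcal{G}_t]ds.
\end{equation*}
The backward Gronwall step I would do as follows: fix $r\leq t$, take $\E[\cdot|\mathcal{G}_r]$, and use $\mathcal{G}_r\subset\mathcal{G}_t$ to get $g(t):=\E[\phi_t|\mathcal{G}_r]\leq\Xi_r+C\int_t^Tg(s)ds$; the deterministic backward Gronwall lemma gives $g(t)\leq\Xi_r e^{C(T-t)}$, and evaluating at $t=r$ yields the pathwise bound $\phi_r\leq C\Xi_r$. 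Reinserting, also $\E[\int_r^T|Z^n_s-\sigma_s|^2ds|\mathcal{G}_r]\leq C\Xi_r$.

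From here, (A.1) is immediate: taking expectation of $\phi_t\leq C\Xi_t$ and using $\E[\Xi_t]=\E[\Xi]$, bounded by the right-hand side of (A.1); the $Z^n$-bound is analogous. For (A.2), the decomposition $\E[|Y^n_t|^2|\mathcal{G}_t]\leq 2\phi_t+2\E[|I_t|^2|\mathcal{G}_t]\leq C\Xi_t+2\widetilde{M}_t$, where $\widetilde{M}_t:=\E[\sup_s|I_s|^2|\mathcal{G}_t]$ is also a nonnegative $\mathbb{G}$-martingale, gives $(\E[|Y^n_t|^2|\mathcal{G}_t])^{p/2}\leq C\Xi_t^{p/2}+C\widetilde{M}_t^{p/2}$. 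Since $p/2>1$, Doob's $L^{p/2}$ maximal inequality applied to each of the submartingales $\Xi_t^{p/2}$ and $\widetilde{M}_t^{p/2}$ yields $\E[\sup_t\Xi_t^{p/2}]\leq C\E[\Xi^{p/2}]$ and $\E[\sup_t\widetilde{M}_t^{p/2}]\leq C\E[\sup_s|I_s|^p]$; using $(a+b+c+d)^{p/2}\leq C(a^{p/2}+b^{p/2}+c^{p/2}+d^{p/2})$ and $(\int_0^T|I_s|^2ds)^{p/2}\leq T^{p/2}\sup_s|I_s|^p$ to unpack $\E[\Xi^{p/2}]$ produces the desired right-hand side of (A.2).

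The main obstacle is the Gronwall step: the inequality mixes $\phi_t$ (a $\mathcal{G}_t$-measurable quantity) with integrals of $\E[\phi_s|\mathcal{G}_t]$, and one cannot apply Gronwall directly. The trick of projecting onto $\mathcal{G}_r$ to reduce, pathwise in $\omega$, to a one-variable deterministic backward ODE in the time variable, is exactly what converts the conditional bound into the clean pointwise comparison $\phi_t\leq C\Xi_t$ needed to feed into Doob's inequality.
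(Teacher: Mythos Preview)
Your proposal is correct and follows essentially the same route as the paper: shift by $I$, apply It\^{o}'s formula to the squared difference, take $\mathcal{G}_t$-conditional expectation, use the barrier assumption to make the penalization contribution nonpositive, and finish with Doob's maximal inequality. The only notable difference is that the paper applies It\^{o}'s formula to $e^{\beta t}|Y^n_t-I_t|^2$ with $\beta=2+2\lambda+4\lambda^2$, so that the extra term $\beta\int_t^T e^{\beta s}|\bar Y^n_s|^2\,ds$ on the left absorbs the quadratic cross term directly and yields the pointwise bound $\phi_t\leq C\,\Xi_t$ in one step, bypassing your backward conditional Gronwall argument; this is the standard BSDE shortcut and is slightly cleaner, but your projection-onto-$\mathcal{G}_r$ Gronwall trick is equally valid and gives the same conclusion.
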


\begin{proof}
Set $\bar{Y}^n_t=Y^n_t-I_t$, $\bar{Z}^n_t=Z^n_t-\sigma_t$, $\bar{U}_t=U_t-I_t$, $\bar{L}_t=L_t-I_t$ and $\bar{f}^n_t=f(t,Y^n_t,Z^n_t)+b_t$. For any fixed positive constant $\beta$, applying It\^{o}'s formula to $e^{\beta t}|\bar{Y}^n_t|^2$,  we have
\begin{equation}\label{eq1.61}\begin{split}
&e^{\beta t}|\bar{Y}^n_t|^2+\int_t^T \beta e^{\beta s}|\bar{Y}^n_s|^2 ds+\int_t^T e^{\beta s}|\bar{Z}_s^n|^2 ds\\
=&e^{\beta T}|\xi-I_T|^2+\int_t^T 2e^{\beta s}\bar{Y}^n_s\bar{f}^n_sds-\int_t^T 2e^{\beta s}\bar{Y}_s^n \bar{Z}_s^ndB_s\\
&+\int_t^T 2ne^{\beta s}\bar{Y}_s^n(\E[\bar{Y}_s^n-\bar{L}_s|\mathcal{G}_s])^-ds-\int_t^T 2ne^{\beta s}\bar{Y}_s^n(\E[\bar{Y}_s^n-\bar{U}_s|\mathcal{G}_s])^+ds.
\end{split}\end{equation}
It is easy to check that 
\begin{equation}\label{eq1.62}\begin{split}
2\bar{Y}^n_s\bar{f}^n_s
\leq |f(s,0,0)|^2+\frac{1}{2}|\bar{Z}^n_s|^2+|b_s|+|\sigma_s|^2+|I_s|^2+(2+2\lambda+4\lambda^2)|\bar{Y}^n_s|^2.
%\leq &|a_s|^2+2|f(s,0,0)|^2+2\lambda^2|r_s|^2+\frac{1}{2}|Z^n_s|^2+2(1+\lambda+\lambda^2)|\widetilde{Y}^n_s|^2
\end{split}\end{equation}
Noting that $\E[\bar{L}_s|\mathcal{G}_s]\leq 0$, simple calculation yields that
\begin{equation}\label{eq1.62'}
    \begin{split}
        &n\E\left[\int_t^T e^{\beta s}\bar{Y}_s^n(\E[\bar{Y}_s^n-\bar{L}_s|\mathcal{G}_s])^-ds\Big|\mathcal{G}_t\right]\\
        =&n\int_t^T\E\left[e^{\beta s}\bar{Y}_s^n(\E[\bar{Y}_s^n-\bar{L}_s|\mathcal{G}_s])^-|\mathcal{G}_t\right]ds\\
     %   =&n\int_t^T\E\left[e^{\beta s}({Y}_s^n-S_s)(\E[Y_s^n-S_s|\mathcal{G}_s])^-|\mathcal{G}_t\right]ds+n\int_t^T\E\left[e^{\beta s}{S}_s(\E[Y_s^n-S_s|\mathcal{G}_s])^-|\mathcal{G}_t\right]ds\\
        =&n\int_t^T\E\left[e^{\beta s}\E[\bar{Y}_s^n|\mathcal{G}_s](\E[\bar{Y}_s^n-\bar{L}_s|\mathcal{G}_s])^-|\mathcal{G}_t\right]ds\leq 0.
    \end{split}
\end{equation}
Similarly, we have 
\begin{equation}\label{eq1.62''''}
    n\E\left[\int_t^T e^{\beta s}\bar{Y}_s^n(\E[\bar{Y}_s^n-\bar{U}_s|\mathcal{G}_s])^+ds\Big|\mathcal{G}_t\right]\geq 0.
\end{equation}
Set $\beta=2+2\lambda+4\lambda^2$. Plugging Eq. \eqref{eq1.62} into Eq. \eqref{eq1.61}, taking conditional expectations w.r.t. $\mathcal{G}_t$ on both sides and noting \eqref{eq1.62'}-\eqref{eq1.62''''}, we have
\begin{equation*}
\E\left[|\bar{Y}^n_t|^2+\int_t^T |\bar{Z}_s^n|^2 ds\Big|\mathcal{G}_t\right]\leq C\E\left[|\xi|^2+\sup_{s\in[0,T]}|I_s|^2+\int_0^T \left(|f(s,0,0)|^2+|\sigma_s|^2+|b_s|^2\right)ds\Big|\mathcal{G}_t\right].
\end{equation*}
Recalling the definition of $\bar{Y}^n$ and $\bar{Z}^n$, it follows that 
\begin{equation}\label{ynzn}
\E\left[|{Y}^n_t|^2+\int_t^T |{Z}_s^n|^2 ds\Big|\mathcal{G}_t\right]\leq C\E\left[|\xi|^2+\sup_{s\in[0,T]}|I_s|^2+\int_0^T \left(|f(s,0,0)|^2+|\sigma_s|^2+|b_s|^2\right)ds\Big|\mathcal{G}_t\right].
\end{equation}
Clearly,  estimate \eqref{estimate1} holds. By Eq. \eqref{ynzn} and Doob's maximal inequality, we obtain estimate \eqref{estimate2}.
%Taking expectations on both sides, we obtain that 
%\begin{equation}
%\label{eq1.62''}
%\begin{split}
%\sup_{t\in[0,T]}\E\left[|\bar{Y}_t^n|^2|\right]&\leq C\E\left[|\xi|^2+\sup_{t\in[0,T]}|I_t|^2+\int_0^T \left(|f(s,0,0)|^2+|b_s|^2+|\sigma_s|^2\right)ds\right],\\
%\E\left[\int_0^T |\bar{Z}_t^n|^2dt\right]&\leq C\E\left[|\xi|^2+\sup_{t\in[0,T]}|I_t|^2+\int_0^T \left(|f(s,0,0)|^2+|b_s|^2+|\sigma_s|^2\right)ds\right].
%\end{split}
%\end{equation}
\end{proof}

\begin{remark}
    Suppose that $\mathbb{G}=\mathbb{F}$, that is, consider the doubly reflected BSDE and its penalized BSDEs. It is easy to check that  
    \begin{align*}
        n\int_t^T e^{\beta s}\bar{Y}_s^n(\bar{Y}_s^n-\bar{L}_s)^-ds\leq 0, \ n\int_t^T e^{\beta s}\bar{Y}_s^n(\bar{Y}_s^n-\bar{U}_s)^-ds\geq 0. 
    \end{align*}
    Therefore, choosing $\beta=2+2\lambda+4\lambda^2$, Eq. \eqref{eq1.61} directly implies that 
    \begin{align*}
        \sup_{t\in[0,T]}|\bar{Y}^n_t|^2\leq C\left\{|\xi|^2+\sup_{s\in[0,T]}|I_s|^2+\int_0^T \left(|f(s,0,0)|^2+|\sigma_s|^2+|b_s|^2\right)ds\right\}.
    \end{align*}
    Then, we may obtain the uniform $\mathcal{S}^2$-estimate for $\{Y^n\}$. However, the main difficulty for the case of conditional reflection is that only Eqs. \eqref{eq1.62'}-\eqref{eq1.62''''} hold, which cannot be applied directly to establish the uniform $\mathcal{S}^2$-estimate for $\{Y^n\}$.
\end{remark}

%In the following, we show that the running supremum of the negative part of the conditional expectation $\E[Y^n_t-S_t|\mathcal{G}_t]$ converges to $0$ under the norm $\|\cdot\|_{\mathcal{S}^2}$, which plays an important role as Lemma 6.1 in \cite{KKPPQ} for the classical reflected BSDEs and Eqs. (6.14)-(6.15) in \cite{CK} for the doubly reflected BSDEs. Moreover, we may obtain the convergence rate and the necessity of the explicit rate is given in Remark \ref{r2} below. Due to the lack of the comparison theorem for general conditional expectation BSDEs (i.e., for the case that the driver is not nondecreasing w.r.t. $y'$), the method used in \cite{KKPPQ} is not valid. In order to obtain the desired result, motivated by \cite{CK} (see Theorem 6.5 and Eq. (6.4)), we propose the following additional assumptions on the driver $f$ and the obstacle $S$.

%For the penalized conditional expectation BSDEs, since $K^n$ is not monotone, we cannot derive the uniform $\mathcal{S}^2$-estimates for $\{Y^n\}$ and $\{K^n\}$ under the assumption in Lemma \ref{estimateYnZn}. 
In the following, we show that the running supremum (resp., infimum) of the negative (resp., positive) part of the conditional expectation $\E[Y^n_t-L_t|\mathcal{G}_t]$ (resp., $\E[Y^n_t-U_t|\mathcal{G}_t]$) converges to $0$ under the norm $\|\cdot\|_{\mathcal{S}^2}$. Moreover, the explicit convergence rate is established under some additional  assumptions for the driver $f$ and the obstacles $L,U$ (similar assumptions can be found in \cite{EKPPQ} for the doubly reflected BSDEs). This result plays a key role both in  establishing the uniform $\mathcal{S}^2$-estimates for $\{Y^n\}$, $\{K^{n,+}\}$ and $\{K^{n,-}\}$ (see Corollary \ref{cor1} below) and in proving the convergence result for $\{Y^n\}$, $\{Z^n\}$ and $\{K^n\}$ (see Theorem \ref{main2} below).

\begin{itemize}
    \item[(H1')] $f$ satisfies either of the following conditions: 
    \begin{itemize}
    \item for any $(\omega,t,z)\in\Omega\times[0,T]\times\mathbb{R}^d$, there exists a constant $M>0$ such that $|f(\omega,t,0,z)|\leq M$; 
    \item $f$ is independent of $z$ and 
    \begin{align*}
        \E\left[\sup_{t\in[0,T]}|f(t,0)|^2\right]<\infty.
    \end{align*}
    \end{itemize}
    \item[(H2')] $L,S$ are It\^{o} processes with representation
    \begin{align*}
        &L_t=L_0+\int_0^t b^L_s ds+\int_0^t \sigma^L_s dB_s,\\
        &U_t=U_0+\int_0^t b^U_s ds+\int_0^t \sigma^U_s dB_s,
    \end{align*}
    where $b^L,b^U\in\mathcal{S}^2$ and $\sigma^L,\sigma^U\in\mathcal{H}^2$. Moreover, for any $t\in[0,T]$, we have $\E[L_t|\mathcal{G}_t]\leq \E[U_t|\mathcal{G}_t]$. 
\end{itemize}

\begin{lemma}\label{estimateYn-S}
Let all assumptions in Lemma \ref{estimateYnZn} hold. Under (H1'), (H2') and (H3), there exists a constant $C$ independent of $n$, such that
\begin{align*}
\E\left[\sup_{t\in[0,T]}|(\E[Y_t^n-L_t|\mathcal{G}_t])^-|^2 \right]\leq \frac{C}{n^2}, \ \E\left[\sup_{t\in[0,T]}|(\E[Y_t^n-U_t|\mathcal{G}_t])^+|^2 \right]\leq \frac{C}{n^2}.
\end{align*}
\end{lemma}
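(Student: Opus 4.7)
The plan is to first derive an integrated $L^2$-in-time bound of order $1/n$ by comparison with an It\^o reference process, and then to upgrade this to the stated $\mathcal{S}^2$-bound of order $1/n^2$ via an It\^o-Tanaka argument on $(\nu^n_t)^-$, where I write $\nu^n_t := \E[Y^n_t - L_t|\mathcal{G}_t]$ and analogously $\eta^n_t := \E[Y^n_t - U_t|\mathcal{G}_t]$.

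For the first step, I would pick an It\^o process $I$ lying between $L$ and $U$ in conditional expectation---for instance $I := (L+U)/2$, which by (H2') is itself an It\^o process with $\E[L_t|\mathcal{G}_t] \leq \E[I_t|\mathcal{G}_t] \leq \E[U_t|\mathcal{G}_t]$. Applying It\^o's formula to $|Y^n_t - I_t|^2$ and taking expectation, the cross-terms involving the penalties are controlled by the sign observations
\begin{align*}
\E[Y^n_s - I_s|\mathcal{G}_s]\,(\nu^n_s)^- \leq -((\nu^n_s)^-)^2, \qquad \E[Y^n_s - I_s|\mathcal{G}_s]\,(\eta^n_s)^+ \geq ((\eta^n_s)^+)^2,
\end{align*}
which follow from $\E[L_\cdot|\mathcal{G}_\cdot] \leq \E[I_\cdot|\mathcal{G}_\cdot] \leq \E[U_\cdot|\mathcal{G}_\cdot]$ combined with the definitions of $(\cdot)^\pm$. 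Together with the uniform estimate of Lemma~A.1 for $Y^n$ and the uniform bounds on $f,b^I,\sigma^I$ from (H1')-(H2'), this yields
\begin{align*}
2n\,\E\Bigl[\int_0^T \bigl\{((\nu^n_s)^-)^2 + ((\eta^n_s)^+)^2\bigr\} ds\Bigr] \leq C
\end{align*}
uniformly in $n$.

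For the second step, I would use (H2') to write the $\mathbb{G}$-semimartingale decomposition
\begin{align*}
d\nu^n_t = \bigl[-\bar{f}_t - \bar{b}^L_t - n(\nu^n_t)^- + n(\eta^n_t)^+\bigr] dt + d\mathcal{N}^n_t,
\end{align*}
where $\bar{f},\bar{b}^L$ are the $\mathbb{G}$-optional projections of $f(\cdot,Y^n,Z^n)$ and $b^L$, and $\mathcal{N}^n$ is a $\mathbb{G}$-local martingale gathering the contributions from $\E[K^{n,+}_T|\mathcal{G}_\cdot]-K^{n,+}_\cdot$, $\E[K^{n,-}_T|\mathcal{G}_\cdot]-K^{n,-}_\cdot$, and the $\mathbb{G}$-projections of the Brownian terms. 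A crucial observation is that $\E[L|\mathcal{G}]\leq\E[U|\mathcal{G}]$ implies the sets $\{\nu^n<0\}$ and $\{\eta^n>0\}$ are disjoint, so $(\nu^n_t)^-\,(\eta^n_t)^+ \equiv 0$ and the two penalties never interact in this decomposition. I would then apply It\^o-Tanaka to $((\nu^n_t)^-)^2$ on $[t,T]$, use the terminal condition $(\nu^n_T)^- = (\E[\xi-L_T|\mathcal{G}_T])^- = 0$ from (H3), take supremum in $t$, control the stochastic integral against $d\mathcal{N}^n$ via Burkholder-Davis-Gundy, and substitute the Step~1 bound on $\int_0^T ((\nu^n_s)^-)^2 ds$ to arrive at $\E[\sup_t ((\nu^n_t)^-)^2]\leq C/n^2$. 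The symmetric argument with $U$ replacing $L$ handles $(\eta^n)^+$.

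The main obstacle is rigorously establishing the $\mathbb{G}$-semimartingale decomposition of $\nu^n$, since $\mathbb{G}$ does not a priori support a martingale representation theorem. This is precisely where (H2') plays its role: the It\^o dynamics of $L,U$ make the drift parts of $\E[L|\mathcal{G}]$ and $\E[U|\mathcal{G}]$ absolutely continuous with densities given by the $\mathbb{G}$-optional projections of $b^L,b^U$, while (H1') ensures that the drift contribution coming from $f$ is likewise well controlled. A secondary subtlety is that $\E[\int_t^T Z^n_s dB_s|\mathcal{G}_t]$ is not a priori a $\mathbb{G}$-martingale (unless $\mathbb{G}$ is immersed in $\mathbb{F}$); this is circumvented by working instead with the explicit $\mathbb{G}$-martingales $\E[K^{n,\pm}_T|\mathcal{G}_\cdot]-K^{n,\pm}_\cdot$, which exist by $\mathbb{G}$-adaptedness of $K^{n,\pm}$, and absorbing the residual martingale contributions into $\mathcal{N}^n$.
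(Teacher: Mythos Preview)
Your Step~1 energy estimate is fine, but Step~2 has a genuine rate gap: the It\^o--Tanaka argument on $((\nu^n_t)^-)^2$ yields at best $\E[\sup_t((\nu^n_t)^-)^2]\le C/n$, not the stated $C/n^2$. Even in the clean case $\mathbb{G}=\mathbb{F}$ (where your semimartingale decomposition is unproblematic), the identity reads
\[
((\nu^n_t)^-)^2+2n\int_t^T((\nu^n_s)^-)^2\,ds+\int_t^T\mathbf{1}_{\{\nu^n_s\le0\}}\,d\langle\mathcal{N}^n\rangle_s
=2\int_t^T(\nu^n_s)^-h^n_s\,ds+\int_t^T 2(\nu^n_s)^-\,d\mathcal{N}^n_s,
\]
with $h^n=-\bar f-\bar b^L$ uniformly in $\mathcal{H}^2$. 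Taking $t=0$ and expectations gives $\E\int_0^T((\nu^n_s)^-)^2ds\le C/n^2$ and $\E\int_0^T\mathbf{1}_{\{\nu^n_s\le0\}}d\langle\mathcal{N}^n\rangle_s\le C/n$, but after BDG and the usual Young splitting the stochastic integral contributes precisely this $O(1/n)$ quadratic-variation bound to the supremum estimate, so you are left with $\E[\sup_t((\nu^n_t)^-)^2]\le C/n$. That is insufficient: the downstream corollary needs $n^2\E[\sup_t((\nu^n_t)^-)^2]\le C$ to bound $\E[(K^{n,+}_T)^2]$. A second obstacle is that, for a general subfiltration $\mathbb{G}$, the optional projection of an $\mathbb{F}$-semimartingale need not be a $\mathbb{G}$-semimartingale; your proposed workaround via $\E[K^{n,\pm}_T|\mathcal{G}_\cdot]-K^{n,\pm}_\cdot$ does not touch the problematic pieces $\E[\int_0^t(f+b^L)\,ds\,|\,\mathcal{G}_t]$ and $\E[\int_0^t(Z^n_s-\sigma^L_s)\,dB_s\,|\,\mathcal{G}_t]$, whose $\mathbb{G}$-decomposition is exactly what is in doubt.

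The paper avoids both issues by never projecting the semimartingale. It applies It\^o's formula \emph{in $\mathbb{F}$} to $e^{-nt}(Y^n_t-L_t)$ --- a linear, exponentially weighted transformation rather than a quadratic one --- to obtain a Duhamel-type representation of $\widetilde Y^n_t:=Y^n_t-L_t$, and takes $\E[\,\cdot\,|\mathcal{G}_t]$ only at the very end. The stochastic integral then vanishes (tower through $\mathcal{F}_t$), the terminal term is nonnegative by (H3), and the combination $\widetilde Y^n_s+(\E[\widetilde Y^n_s|\mathcal{G}_s])^-$ becomes $(\E[\widetilde Y^n_s|\mathcal{G}_s])^+\ge0$ after conditioning. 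What remains is the \emph{pointwise} bound
\[
(\nu^n_t)^-\;\le\;\E\Bigl[\int_t^T e^{n(t-s)}\bigl(|b^L_s|+|f(s,0)|+\lambda|Y^n_s|\bigr)ds\,\Big|\,\mathcal{G}_t\Bigr]\;\le\;\frac{1}{n}\,\E\Bigl[\sup_{s}\bigl(|b^L_s|+|f(s,0)|+\lambda\E[|Y^n_s|\,|\mathcal{G}_s]\bigr)\,\Big|\,\mathcal{G}_t\Bigr],
\]
which squares to $1/n^2$; Doob's maximal inequality together with the $L^p$ estimate of the preceding lemma then controls the supremum. The exponential weight is the device that extracts the factor $1/n$ cleanly --- an energy method on $((\nu^n)^-)^2$ cannot recover it at the same strength.
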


\begin{proof}
  We only prove the first inequality since the second one can be proved similarly.  Without loss of generality, suppose that $f$ is independent of $z$ and 
    \begin{align*}
        \E\left[\sup_{t\in[0,T]}|f(t,0)|^2\right]<\infty.
    \end{align*}
    Set $\widetilde{Y}_t^n=Y^n_t-L_t$.
   Applying It\^{o}'s formula to $e^{-nt}\widetilde{Y}^n_t$, %and noting the fact that 
%\begin{align*}
 %   f(s,Y^n_s,Z^n_s)\geq f(s,0,Z^n_s)-\lambda|Y^n_s|\geq L-\lambda|Y^n_s|,
%\end{align*}
we obtain that 
\begin{equation}\label{eq1.63'}\begin{split}
\widetilde{Y}^n_t=&(\xi-L_T)e^{n(t-T)}+\int_t^T n e^{n(t-s)}\left[\widetilde{Y}^n_s+\left(\E\left[\widetilde{Y}^n_s\big|\mathcal{G}_s\right]\right)^-\right]ds\\
&+\int_t^T e^{n(t-s)}(f(s,Y^n_s)+b^L_s)ds-\int_t^T\widetilde{Z}^n_sdB_s\\
%\geq &(\xi-S_T)e^{n(t-T)}+\int_t^T n e^{n(t-s)}\left[\widetilde{Y}^n_s+\left(\E\left[\widetilde{Y}^n_s\big|\mathcal{G}_s\right]\right)^-\right]ds\\
%&+\int_t^T e^{n(t-s)}(f(s,0,Z^n_s)+b_s-\lambda|Y^n_s|)ds-\int_t^T\widetilde{Z}^n_sdB_s\\
\geq &(\xi-L_T)e^{n(t-T)}+\int_t^T n e^{n(t-s)}\left[\widetilde{Y}^n_s+\left(\E\left[\widetilde{Y}^n_s\big|\mathcal{G}_s\right]\right)^-\right]ds\\
&-\int_t^T e^{n(t-s)}(|b^L_s|+|f(s,0)|+\lambda|Y^n_s|)ds-\int_t^T\widetilde{Z}^n_sdB_s,
\end{split}\end{equation} 
where $\widetilde{Z}^n_t=Z^n_t-\sigma^L_t$. It is easy to check that  $\E[\xi-L_T|\mathcal{G}_t]=\E[\E[\xi-L_T|\mathcal{G}_T]|\mathcal{G}_t]\geq 0$ and  
\begin{align*}
 &\E\left[\int_t^T n e^{n(t-s)}\left[\widetilde{Y}^n_s+\left(\E\left[\widetilde{Y}^n_s\big|\mathcal{G}_s\right]\right)^-\right]ds\Big|\mathcal{G}_t\right]\\   
 =&\int_t^T n e^{n(t-s)}\E\left[\widetilde{Y}^n_s+\left(\E\left[\widetilde{Y}^n_s\big|\mathcal{G}_s\right]\right)^-\Big|\mathcal{G}_t\right]ds\\
 =&\int_t^T n e^{n(t-s)}\E\left[\E\left[\widetilde{Y}^n_s\big|\mathcal{G}_s\right]+\left(\E\left[\widetilde{Y}^n_s\big|\mathcal{G}_s\right]\right)^-\Big|\mathcal{G}_t\right]ds\geq 0.
\end{align*} 
Taking conditional expectation w.r.t. $\mathcal{G}_t$ on both sides of  \eqref{eq1.63'} yields  that 
\begin{equation*}\begin{split}%\label{eq1.63}
\E\left[\widetilde{Y}^n_t\big|\mathcal{G}_t\right]
\geq 
-\E\left[\int_t^T e^{n(t-s)}(|b^L_s|+|f(s,0)|+\lambda|Y^n_s|)ds\Big|\mathcal{G}_t\right],
\end{split}\end{equation*} 
Consequently, we have
\begin{align*}
    \left(\E\left[\widetilde{Y}^n_t\big|\mathcal{G}_t\right]\right)^-
&\leq \E\left[\int_t^T e^{n(t-s)}(|b^L_s|+|f(s,0)|+\lambda|Y^n_s|)ds\Big|\mathcal{G}_t\right]\\
%&= \E\left[\int_t^T e^{n(t-s)}(|b_s^L|+|f(s,0)|)ds\Big|\mathcal{G}_t\right]+\lambda\int_t^Te^{n(t-s)}\E\Big[\E\left[|Y^n_s||\mathcal{G}_s\right]\Big|\mathcal{G}_t\Big]ds\\
&= \E\left[\int_t^T e^{n(t-s)}\left(|b^L_s|+|f(s,0)|+\lambda\E\left[|Y^n_s||\mathcal{G}_s\right]\right)ds\Big|\mathcal{G}_t\right]\\
&\leq \frac{1}{n}\E\left[\sup_{s\in[0,T]}(|b^L_s|+|f(s,0)|+\lambda\E[|Y^n_s||\mathcal{G}_s])\Big|\mathcal{G}_t\right].
\end{align*}
Applying Eq. \eqref{estimate2} and Doob's maximal inequality, we obtain the desired result.
\end{proof}

%Now, we are ready to show that $\{Y^n\}_{n\in\mathbb{N}}$ is uniformly bounded under the $\mathcal{S}^2$-norm.
\begin{corollary}\label{cor1}
    Under the same assumption as in Lemma \ref{estimateYn-S}, there exists a constant $C$ independent of $n$, such that
    \begin{align*}
      \E\left[|K^{n,+}_T|^2\right]\leq C, \ \E\left[|K^{n,-}_T|^2\right]\leq C \textrm{ and }  \E\left[\sup_{t\in[0,T]}|Y^n_t|^2\right]\leq C.
    \end{align*}
\end{corollary}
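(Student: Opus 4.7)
The two estimates on $K^{n,\pm}_T$ follow almost immediately from Lemma \ref{estimateYn-S}. Recall that
$$K^{n,+}_T = \int_0^T n(\E[Y^n_s-L_s|\mathcal{G}_s])^-\,ds, \qquad K^{n,-}_T = \int_0^T n(\E[Y^n_s-U_s|\mathcal{G}_s])^+\,ds.$$
The first step is to pull the factor $n$ outside and apply the trivial pointwise bound
$$K^{n,+}_T \leq nT\,\sup_{s\in[0,T]}(\E[Y^n_s-L_s|\mathcal{G}_s])^-, \qquad K^{n,-}_T \leq nT\,\sup_{s\in[0,T]}(\E[Y^n_s-U_s|\mathcal{G}_s])^+.$$
Squaring, taking expectations, and invoking Lemma \ref{estimateYn-S} kills the factor of $n^2$ and yields $\E[|K^{n,+}_T|^2]+\E[|K^{n,-}_T|^2]\leq C$ uniformly in $n$.

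For the third estimate, the plan is to represent $Y^n$ as a conditional expectation. Taking $\mathcal{F}_t$-conditional expectations in \eqref{panelization} gives
$$Y^n_t = \E\left[\xi + \int_t^T f(s,Y^n_s,Z^n_s)\,ds + (K^{n,+}_T - K^{n,+}_t) - (K^{n,-}_T - K^{n,-}_t)\Big|\mathcal{F}_t\right],$$
so that
$$|Y^n_t| \leq \E\left[|\xi| + \int_0^T |f(s,Y^n_s,Z^n_s)|\,ds + K^{n,+}_T + K^{n,-}_T\Big|\mathcal{F}_t\right].$$
Applying Doob's $L^2$ maximal inequality to the right-hand side, expanding the square, and using the Lipschitz property of $f$ to bound $|f(s,Y^n_s,Z^n_s)|^2 \leq C(|f(s,0,0)|^2+|Y^n_s|^2+|Z^n_s|^2)$, one obtains
$$\E\left[\sup_{t\in[0,T]}|Y^n_t|^2\right] \leq C\left(\E[|\xi|^2] + \E\!\left[\int_0^T|f(s,0,0)|^2ds\right] + \int_0^T\E[|Y^n_s|^2]\,ds + \E\!\left[\int_0^T|Z^n_s|^2ds\right] + \E[|K^{n,+}_T|^2] + \E[|K^{n,-}_T|^2]\right).$$
By estimate \eqref{estimate1} of Lemma \ref{estimateYnZn} the integral terms in $Y^n$ and $Z^n$ are uniformly bounded, and by the first part of the present proof the $K^{n,\pm}_T$ terms are also controlled. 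This yields the desired uniform $\mathcal{S}^2$-bound on $\{Y^n\}$.

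There is no real obstacle: the delicate analysis was already done in Lemma \ref{estimateYn-S}, whose $O(1/n)$ decay rate on the violations of the obstacles is precisely tuned to cancel the prefactor $n$ appearing in $K^{n,\pm}_T$. The corollary is a clean consequence of that rate, combined with a standard BSDE-to-martingale-representation argument for the sup estimate on $Y^n$.
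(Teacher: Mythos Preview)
Your proof is correct and follows essentially the same line as the paper. The paper also notes that the bounds on $K^{n,\pm}_T$ are direct consequences of Lemma \ref{estimateYn-S}, and for the $\mathcal{S}^2$-estimate on $Y^n$ it writes $Y^n$ via \eqref{panelization}, bounds each term, and closes with \eqref{estimate1} and the just-established bounds on $K^{n,\pm}_T$; the only cosmetic difference is that the paper keeps the stochastic integral and handles it with the BDG inequality, whereas you first take $\mathcal{F}_t$-conditional expectations and use Doob's maximal inequality, which is an equivalent standard route.
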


\begin{proof}
   The first two estimates are the direct consequence of Lemma \ref{estimateYn-S}. %Applying Eq. \eqref{baryn} and recalling the definition for $\bar{Y}^n$, we have 
   % \begin{align*}
   %     \sup_{t\in[0,T]}\E\left[|Y^n_t|^2\right]\leq C.
   % \end{align*}
    By Eq. \eqref{panelization}, the H\"{o}lder inequality and the BDG inequality, we have 
    \begin{align*}
        \E\left[\sup_{t\in[0,T]}|Y^n_t|^2\right]\leq &C\Bigg\{\E\left[|\xi|^2+|K^{n,+}_T|^2+|K^{n,-}_T|^2\right]+\sup_{t\in[0,T]}\E[|Y^n_t|^2]\\
        &+\E\left[\int_0^T|Z^n_t|^2dt+\int_0^T|f(t,0,0)|^2dt\right]\Bigg\}.
    \end{align*}
    By Eq. \eqref{estimate1} and the uniform estimate for $K^{n,+}$, $K^{n,-}$, we obtain the last estimate.
\end{proof}

%Now, we are in a position to show that the sequences $\{Y^n\}_{n\in\mathbb{N}}$, $\{Z^n\}_{n\in\mathbb{N}}$ and $\{K^n\}_{n\in\mathbb{N}}$ are Cauchy sequences in $\mathcal{S}^2$, $\mathcal{H}^2$ and $\mathcal{S}^2$, respectively.

Now, we state the main result in this section.
\begin{theorem}\label{main2}
Under the same assumption as in Lemma \ref{estimateYn-S}, the BSDE with conditional reflection \eqref{nonlinearyz} has a unique solution $(Y,Z,K)$. Furthermore, $(Y,Z,K)$ is the limit of $(Y^n,Z^n,K^n)$.
\end{theorem}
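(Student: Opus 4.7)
The plan is to show that the penalization sequence $(Y^n,Z^n,K^n)$ is Cauchy in $\mathcal{S}^2\times\mathcal{H}^2\times\mathcal{S}^2$, to identify the limit $(Y,Z,K)$, and then to extract a bounded-variation decomposition of $K$ satisfying the Skorokhod constraints. For indices $m,n$ I would apply It\^{o}'s formula to $e^{\beta t}|Y^n_t-Y^m_t|^2$, with $\beta$ chosen large enough to absorb the Lipschitz contributions from $f$. The only delicate contribution is the cross-term $2\int_0^T (Y^n_s-Y^m_s)\,d(K^n_s-K^m_s)$; since every $K^{n,\pm}$ is $\mathbb{G}$-adapted, this term equals $2\int_0^T\E[Y^n_s-Y^m_s|\mathcal{G}_s]\,d(K^n_s-K^m_s)$ in expectation. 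Writing $\E[Y^n-Y^m|\mathcal{G}]=\E[Y^n-L|\mathcal{G}]-\E[Y^m-L|\mathcal{G}]$ against $dK^{\cdot,+}$ and analogously with $U$ against $dK^{\cdot,-}$, the ``diagonal'' integrals $\int\E[Y^n-L|\mathcal{G}]\,dK^{n,+}$ and $-\int\E[Y^n-U|\mathcal{G}]\,dK^{n,-}$ are non-positive by the very definition of $K^{n,\pm}$, while each of the four ``off-diagonal'' pieces is estimated by Cauchy--Schwarz, for instance
\begin{align*}
\Bigl|\E\!\int_0^T\E[Y^m_s-L_s|\mathcal{G}_s]\,dK^{n,+}_s\Bigr|
\le \sqrt{\E\sup_{s}|(\E[Y^m_s-L_s|\mathcal{G}_s])^-|^2}\sqrt{\E|K^{n,+}_T|^2}\le\tfrac{C}{m},
\end{align*}
thanks to Lemma~\ref{estimateYn-S} and Corollary~\ref{cor1}. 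Combining this with the Burkholder--Davis--Gundy inequality applied to the martingale part of $Y^n-Y^m$ delivers the explicit rate $\E[\sup_t|Y^n_t-Y^m_t|^2+\int_0^T|Z^n_s-Z^m_s|^2\,ds]\le C/\min(m,n)$, so $(Y^n,Z^n)$ is Cauchy in $\mathcal{S}^2\times\mathcal{H}^2$.

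Next, denoting the limit by $(Y,Z)\in\mathcal{S}^2\times\mathcal{H}^2$ and defining $K_t:=Y_0-Y_t-\int_0^t f(s,Y_s,Z_s)\,ds+\int_0^t Z_s\,dB_s$, the Cauchy property combined with BDG immediately gives $K^n\to K$ in $\mathcal{S}^2$. Passing to the limit in the two estimates of Lemma~\ref{estimateYn-S} yields the barrier inclusion $\E[L_t|\mathcal{G}_t]\le\E[Y_t|\mathcal{G}_t]\le\E[U_t|\mathcal{G}_t]$ for every $t$. To decompose $K=K^+-K^-$ I would invoke Proposition~\ref{repK}: for each fixed $\omega$, $\E[Y|\mathcal{G}]$ solves the one-dimensional Skorokhod problem on the time-dependent interval $[\E[L|\mathcal{G}],\E[U|\mathcal{G}]]$ driven by the $\mathbb{G}$-adapted input built from $\xi$, $\int f\,ds$ and $\int Z\,dB$; the deterministic theory of \cite{BKR,Slaby} then supplies nondecreasing $K^+,K^-$ satisfying the flat-off conditions $\int_0^T\E[Y-L|\mathcal{G}]\,dK^+=\int_0^T\E[U-Y|\mathcal{G}]\,dK^-=0$. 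The $\mathbb{G}$-adaptedness of $K^\pm$ transfers from that of the inputs, and the uniqueness statement of Theorem~\ref{uniqueness} reconciles this decomposition with the limit of the penalization, completing the proof.

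The hard part will be the cross-term estimate in the Cauchy argument: as highlighted in the remark preceding Lemma~\ref{estimateYnZn}, $K^n$ is no longer monotone in $n$, so the monotone-convergence strategy of the single-reflection case \cite{Li} is unavailable. One must instead exploit the quantitative $O(1/n)$ rate of Lemma~\ref{estimateYn-S} together with the uniform $L^2$-bound on $K^{n,\pm}_T$ to close the four off-diagonal pieces at the decisive rate $1/\min(m,n)$, and the careful sign bookkeeping of these four combinations is where the bulk of the technical effort goes.
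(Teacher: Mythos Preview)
Your Cauchy argument for $(Y^n,Z^n,K^n)$ is essentially the paper's Steps~1--2: the sign analysis of the four cross terms is correct, and combining Lemma~\ref{estimateYn-S} with Corollary~\ref{cor1} does give the rate $C/\min(m,n)$. The passage to $\mathcal{S}^2$ convergence of $Y^n$ and then of $K^n$ is also the same as in the paper.

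The gap is in your final step, where you try to obtain the decomposition $K=K^+-K^-$ with the Skorokhod conditions. Invoking Proposition~\ref{repK} is circular: that proposition \emph{assumes} $(Y,Z,K)$ is a solution of \eqref{nonlinearyz}, including the flat-off conditions you are trying to prove. If instead you run the deterministic Skorokhod problem of \cite{BKR,Slaby} on the input $x$ pathwise, you obtain some $(\bar y,\bar K)$ with $\bar y=x+\bar K\in[l,u]$ and flat-off, but nothing forces $\bar K$ to coincide with the limit $K$: the inclusion $\E[Y|\mathcal G]\in[l,u]$ together with $\E[Y|\mathcal G]=x+K$ does \emph{not} determine $K$ uniquely unless you already know the flat-off conditions. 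Theorem~\ref{uniqueness} cannot reconcile the two either, since $(Y,Z,K)$ has not yet been shown to be a solution. A related issue is that the $\mathcal{S}^2$ limit $K$ is a priori only continuous and $\mathbb{G}$-adapted; its bounded-variation property is not inherited from $K^n=K^{n,+}-K^{n,-}$ without controlling $K^{n,+}$ and $K^{n,-}$ separately.

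The paper's route (the one it says is ``similar with Section~6 in \cite{EKPPQ}'') is precisely to exploit the uniform $L^2$ bounds on $K^{n,+}_T$ and $K^{n,-}_T$ from Corollary~\ref{cor1} to extract separate limits $K^+$, $K^-$, and then to pass to the limit in the penalty structure: since $dK^{n,+}_s=n(\E[Y^n_s-L_s|\mathcal G_s])^-\,ds$ is supported on $\{\E[Y^n-L|\mathcal G]\le 0\}$, and Lemma~\ref{estimateYn-S} forces this set to collapse onto $\{\E[Y-L|\mathcal G]=0\}$, one recovers $\int_0^T\E[Y_t-L_t|\mathcal G_t]\,dK^+_t=0$ in the limit (and symmetrically for $K^-$). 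This is the missing ingredient.
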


%\begin{lemma}\label{limit}
%    Suppose that (H1), (H1'), (H2') and (H3) hold. Then, we have 
   
%\end{lemma}

\begin{proof}
   We only prove that the limit of $(Y^n,Z^n,K^n)$ is the solution to the BSDE with conditional reflection \eqref{nonlinearyz}. For simplicity, we write $Y^{m,n}_t=Y^m_t-Y^n_t$, $Z^{m,n}_t=Z^m_t-Z^n_t$, $K^{m,n}_t=K^m_t-K^n_t$, $K^{m,n,+}_t=K^{m,+}_t-K^{n,+}_t$, $K^{m,n,-}_t=K^{m,-}_t-K^{n,-}_t$ and $f^{m,n}_t=f(t,Y^m_t,Z^m_t)-f(t,Y^n_t,Z^n_t)$. 
    
  \textbf{Step 1.} We first show that  
  \begin{equation}\label{eq1.65}
    \lim_{m,n\rightarrow \infty} \sup_{t\in[0,T]}\E\left[|Y^{m,n}_t|^2\right]=0, \ \lim_{m,n\rightarrow\infty}\E\left[\int_0^T |Z^{m,n}_s|^2 ds\right]=0.
\end{equation}
 To this end,   applying It\^{o}'s formula to $e^{\beta t}|Y^{m,n}_t|^2$, we have
    \begin{equation}\label{eq1.64}\begin{split}
        &e^{\beta t}|{Y}^{m,n}_t|^2+\int_t^T \beta e^{\beta s}|{Y}^{m,n}_s|^2 ds+\int_t^T e^{\beta s}|Z_s^{m,n}|^2 ds\\
=&\int_t^T 2e^{\beta s}{Y}^{m,n}_sf^{m,n}_sds-\int_t^T 2e^{\beta s}{Y}_s^{m,n} Z_s^{m,n}dB_s\\
&+\int_t^T 2e^{\beta s}{Y}_s^{m,n}dK^{m,n,+}_s-\int_t^T 2e^{\beta s}{Y}_s^{m,n}dK^{m,n,-}_s\\
\leq &2(\lambda+\lambda^2)\int_t^T e^{\beta s}|Y^{m,n}_s|^2ds-\int_t^T 2e^{\beta s}{Y}_s^{m,n} Z_s^{m,n}dB_s \\
&+\frac{1}{2}\int_t^T e^{\beta s}|Z^{m,n}|^2 ds+\int_t^T 2e^{\beta s}{Y}_s^{m,n}dK^{m,n,+}_s-\int_t^T 2e^{\beta s}{Y}_s^{m,n}dK^{m,n,-}_s.
    \end{split}\end{equation}
Simple calculation yields that for any $u\in[0,t]$, 
\begin{equation*}\begin{split}%\label{ymnkmn}
    &\E\left[\int_t^T e^{\beta s}{Y}_s^{m,n}dK^{m,n,+}_s\Big|\mathcal{G}_u\right]\\
    =&\int_t^T e^{\beta s}\E\left[\left(\widetilde{Y}^m_s-\widetilde{Y}^n_s\right)\left(m\left(\E\left[\widetilde{Y}^m_s\big|\mathcal{G}_s\right]\right)^--n\left(\E\left[\widetilde{Y}^n_s\big|\mathcal{G}_s\right]\right)^-\right)\Big|\mathcal{G}_u\right]ds\\
    =&\int_t^T e^{\beta s}\E\left[\left(\E\left[\widetilde{Y}^m_s\big|\mathcal{G}_s\right]-\E\left[\widetilde{Y}^n_s\big|\mathcal{G}_s\right]\right)\left(m\left(\E\left[\widetilde{Y}^m_s\big|\mathcal{G}_s\right]\right)^--n\left(\E\left[\widetilde{Y}^n_s\big|\mathcal{G}_s\right]\right)^-\right)\Big|\mathcal{G}_u\right]ds\\
    \leq &(m+n)\E\left[\int_t^T e^{\beta s}\left(\E\left[\widetilde{Y}^m_s\big|\mathcal{G}_s\right]\right)^-\left(\E\left[\widetilde{Y}^n_s\big|\mathcal{G}_s\right]\right)^- \Big|\mathcal{G}_u\right]\\
    \leq &e^{\beta T}\E\left[\sup_{s\in[0,T]} \left(\E\left[\widetilde{Y}^n_s\big|\mathcal{G}_s\right]\right)^-K^{m,+}_T+\sup_{s\in[0,T]} \left(\E\left[\widetilde{Y}^m_s\big|\mathcal{G}_s\right]\right)^-K^{n,+}_T\Big|\mathcal{G}_u\right],
    \end{split}
\end{equation*}
where $\widetilde{Y}_t^i=Y^i_t-L_t$ for $i=m,n$. Similarly, we have
\begin{equation*}
    \begin{split}
        &-\E\left[\int_t^T e^{\beta s}{Y}_s^{m,n}dK^{m,n,-}_s\Big|\mathcal{G}_u\right]\\
        \leq &e^{\beta T}\E\left[\sup_{s\in[0,T]} \left(\E\left[\overline{Y}^n_s\big|\mathcal{G}_s\right]\right)^+K^{m,-}_T+\sup_{s\in[0,T]} \left(\E\left[\overline{Y}^m_s\big|\mathcal{G}_s\right]\right)^+K^{n,-}_T\Big|\mathcal{G}_u\right],
    \end{split}
\end{equation*}
where $\overline{Y}^i_t=Y^i_t-U_t$ for $i=m,n$. 
In the following of the proof, we choose $\beta=2(\lambda+\lambda^2)$.  Taking expectations on both sides of \eqref{eq1.64}, all the above analysis indicates that for any $t\in[0,T]$, %first and then taking expectations, we obtain that for any $t\in[0,T]$, 
\begin{align*}
    &\E\left[|Y^{m,n}_t|^2\right]+\E\left[\int_t^T |Z^{m,n}_s|^2 ds\right]\\
    \leq &C\E\left[\sup_{s\in[0,T]} \left(\E\left[\widetilde{Y}^n_s\big|\mathcal{G}_s\right]\right)^-K^{m,+}_T+\sup_{s\in[0,T]} \left(\E\left[\widetilde{Y}^m_s\big|\mathcal{G}_s\right]\right)^-K^{n,+}_T\right]\\
    &+C\E\left[\sup_{s\in[0,T]} \left(\E\left[\overline{Y}^n_s\big|\mathcal{G}_s\right]\right)^+K^{m,-}_T+\sup_{s\in[0,T]} \left(\E\left[\overline{Y}^m_s\big|\mathcal{G}_s\right]\right)^+K^{n,-}_T\right].
   % \leq &C\left(\E\left[\sup_{s\in[0,T]} \Big|\left(\E\left[\widetilde{Y}^n_s\big|\mathcal{G}_s\right]\right)^-\Big|^2\right]\right)^{1/2}\left(\E\left[|K^m_T|^2\right]\right)^{1/2}\\ &+C\left(\E\left[\sup_{s\in[0,T]} \Big|\left(\E\left[\widetilde{Y}^m_s\big|\mathcal{G}_s\right]\right)^-\Big|^2\right]\right)^{1/2}\left(\E\left[|K^n_T|^2\right]\right)^{1/2}.
\end{align*}
Then, \eqref{eq1.65} follows from the H\"{o}lder inequality, Lemma \ref{estimateYn-S} and Corollary \ref{cor1}.% and Eq. \eqref{kn+kn-}.   

\textbf{Step 2.} We show that 
\begin{align*}
        \lim_{m,n\rightarrow \infty}\E\left[\sup_{t\in[0,T]}|Y^{m,n}_t|^2\right]=0, \ \lim_{m,n\rightarrow \infty}\E\left[\sup_{t\in[0,T]}|K^{m,n}_t|^2\right]=0.
    \end{align*} 
Recalling \eqref{eq1.64} and $\beta=2(\lambda+\lambda^2)$, it is easy to check that %  and $\beta=2(\lambda+\lambda^2)$, we have 
\begin{equation}\label{supymn}\begin{split}
    \sup_{t\in[0,T]}|Y^{m,n}_t|^2\leq &C\Bigg(\sup_{t\in[0,T]}\Big|\int_t^T {Y}_s^{m,n} Z_s^{m,n}dB_s\Big|+\int_0^T|Y^{m,n}_s|d(K^{m,+}_s+K^{n,+}_s+K^{m,-}_s+K^{n,-}_s)\Bigg)\\
    \leq &C\Bigg(\int_0^T |{Y}_s^{m,n}|ds \left[\sup_{s\in[0,T]}\left(m\left(\E\left[\widetilde{Y}^m_s\big|\mathcal{G}_s\right]\right)^-+n\left(\E\left[\widetilde{Y}^n_s\big|\mathcal{G}_s\right]\right)^-\right)\right]\\
    &+\int_0^T |{Y}_s^{m,n}|ds \left[\sup_{s\in[0,T]}\left(m\left(\E\left[\overline{Y}^m_s\big|\mathcal{G}_s\right]\right)^-+n\left(\E\left[\overline{Y}^n_s\big|\mathcal{G}_s\right]\right)^-\right)\right]\\
    &\ \ \ \ \ \ +\sup_{t\in[0,T]}\Big|\int_t^T {Y}_s^{m,n} Z_s^{m,n}dB_s\Big|\Bigg).
\end{split}\end{equation}
It follows from Lemma \ref{estimateYn-S} and the H\"{o}lder inequality that, for $i=m,n$,
\begin{equation}\label{ymn}\begin{split}
&\E\left[\sup_{s\in[0,T]}i\left(\E\left[\widetilde{Y}^i_s\big|\mathcal{G}_s\right]\right)^-\int_0^T |{Y}_s^{m,n}|ds \right]\leq  C\left(\E\left[\int_0^T |{Y}_s^{m,n}|^2ds\right]\right)^{1/2},\\
%&\E\left[\sup_{s\in[0,T]}n\left(\E\left[\widetilde{Y}^n_s\big|\mathcal{G}_s\right]\right)^-\int_0^T |{Y}_s^{m,n}|ds \right]\leq  C\left(\E\left[\int_0^T |{Y}_s^{m,n}|^2ds\right]\right)^{1/2},\\
%&\E\left[\sup_{s\in[0,T]}m\left(\E\left[\overline{Y}^m_s\big|\mathcal{G}_s\right]\right)^+\int_0^T |{Y}_s^{m,n}|ds \right]\leq  C\left(\E\left[\int_0^T |{Y}_s^{m,n}|^2ds\right]\right)^{1/2},\\
&\E\left[\sup_{s\in[0,T]}i\left(\E\left[\overline{Y}^i_s\big|\mathcal{G}_s\right]\right)^+\int_0^T |{Y}_s^{m,n}|ds \right]\leq  C\left(\E\left[\int_0^T |{Y}_s^{m,n}|^2ds\right]\right)^{1/2}.
\end{split}\end{equation}
Applying the BDG inequality, we have 
\begin{equation}\begin{split}\label{eq1.65'}
    \E\left[\sup_{t\in[0,T]}\Big|\int_t^T {Y}_s^{m,n} Z_s^{m,n}dB_s\Big|\right]\leq& C\E\left[\left(\int_0^T |Y^{m,n}_s Z^{m,n}_s|^2ds\right)^{1/2}\right]\\
    \leq &C\E\left[\sup_{t\in[0,T]}|Y^{m,n}_t|\left(\int_0^T | Z^{m,n}_s|^2ds\right)^{1/2}\right]\\
    \leq& C\left(\E\left[\sup_{t\in[0,T]}|Y^{m,n}_t|^2\right]\right)^{1/2}\left(\E\left[\int_0^T | Z^{m,n}_s|^2ds\right]\right)^{1/2}\\
    \leq &\varepsilon \E\left[\sup_{t\in[0,T]}|Y^{m,n}_t|^2\right]+C\E\left[\int_0^T | Z^{m,n}_s|^2ds\right].
    \end{split}
\end{equation}
Choosing $\varepsilon<1$, combining Eqs. \eqref{supymn}-\eqref{eq1.65'} indicates that 
\begin{align*}
    \E\left[\sup_{t\in[0,T]}|Y^{m,n}_t|^2\right]\leq C\left(\left(\E\left[\int_0^T |{Y}_s^{m,n}|^2ds\right]\right)^{1/2}+\E\left[\int_0^T | Z^{m,n}_s|^2ds\right]\right).
\end{align*}
It follows from \eqref{eq1.65} that $\{Y^n\}_{n\in\mathbb{N}}$ is a Cauchy sequenc in $\mathcal{S}^2$. 
%\begin{align*}
%    \lim_{m,n\rightarrow \infty}\E\left[\sup_{t\in[0,T]}|Y^m_t-Y^n_t|^2\right]=0.
%\end{align*}
Finally, note that 
\begin{align*}
    K^{m,n}_t=Y^{m,n}_0-Y^{m,n}_t+\int_0^t Z^{m,n}_sdB_s-\int_0^t f^{m,n}_sds.
\end{align*}
Simple calculation yields that 
\begin{align*}
    \lim_{m,n\rightarrow \infty}\E\left[\sup_{t\in[0,T]}|K^m_t-K^n_t|^2\right]=0.
\end{align*}

\textbf{Step 3.} Let $(Y,Z,K)$ be a triple of processes, such that 
 \begin{align*}
        &\lim_{n\rightarrow \infty}\E\left[\sup_{t\in[0,T]}|Y_t-Y^n_t|^2\right]=0,\\ &\lim_{n\rightarrow \infty}\E\left[\int_0^T|Z_t-Z^n_t|^2dt\right]=0, \\ &\lim_{n\rightarrow \infty}\E\left[\sup_{t\in[0,T]}|K_t-K^n_t|^2\right]=0.
    \end{align*}
By Lemma \ref{estimateYn-S}, we have    
\begin{align*}
\E\left[\sup_{t\in[0,T]}|(\E[Y_t-L_t|\mathcal{G}_t])^-|^2 \right]=0, \  \E\left[\sup_{t\in[0,T]}|(\E[Y_t-U_t|\mathcal{G}_t])^+|^2 \right]=0,
\end{align*}
which implies that $\E[L_t|\mathcal{G}_t]\leq \E[Y_t|\mathcal{G}_t]\leq \E[U_t|\mathcal{G}_t]$, for any $t\in[0,T]$. It remains to prove the Skorokhod condition holds, which is similar with the one for the classical doubly reflected case (see Section 6 in \cite{EKPPQ}). So we omit it. The proof is complete. 
\end{proof}

\begin{remark}
    Compared to Theorem \ref{thm2.7}, the assumption made for filtration $\mathbb{G}$ is weaker in Theorem \ref{main2}.
\end{remark}

 \end{document}